\newcommand{\NEpath}[4]{
    \fill[white!25]  (#1) rectangle +(#2,#3);
    \fill[fill=white]
    (#1)
    \foreach \dir in {#4}{
        \ifnum\dir=0
        -- ++(1,0)
        \else
        -- ++(0,1)
        \fi
    } |- (#1);
    \draw[help lines] (#1) grid +(#2,#3);
    \draw[dashed] (#1) -- +(#3,#3);
    \coordinate (prev) at (#1);
    \foreach \dir in {#4}{
        \ifnum\dir=0
        \coordinate (dep) at (1,0);
        \else
        \coordinate (dep) at (0,1);
        \fi
        \draw[-,line width=2pt,blue] (prev) -- ++(dep) coordinate (prev);
    };
}
\newtheorem{Theorem}{Theorem}[section]
\newtheorem{Corollary}{Corollary}[section]
\newtheorem{Lemma}{Lemma}[section]
\newtheorem{Proposition}{Proposition}[section]
\newtheorem{Remark}{Remark}[section]
\newtheorem{Definition}[equation]{Definition}
\newtheorem{Definition-Remark}[equation]{Definition/Remark}
\newtheorem{Example}[equation]{Example}
\newtheorem{Notation}[equation]{Notation}
\numberwithin{equation}{section}
\newcommand{\C}{\mathbb{C}}
\newcommand{\N}{\mathbb{N}}
\newcommand{\Z}{\mathbb{Z}}
\newcommand{\MC}{\mathit{M}\kern -0.1em\mathrm{1}}
\newcommand{\MCC}{\mathit{M}\kern -0.1em\mathrm{2}}
\begin{document}

\title{Nearly Toric Schubert Varieties of Type A}

\author[1]{Mahir Bilen Can\thanks{The corresponding author.}}
\author[2]{Nestor D\'iaz Morera}

\affil[1]{{\small Tulane University, New Orleans; mahirbilencan@gmail.com}}    
\affil[2]{{\small Tulane University, New Orleans; ndiazmorera@tulane.edu}}    
\normalsize

\date{\today}
\maketitle

\begin{abstract}
A notion of a nearly toric variety is introduced. The examples of nearly toric varieties in the context of Schubert varieties are discussed. In particular, combinatorial characterizations of the smooth and singular nearly toric Schubert varieties are found. Furthermore, the counts and generating series are determined. Additionally, a connection between Dyck paths and a certain family of nearly toric Schubert varieties is established.\\
\medskip

\noindent 
\textbf{Keywords:}  Dyck paths, pattern avoidance, partition Schubert varieties, spherical varieties, complexity one Schubert varieties.\\ 
\medskip

\noindent 
\textbf{MSC: 14M15, 14M27, 05A05} 
\end{abstract}

\section{Introduction}

Let $X$ be a $G$-variety, where $G$ is a connected reductive group. We fix a maximal torus $T$ and a Borel subgroup $B$ such that $T\subseteq B\subseteq G$. Let $c_T(X)$ (resp. $c_B(X)$) denote the minimum codimension of a $T$-orbit (resp. $B$-orbit) in $X$.  
In this article we consider a family of $G$-varieties that we call `nearly toric $G$-varieties,' characterized by the following three conditions: 
\begin{enumerate}
\item $X$ is a normal variety, 
\item $c_B(X) = 0$, and 
\item $c_T(X) = 1$.
\end{enumerate} 
The varieties possessing the first two properties are called spherical varieties, known for their remarkably rich geometric and representation-theoretic properties~\cite{Brion1993, VinbergKimelfeld}.
At the same time, varieties meeting the first and third conditions are interesting for their rich theory of divisors~\cite{AltmannPetersen, Timashev1997}.
In this article, we characterize the Schubert varieties satisfying all three of these properties.
We note that the first mentioned property is well-known: every Schubert variety is normal~\cite{Seshadri1984}.
Our purpose in this paper is twofold. First, we lay the combinatorial groundwork by clearly identifying the family of nearly toric Schubert varieties. Second, we establish both quantitative (enumerative) and qualitative (generators and relations) results concerning the singularities of these nearly toric varieties. 
We now proceed to explain our results in more detail.
\medskip

Hereafter, unless otherwise stated, $G$ will stand for the complex general linear group $\mathbf{GL}(n,\C)$, and $B$ 
will denote the Borel subgroup consisting of upper triangular matrices in $G$.
Finally, we will use the group of all diagonal matrices in $G$ as our maximal torus, $T$. 
Our examination reveals an intriguing relationship between the singularities of Schubert varieties and $c_T(X)$.
We found out that every singular Schubert variety $X$ with $c_T(X) = 1$ is a nearly toric variety. 
After making this discovery, it became clear to us that smooth and singular nearly toric Schubert varieties deserve distinct treatments, despite displaying interconnected features. For this reason, after establishing fundamental enumerative facts related to these two main families of nearly toric Schubert varieties, we shift our focus on a specific family of Schubert varieties. This particular family, indexed by the 312-avoiding permutations, exclusively comprises smooth Schubert varieties. We show that for these Schubert varieties, condition 3) implies condition 2). Hence, such Schubert varieties are nearly toric Schubert varieties as well.  
We now proceed to describe the results of this paper in more detail.
\medskip

Let $\mathbf{S}_n$ denote the symmetric group of $\{1,\dots, n\}$.
We identify $\mathbf{S}_n$ with the Weyl group of $G$.  
Let $Fl(n,\C)$ denote the variety of full flags of the vector space $\C^n$. 
In this notation, the first main result of our paper is as follows. 

\begin{Theorem}\label{intro:T1}
Let $X_w$, $w\in \mathbf{S}_n$, be a Schubert variety in the flag variety $Fl(n,\C)$. 
We assume that the $T$-complexity of $X_w$ is 1. 
Then the following statements hold: 
\begin{enumerate}
\item[(1)] $X_w$ is a singular nearly toric Schubert variety if and only if $w$ contains the pattern 3412 exactly once and avoids the pattern 321
(stated as Theorem~\ref{T:singulariff}).
\item[(2)] $X_w$ is a smooth nearly toric Schubert variety if and only if $w$ contains the 321 pattern exactly once and avoids 
both 3412  and 25314 patterns (stated as Theorem~\ref{T:smoothNRT}).
\end{enumerate}
\end{Theorem}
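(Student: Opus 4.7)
The plan is to combine three pattern-avoidance criteria: the Lakshmibai--Sandhya characterization of smooth Schubert varieties (those $X_w$ where $w$ avoids both $3412$ and $4231$), the known classification of toric Schubert varieties (those $X_w$ where $w$ avoids both $321$ and $3412$), and a suitable pattern criterion for sphericality. Since normality of $X_w$ is automatic by Seshadri's theorem, under the standing hypothesis $c_T(X_w)=1$ being \emph{nearly toric} is equivalent to $X_w$ being spherical, i.e., $c_B(X_w)=0$.

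My first step is to establish the following dichotomy: if $c_T(X_w)=1$, then exactly one of the following holds -- either (A) $w$ avoids $321$ and contains $3412$ exactly once, or (B) $w$ avoids $3412$ and contains $321$ exactly once. The key structural observation here is that the pattern $4231$ itself carries two distinct $321$-subwords (at positions $(1,2,4)$ and $(1,3,4)$), so a permutation containing $4231$ is ``doubly non-toric'' and must have $c_T(X_w)\geq 2$. This dichotomy interacts cleanly with Lakshmibai--Sandhya: a singular $X_w$ with $c_T(X_w)=1$ must land in case (A), while a smooth $X_w$ with $c_T(X_w)=1$ must land in case (B).

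For part (1), the $\Rightarrow$ direction reduces to the dichotomy above. For the $\Leftarrow$ direction, given $w$ that avoids $321$ and contains $3412$ exactly once, I would produce a dense $B$-orbit in $X_w$ by exploiting full commutativity of the ``$321$-avoiding portion'' of a reduced word for $w$ and controlling the single $3412$-resolution via an explicit Bott--Samelson-style parametrization; the singularity of $X_w$ is then forced by the presence of the $3412$ pattern via Lakshmibai--Sandhya.

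For part (2), the dichotomy shows that a smooth nearly toric $X_w$ with $c_T(X_w)=1$ is precisely one where $w$ avoids $3412$ and $4231$, contains $321$ exactly once, and satisfies $c_B(X_w)=0$. The substantive content of part (2) is therefore to identify $25314$ as the minimal pattern obstruction to sphericality within this family. I would prove necessity by showing that any smooth $w$ containing a $25314$-occurrence admits a one-parameter family of generic $B$-orbits inside $X_w$, forcing $c_B(X_w)\geq 1$; and sufficiency by a case analysis on the position and surrounding shape of the unique $321$-occurrence, using $25314$-avoidance to exhibit a dense $B$-orbit. The principal obstacle I foresee is exactly this last step: proving that the single pattern $25314$ captures the full sphericality obstruction in the smooth regime. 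The asymmetry with part~(1) -- where no analogous extra pattern is required -- should ultimately trace back to the fact that a $3412$ occurrence already produces enough toric directions in the normal cone of $X_w$ to guarantee sphericality automatically, whereas a $321$ occurrence is geometrically more rigid and requires the surrounding combinatorial environment, captured precisely by the absence of $25314$, to be controlled.
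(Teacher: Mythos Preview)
Your overall architecture is right—combine Lakshmibai--Sandhya, the Lee--Masuda--Park classification of $c_T=1$ Schubert varieties, and a sphericality criterion—but the execution diverges from the paper at the crucial step and leaves a genuine gap.

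The paper does \emph{not} construct dense $B$-orbits or one-parameter families of $B$-orbits. Instead it treats sphericality as a black box via Gaetz's explicit criterion: $X_w$ is spherical if and only if $w$ avoids a fixed list $\mathscr{P}$ of $21$ patterns in $\mathbf{S}_5$. With this in hand, both parts of the theorem become short pattern-elimination exercises. For part~(1), one checks that every pattern in $\mathscr{P}$ either contains a $321$ or contains $3412$ at least twice; hence any $w$ avoiding $321$ and containing $3412$ exactly once automatically avoids all of $\mathscr{P}$, so $X_w$ is spherical (and singular by Lakshmibai--Sandhya). For part~(2), one first observes that avoiding $3412$ and $4231$ already rules out all but seven of the $21$ patterns, and then that six of those seven contain $321$ at least twice; the sole survivor is $25314$. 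Your opening sentence even mentions ``a suitable pattern criterion for sphericality,'' but then you abandon it in favour of direct geometric constructions.

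Those direct constructions are the gap. Your proposal to ``produce a dense $B$-orbit via an explicit Bott--Samelson-style parametrization'' for part~(1), and to exhibit a one-parameter family of generic $B$-orbits whenever $25314$ occurs for part~(2), are not arguments but programmes; neither is straightforward, and you give no indication of how the single $3412$ (or single $321$) occurrence would be leveraged concretely. The closing heuristic about ``toric directions in the normal cone'' is not a mechanism. A secondary issue: your dichotomy argument is incomplete. Observing that $4231$ contains two $321$-subwords tells you that a smooth $w$ with exactly one $321$ avoids $4231$, but it does not by itself establish that $c_T(X_w)=1$ forces exactly one occurrence of exactly one of the forbidden patterns—that is precisely the content of Lee--Masuda--Park's Theorems~1.2 and~1.3, which the paper simply cites.
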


Lee, Masuda, and Park (\cite{LeeMasudaPark}) established permutation pattern criteria for $T$-complexity one Schubert varieties. However, they did not address spherical Schubert varieties within this class. A specific pattern avoidance criterion for identifying spherical Schubert varieties was only recently introduced (\cite{gao2021classification, Gaetz}). While the proof of Theorem~\ref{intro:T1} is relatively straightforward, it identifies all spherical Schubert varieties within the set of all $T$-complexity one Schubert varieties. Moreover, our theorem reveals an intriguing observation that we mentioned earlier: every singular $T$-complexity one Schubert variety is a spherical Schubert variety. We emphasize that the task of determining the nearly toric Schubert varieties in other types remains an open problem, both combinatorially and geometrically.
\medskip

Next, we find the counts for the two classes of nearly toric Schubert varieties. 

\begin{Theorem}\label{intro:T2}[stated as Theorem~\ref{T:rn} in the text]
Let $b_n$ denote the number of singular nearly toric Schubert varieties in $Fl(n,\C)$.
Let $r_n$ denote the number of smooth nearly toric Schubert varieties in $Fl(n,\C)$. 
If $F_m$ denotes the $m$-th Fibonacci number, then for every $n\geq 5$ we have 
\begin{enumerate}
\item[(1)] $b_n = \frac{2(2n-7)F_{2n-8} + (7n-23)F_{2n-7}}{5}$, and 
\item[(2)] $r_n = (n-2)F_{2n-4}$.
\end{enumerate}
\end{Theorem}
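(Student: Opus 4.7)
The plan is to reduce the enumeration of nearly toric Schubert varieties to the pattern-enumeration statements already established in Theorem~\ref{intro:T1}. That theorem tells us that a smooth nearly toric Schubert variety $X_w$ corresponds to a permutation $w\in \mathbf{S}_n$ which contains $321$ exactly once and avoids both $3412$ and $25314$, while a singular nearly toric Schubert variety corresponds to a permutation $w$ which contains $3412$ exactly once and avoids $321$. Consequently, $r_n$ and $b_n$ become purely combinatorial quantities about the symmetric group, and I would prove each part by a positional decomposition anchored at the unique occurrence of the distinguished pattern, followed by recognition of the resulting pieces as Fibonacci-enumerated objects.

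For the smooth count $r_n$, I would fix the three positions $i<j<k$ carrying the unique $321$ occurrence in $w$ and analyze the constraints imposed by the $3412$- and $25314$-avoidance together with the uniqueness of the triple. These restrictions force every entry outside the window $[i,k]$ and every other entry inside it to interact with the distinguished triple in only one of a few prescribed ways, so that $w$ decomposes into a left part and a right part, each $321$-avoiding and each subject to an additional local restriction. After reducing to counting such pieces of total length $n-3$, I would set up a short linear recursion whose solution is $F_{2n-4}$; the external factor $n-2$ then arises from summing over the $n-2$ possible choices for the middle value $w_j$ of the triple.

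For the singular count $b_n$, I would use one of the standard bijections between $321$-avoiding permutations in $\mathbf{S}_n$ and Dyck paths of semilength $n$. Under this bijection, the $3412$ occurrences in $w$ translate into a specific local configuration on the Dyck path, so that permutations containing $3412$ exactly once correspond to Dyck paths carrying this configuration exactly once. I would enumerate these by placing the distinguished configuration at a variable position and counting the two remaining subpaths, each of which must avoid the configuration. Since the Dyck paths avoiding this configuration admit a Fibonacci enumeration via a transfer-matrix argument, splitting the total length between the two subpaths produces a sum of Fibonacci-product terms, which a generating-function manipulation collapses into the closed form $\bigl(2(2n-7)F_{2n-8}+(7n-23)F_{2n-7}\bigr)/5$.

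The main technical obstacle I anticipate is in part~(1): the bookkeeping needed to ensure that each occurrence of $3412$ in $w$ is counted with multiplicity exactly one, so that the sum enumerates permutations containing exactly one $3412$ rather than at least one. This is typically handled either by inclusion--exclusion on the set of $3412$-patterns or by a careful marking of the distinguished occurrence, and it is precisely this step which converts a naive sum of Fibonacci products into the unusual formula with denominator five. As a sanity check I would verify both formulas against the small cases $n=5,6,7$ by direct enumeration before writing up the general argument.
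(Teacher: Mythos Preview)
Your plan is a genuine, from-scratch enumeration, whereas the paper's argument is almost entirely a reduction to known results.  The paper does not attack either count directly.  For part~(1) it simply quotes Daly's closed form
\[
a_n \;=\; \frac{2(2n-5)F_{2n-6} + (7n-16)F_{2n-5}}{5}
\]
for the set $\mathcal{A}_n$ of permutations containing $321$ exactly once and avoiding $3412$, and then invokes Daly's bijection $\mathcal{A}_{n-1}\leftrightarrow\mathcal{B}_n$ to conclude $b_n=a_{n-1}$; shifting $n\mapsto n-1$ in the displayed formula is the entire proof.  For part~(2) the paper uses the bijection $\Psi:\mathcal{M}_{n+1}\to\mathcal{B}_n$ constructed in Section~\ref{S:25314} (Theorem~\ref{T:RecurrenceML}), which together with Daly's bijection gives $|\mathcal{M}_n|=a_{n-2}$.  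Since $\mathcal{M}_n$ is exactly the set of smooth $T$-complexity-one permutations that are \emph{not} spherical, one gets $r_n=a_n-a_{n-2}$, and the closed form $(n-2)F_{2n-4}$ then drops out of a short Fibonacci-recurrence computation on Daly's formula.

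So the comparison is this: the paper's entire enumerative content is the single bijection $\Psi$ between $\mathcal{M}_{n+1}$ and $\mathcal{B}_n$; everything else is substitution into Daly's formula.  Your positional-decomposition approach for $r_n$ and your Dyck-path approach for $b_n$ would, if carried through, essentially re-derive Daly's enumeration of $\mathcal{A}_n$ and $\mathcal{B}_n$ from scratch, plus the $25314$-avoidance refinement.  That is plausible in outline, but your sketch is thin precisely at the places where the real work lies: the claim that the $3412$-avoidance plus $25314$-avoidance plus uniqueness of the $321$ triple cleanly factor $w$ into two independent $321$-avoiding pieces governed by a linear Fibonacci recursion needs a genuine structural lemma, and your ``$n-2$ choices for the middle value $w_j$'' is not obviously correct without that lemma.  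The paper sidesteps all of this by working at the level of reduced words rather than one-line notation, where the bijection $\Psi$ is a simple local surgery on the distinguished factor $s_{i+2}s_{i+1}s_{i-1}s_is_{i+1}$.
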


\medskip

Let $\mathbf{S}_n^{312}$ denote the set of all 312-avoiding permutations in $\mathbf{S}_n$. 
We call a Schubert variety $X_{w}$ such that $w\in \mathbf{S}_n^{312}$ a {\em partition Schubert variety}. 
These Schubert varieties are interesting for combinatorial as well as geometric reasons. 
They are rigid in the sense that their integral cohomology rings can be used for distinguishing amongst them~\cite[Theorem 1.1]{DMR}.
Our next main result is concerned with the spherical variety property of a partition Schubert variety.

\begin{Theorem}\label{intro:T3}[stated as Theorem~\ref{T:countofNTn312} in the text]
Let $w\in \mathbf{S}_n^{312}$. 
If $X_w$ is a $T$-complexity one Schubert variety, then $X_w$ is a spherical variety. 

Furthermore, if $\mathbf{NT}_n^{312}$ denotes the subset of elements $w\in \mathbf{S}_n^{312}$ such that $X_w$ is a nearly toric Schubert variety, then we have $|\mathbf{NT}_n^{312}|=(n-2)2^{n-3}$.
\end{Theorem}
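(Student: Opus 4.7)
The plan is to leverage Theorem~\ref{intro:T1} together with a block decomposition of 312-avoiding permutations. I start with the observation that each of the patterns $3412$, $4231$, and $25314$ contains $312$ as a sub-pattern (for example, $25314$ has values $5, 1, 4$ at positions $2, 4, 5$, which is of type $312$). Consequently every $w \in \mathbf{S}_n^{312}$ automatically avoids $3412$, $4231$, and $25314$. By the Lakshmibai--Sandhya criterion, every partition Schubert variety is then smooth, and the singular branch of Theorem~\ref{intro:T1}(1) cannot apply since it requires $w$ to contain $3412$.

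For Part (1), assuming $c_T(X_w) = 1$ with $w \in \mathbf{S}_n^{312}$, I would invoke the Lee--Masuda--Park pattern criterion for $T$-complexity one Schubert varieties; combined with the avoidances above, this forces $w$ to contain $321$ exactly once. Theorem~\ref{intro:T1}(2) then immediately yields that $X_w$ is smooth nearly toric, and in particular spherical.

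For Part (2), by Part (1) the set $\mathbf{NT}_n^{312}$ coincides with the set of 312-avoiding permutations of $[n]$ containing exactly one $321$ pattern. To count these, I would use the block decomposition: a 312-avoiding permutation $w$ factors uniquely as a concatenation of indecomposable 312-avoiding sub-permutations along its splitting positions (indices $i$ with $w([i]) = [i]$), and every indecomposable 312-avoiding permutation of $[m]$ ends with the value $1$. A direct analysis shows that indecomposable 312-avoiding permutations of $[m]$ carrying no $321$ pattern are precisely the cyclic shifts $(2, \ldots, m, 1)$, while those carrying exactly one $321$ pattern number $m-2$ for $m \geq 3$, corresponding to applying an arbitrary adjacent transposition to the otherwise increasing prefix of $(2, 3, \ldots, m, 1)$. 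Since every $321$ pattern of $w$ lies inside a single block, requiring exactly one $321$ pattern amounts to a composition of $n$ with one distinguished ``special'' part of size $m_\ast \geq 3$ (contributing $m_\ast - 2$ internal choices) and any number of ``ordinary'' parts (each contributing one choice). The corresponding generating functions combine as
\begin{equation*}
A(x) \;=\; \frac{1-x}{1-2x} \cdot \frac{x^3}{(1-x)^2} \cdot \frac{1-x}{1-2x} \;=\; \frac{x^3}{(1-2x)^2},
\end{equation*}
whose coefficient of $x^n$ equals $(n-2)\,2^{n-3}$.

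The main obstacle is the implication ``$c_T(X_w)=1 \Rightarrow w$ contains $321$ exactly once'' in Part (1): this does not follow directly from Theorem~\ref{intro:T1}(2), which assumes the pattern condition rather than deducing it. I expect to resolve this either via the Lee--Masuda--Park pattern criterion or by a direct verification using the additive formula $c_T(X_w) = \ell(w) - |\{i : s_i \leq w\}|$ combined with the block decomposition described above.
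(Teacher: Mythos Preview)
Your argument is correct. For Part~(1), your reasoning coincides with the paper's: since $312$ is a sub-pattern of each of $3412$, $4231$, and $25314$, Lakshmibai--Sandhya gives smoothness, and Theorem~\ref{intro:T1}(2) (or equivalently Theorem~\ref{T:smoothNRT}) gives sphericity. The ``obstacle'' you flag is not a genuine gap: the Lee--Masuda--Park criterion (quoted in the Preliminaries) states precisely that a \emph{smooth} Schubert variety has $c_T=1$ if and only if $w$ contains $321$ exactly once and avoids $3412$, so the missing implication is already available. The paper's proof invokes Theorem~\ref{intro:T1}(2) in the same way, implicitly using Lee--Masuda--Park to guarantee the single $321$ occurrence.

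For Part~(2), your approach is genuinely different from the paper's. The paper first proves a Dyck-path characterization (Theorem~\ref{T:1Dyckpathversion}): a $312$-avoiding permutation contains $321$ exactly once if and only if its Dyck path has a unique peak on the third diagonal and no peaks above it. The count then becomes a direct lattice-point argument: $n-2$ choices for the third-diagonal peak, and $2^{n-3}$ independent choices for which of the remaining $n-3$ second-diagonal points are peaks. Your route instead uses the block factorization of $312$-avoiding permutations at their splitting positions, identifies the indecomposable pieces with zero or one $321$ pattern (one cyclic shift and $m-2$ adjacent-transposition variants, respectively), and assembles the count via the generating function $\frac{x^3}{(1-2x)^2}$. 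Your method is self-contained and bypasses the Bandlow--Killpatrick bijection and Theorem~\ref{T:1Dyckpathversion} entirely; the paper's method, on the other hand, yields an explicit bijective description in terms of Dyck-path peaks that feeds into the later Dyck-path results of Section~\ref{S:Allpartition}.
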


The 312-avoiding permutations have interesting combinatorial interpretations.
For example, Bandlow and Killpatrick proved in~\cite{BandlowKillpatrick} that the set of all 312-avoiding permutations in $\mathbf{S}_n$ 
is in a bijection with the set of all Dyck paths of size $n$. 
Here, by a {\em Dyck path of size $n$} we mean a lattice path in $\Z^2$ that moves with unit North (0,1) and unit East (1,0) steps, starting at the origin
and ending at $(n,n)$ while staying weakly above the main diagonal. 
The Dyck paths play an important role in the theory of Macdonald polynomials,~\cite{Haglund}.
Now, let $N$ (resp. $E$) denote the unit North (resp. East) step. 
We call a lattice path in $\Z^2$ that is of the form $NN\dots N EE\dots E$, where the number of $N$'s and $E$'s are equal,
an {\em elbow}. 
Let $\pi$ be a Dyck path of size $n$. 
We call $\pi$ a {\em ledge} if its word is of the form 
\begin{align*}
\pi = \underbrace{NN\dots N}_{\text{$n-1$ North steps}} \underbrace{E\dots ENE\cdots EE}_{\text{$n$ East steps}},
\end{align*}
where the path ends with at least two East steps, and there is a unique isolated North step.
Finally, we call $\pi$ a {\em spherical Dyck path} if every ``connected component'' (see Definition~\ref{D:connectedcomponent}) $\tau$ of $\pi$ is either an elbow or a ledge,
or if every connected component of $\tau$ that lies above the line $y-x- 1=0$ is either an elbow or a ledge that extends to the main diagonal of $\pi$. 
For example, the Dyck paths in Figure~\ref{F:acceptable-intro} are spherical Dyck paths:

\begin{figure}[htp]
\begin{center}
\scalebox{0.7}{
\begin{tikzpicture}

\begin{scope}[xshift=-5.5cm]
\draw (0,0) grid (8,8);
\draw[color=blue, line width=3pt] (0,0) -- (0,3) -- (3,3) -- (3,6) -- (5,6) -- (5,7) --(7,7) --(7,8)-- (8,8);
\draw[ultra thick, dashed ] (0,0) -- (8,8);
\node at (4,-.75) {(a)};
\end{scope}

\begin{scope}[xshift=5.5cm]
\draw (0,0) grid (8,8);
\draw (0,0) grid (8,8);
\draw[color=blue, line width=3pt] (0,0) -- (0,2) -- (1,2) -- (1,5) -- (4,5) -- (4,8) -- (8,8);
\draw[ultra thick, dashed ] (0,0) -- (8,8);
\node at (4,-.75) {(b)};
\end{scope}
\end{tikzpicture}    
}
\end{center}
\caption{Two spherical Dyck paths.}
\label{F:acceptable-intro}
\end{figure}
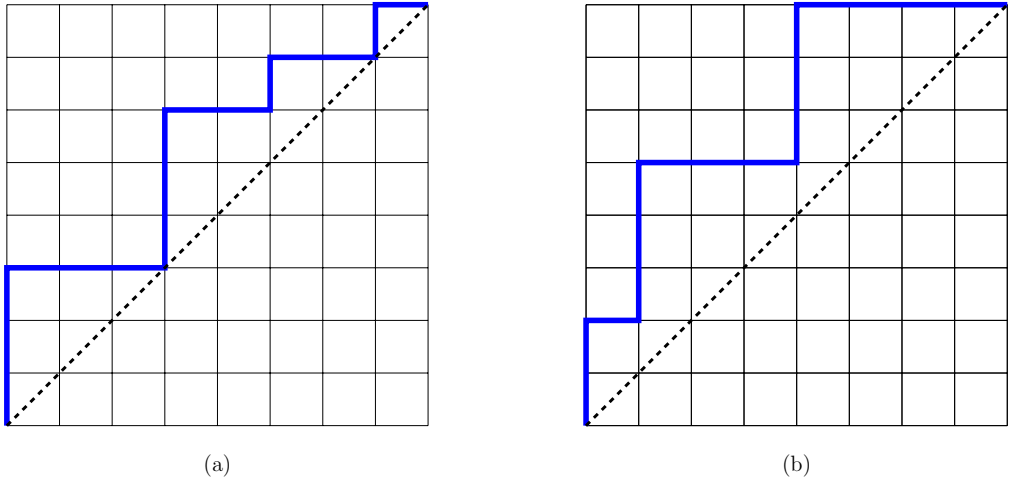
\medskip

We show that spherical Dyck paths correspond to spherical Schubert varieties $X_w$ if $w$ avoids 312 pattern. 
More precisely, we have the following statement. 

\begin{Theorem}\label{intro:T4}[stated as Theorem~\ref{T:sphericalDyckpaths} in the text]
Let $w\in \mathbf{S}_n^{312}$. 
Then $X_{w}$ is a spherical Schubert variety if and only if the Dyck path corresponding to $w$ is a spherical Dyck path.
\end{Theorem}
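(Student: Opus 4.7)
The plan is to translate sphericity of $X_w$ into a combinatorial condition on $w$, then to read off the resulting condition on the corresponding Dyck path using the Bandlow--Killpatrick bijection. Since both 3412 and 25314 contain 312 as a sub-pattern, every $w \in \mathbf{S}_n^{312}$ automatically avoids them. Consequently the conditions of Theorem~\ref{intro:T1} simplify considerably for our $w$: the singular case in Theorem~\ref{intro:T1}(1) cannot occur, while Theorem~\ref{intro:T1}(2) reduces to the statement that $X_w$ is (smooth) nearly toric if and only if $w$ contains the pattern 321 exactly once. Combined with the standard fact that $X_w$ is toric if and only if $w$ avoids both 321 and 3412 (so, for $w \in \mathbf{S}_n^{312}$, if and only if $w$ avoids 321) and with Theorem~\ref{intro:T3}, this already handles the two easy cases in which every connected component of the Dyck path of $w$ is an elbow (toric block) or a ledge (smooth nearly toric block).

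Next, I would set up the connected component decomposition. Write $\pi = \tau_1 \cdots \tau_k$, where each $\tau_j$ is a maximal irreducible subpath of the Dyck path $\pi$ that touches the main diagonal only at its endpoints. I would show that, under the Bandlow--Killpatrick bijection, this decomposition corresponds to a block decomposition of $w$ into 312-avoiding permutations $w^{(1)}, \ldots, w^{(k)}$ acting on disjoint consecutive intervals, and that by a Levi-type reduction the Schubert variety $X_w$ factors as a product $X_{w^{(1)}} \times \cdots \times X_{w^{(k)}}$ inside an appropriate partial flag variety. Since both $c_B$ and $c_T$ are additive under such products, $X_w$ is spherical if and only if each $X_{w^{(j)}}$ is spherical. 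As the definition of a spherical Dyck path is imposed componentwise, this reduces the theorem to the case where $\pi$ is irreducible.

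For an irreducible $\pi$ corresponding to an indecomposable $u \in \mathbf{S}_m^{312}$, I would argue by induction on the number of 321 patterns in $u$, which matches the number of NE-corners of $\pi$ lying strictly above the line $y-x-1=0$. The base cases (zero or one such corner) give the elbow and the ledge, and the earlier paragraphs already show that $X_u$ is toric or smooth nearly toric, respectively, and hence spherical. When there are at least two such corners, I would peel off the outer staircase of $\pi$ and examine the residual connected components lying above $y-x-1=0$; the second clause of the definition of a spherical Dyck path requires each such residual component to be an elbow or a ledge reaching back to the main diagonal of $\pi$. I would match this condition against a direct $B$-orbit computation for $X_u$, or equivalently against the pattern-avoidance criterion for spherical Schubert varieties in~\cite{gao2021classification, Gaetz}, translated through the bijection.

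The main obstacle will be this last step, namely verifying that the recursive clause in the definition of spherical Dyck path captures precisely the condition $c_B(X_u)=0$ for Schubert varieties that lie outside the nearly toric regime. Neither Theorem~\ref{intro:T1} nor Theorem~\ref{intro:T3} addresses the case $c_T(X_u)\ge 2$, so these higher-complexity spherical cases must be treated separately, either by an induction that strips a single 321 pattern from $u$ at a time and controls how $B$-orbits and $T$-orbits behave under the strip, or by invoking the external pattern-avoidance characterization of spherical Schubert varieties and checking, pattern by pattern, that it corresponds under the Bandlow--Killpatrick bijection to the geometric picture captured by elbows, ledges, and their ``extension to the main diagonal'' encoded in the spherical Dyck path definition.
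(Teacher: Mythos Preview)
Your reduction to connected components matches the paper (this is Proposition~\ref{P:factorizationuv}), but the identification of the base cases is wrong, and this error propagates through the whole plan. An elbow $N^kE^k$ corresponds under the Bandlow--Killpatrick map to the longest element $k(k-1)\cdots 1$ of $\mathbf{S}_k$, which for $k\ge 3$ contains many $321$ patterns and is certainly not toric; what makes $X_{w_0}$ spherical is that it is the full flag variety, not that $c_T=0$. Likewise a ledge of size $n$ has its first peak on the $(n-1)$-st diagonal, so by Theorem~\ref{T:1Dyckpathversion} it gives a permutation with a unique $321$ pattern only when $n\le 4$; for $n\ge 5$ a ledge already has $c_T(X_w)\ge 2$. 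Thus Theorems~\ref{intro:T1} and~\ref{intro:T3} do \emph{not} cover the elbow/ledge cases, and your ``two easy cases'' are precisely the cases that require new work. Your induction on the number of $321$ patterns never gets off the ground, because the base cases themselves lie outside the $c_T\le 1$ regime.

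The paper's proof is organized differently. For the direction ``spherical Dyck path $\Rightarrow$ spherical Schubert variety'' (Lemma~\ref{L:edgeorledge}) one writes down the one-line notation of $w$ explicitly from the elbow/ledge structure of $\pi^{(1)}$ and verifies Gaetz's $21$-pattern criterion by hand; the key observation is that all $231$ patterns in such a $w$ end at the last position $w_n=1$, which rules out $24531$, $34521$, $35421$. For the converse the paper does \emph{not} induct on $321$ patterns but on $n$, and the engine is the Coxeter-element characterization $w=w_0(J(w))c$ of spherical Schubert varieties from~\cite{CanSaha,GHY2024}: after reducing to $\pi=\pi^{(0)}$, one shows (Lemma~\ref{L:skippedsr}) that each touching point of $\pi^{(1)}$ on the first diagonal forces a non-descent, so $w_0(J(w))$ factors along $\pi^{(1)}$, and peeling off the top connected component $M_1$ either reduces to a shorter spherical permutation or produces a contradiction via an explicit $34521$ pattern built from Lemmas~\ref{L:Helpful1}--\ref{L:Helpful3} and~\ref{L:mds}. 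None of these ingredients appears in your outline.
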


We structured our paper as follows. 
In Section~\ref{S:Preliminaries}, we setup our notation and provide the statements of some theorems that we use throughout our paper. 
The purpose of Section~\ref{S:25314} is to establish tools for analyzing the parametrizing set of smooth nearly toric Schubert varieties by advancing some results of Daly~\cite{Daly2010,Daly2013}. 
In Section~\ref{S:T1}, we prove the two parts of our Theorem~\ref{intro:T1}.
In Section~\ref{S:T2}, we prove our Theorem~\ref{intro:T2}. 
Additionally, we determine the generating series of the number of nearly toric Schubert varieties. 
The purpose of Section~\ref{S:Lattice} is to present some useful lemmas towards the proofs of our remaining results. 
In Section~\ref{S:Nearly}, we prove our Theorem~\ref{intro:T3}, determining the number of Dyck paths corresponding to the nearly toric partition Schubert varieties.
Finally, in Section~\ref{S:Allpartition}, we prove Theorem~\ref{intro:T4}.

\section{Preliminaries}\label{S:Preliminaries}

Throughout this article $\Z_+$ denotes the set of positive integers.
For $n\in \Z_+$, we set $[n]:=\{1,2,\dots, n\}$.

Let $G$ be a connected reductive algebraic group. 
Let $T$ and $B$ denote, respectively, a maximal torus and a Borel subgroup such that $T\subseteq B$. 
Let $W$ denote the Weyl group of $G$.
For $w\in W$, the Zariski closure $\overline{BwB/B} \subseteq G/B$, denoted $X_{w}$, is called the {\em Schubert variety} associated with $w$.

Let $L$ denote the {\it standard} Levi subgroup of the stabilizer group of $X_{w}$ in $G$. 
Here, {\it standard} refers to the assumption that $T\subseteq L$. 
We fix a Borel subgroup $B_L\subseteq L$ such that $T\subseteq B_L$.  
If $B_L$ has only finitely many orbits in $X_{w}$, then $X_{w}$ is said to be a {\em spherical Schubert variety}. 
More generally, a normal $G$-variety $X$ is called {\em spherical} if $B$ has an open orbit in $X$. 
It is a well-known result of Brion~\cite{Brion1986} and Vinberg~\cite{Vinberg1986} that a normal variety $X$ is a spherical $G$-variety if and only if $B$ has only finitely many orbits in $X$.
As we mentioned in the introduction, spherical varieties are characterized by a notion of a complexity as well.
Let $\C(X)$ denote the field of rational functions on $X$.
By a well-known result of Rosentlicht~\cite{Rosenlicht1956}, the $T$- and the $B$-complexities of $X$ are given by 
\begin{align*}
c_T(X) := \text{transcendence degree of the field of $T$-invariants $\C(X)^T$ over $\C$,}\\
c_B(X) := \text{transcendence degree of the field of $B$-invariants $\C(X)^B$ over $\C$.}
\end{align*}
In particular, $c_B(X) = 0$ means that $B$ has an open orbit. 
Therefore, a normal $G$-variety $X$ is spherical if and only if $c_B(T) = 0$.

In the rest of this article, we will use reductive groups of type A only. 
The complex general linear group of all invertible $n\times n$ matrices with entries from $\C$ is denoted by $\mathbf{GL}(n,\C)$. 
We denote by $\mathbf{B}(n,\C)$ the Borel subgroup of $\mathbf{GL}(n,\C)$ consisting of all upper triangular matrices. 
We denote by $\mathbf{T}(n,\C)$ the maximal torus consisting of all diagonal matrices in $\mathbf{B}(n,\C)$. 
To simplify our notation, we will use $G$, $B$, and $T$ to denote these groups. 
In this notation, the coset space $\mathbf{GL}(n,\C)/\mathbf{B}(n,\C)$ gives the complex flag variety $Fl(n,\C)$.

The Weyl group of $G$ is given by the symmetric group $\mathbf{S}_n$. 
The {\em Bruhat-Chevalley order on $\mathbf{S}_n$} is the order defined by 
\begin{align*}
v\leq w \iff X_{v}\subseteq X_{w} \qquad (v,w\in \mathbf{S}_n).
\end{align*}
It is well-known that $(\mathbf{S}_n,\leq)$ is a graded poset with the rank function, $\ell : \mathbf{S}_n\to \N,\ \ell(w) := \dim X_{w}$.
The number $\ell(w)$ is called the {\em length} of $w$. 
There are various equivalent descriptions of the length function $w\mapsto \ell(w)$. We mention some of them to establish notation. Further information about the length function and the Bruhat-Chevalley order can be found in reference~\cite[Chapter 2]{BjornerBrenti}.
Let $w_1w_2\dots w_n$ denote the {\em one-line expression} for $w$ so that $w_i$ is the value of $w$ at $i\in \{1,\dots, n\}$. 
Then a pair $(i,j)$, where $1\leq i < j \leq n$ is called an {\em inversion of $w$} if $w_j > w_i$. 
The number of inversions of $w$ equals $\ell(w)$. 
Let $s_i$ denote the simple transposition, $s_i:=(i, i+1)$, for $i\in [n-1]$. 
When confusion is unlikely, we denote the set $\{s_1,\dots, s_{n-1}\}$ by $S$. 
The length of an element $w\in \mathbf{S}_n$ is also given by the minimum number of simple transpositions required to write $w$ as a product. 
For $I\subseteq S$, we denote by $\mathbf{S}_I$ the parabolic subgroup of $\mathbf{S}_n$ that is generated by the elements of $I$. 
There is a unique maximal element of $\mathbf{S}_I$ with respect to Bruhat-Chevalley order.
We denote this element by $w_0(I)$.
However, if $I=S$, then we write $w_0$ instead of $w_0(I)$. 
For $w\in \mathbf{S}_n$, the {left descent set of $w$} is defined by $J(w) := \{ s \in S \mid \ell(sw) < \ell(w) \}$. 
Finally, an element $w\in \mathbf{S}_I$ is called a {Coxeter element of $\mathbf{S}_I$} if $w$ is given by a product of all simple transpositions from $I$ without repetition. 

We now go back to our discussion of spherical varieties. 
The question of determining which Schubert varieties are of complexity 1 is solved by Lee, Masuda, and Park in~\cite{LeeMasudaPark}. 
In~\cite[Theorem 1.2]{LeeMasudaPark} they show that, for $w\in \mathbf{S}_n$, the following statements are equivalent. 
\begin{enumerate}
\item $X_{w}$ is smooth and $c_T(X_{w})=1$. 
\item $w$ contains the pattern 321 exactly once and avoids $3412$.
\item  There exists a reduced word of $w$ containing $s_i s_{i+1} s_i$ as a factor and no other repetitions.
\end{enumerate}
In~\cite[Theorem 1.3]{LeeMasudaPark} the same authors show that for $w\in \mathbf{S}_n$, the following statements are equivalent. 
\begin{enumerate}
\item $X_{w}$ is singular and $c_T(X_{w})=1$. 
\item $w$ contains the pattern 3412 exactly once and avoids the pattern $321$.
\end{enumerate}
There are several useful criteria for checking which Schubert varieties are spherical.
For $w\in \mathbf{S}_n$, let $L$ denote the stabilizer group $L:=\mathrm{Stab}_G(X_w)$. 
The following assertion was conjectured in~\cite{HY2022}.
It has been recently proven in~\cite{CanSaha} and~\cite{GHY2024}:
\begin{itemize}
\item $X_w$ is a spherical $L$-variety if and only if $w_0(J(w))w$ is a Coxeter element of $\mathbf{S}_{J(w)}$.
\end{itemize}
The following assertion, referred to as the {\em pattern avoidance criterion of spherical Shubert varieties},
was initially conjectured in~\cite{gao2021classification} and subsequently proved in~\cite[Theorem 1.4]{Gaetz}.
\begin{itemize}
\item $X_w$ is a spherical $L$-variety if and only if $w$ avoids the following 21 patterns 
\begin{align}\label{A:Gaetz}
    \mathscr{P}:=\left\{ \begin{matrix}
    24531 & 25314 & 25341 &34512& 34521 & 35412 &35421 \\
     42531 & 45123 & 45213 & 45231 & 45312 & 52314 &52341\\
    53124 & 53142 & 53412 & 53421 & 54123 & 54213 & 54231
    \end{matrix} \right\}.
\end{align}
\end{itemize}
Finally, we want to mention the {\em Lakshmibai-Sandhya criterion for smoothness of Schubert varieties}, proved in~\cite{LakshmibaiSandhya}:
\begin{itemize}
\item $X_w$ is a smooth Schubert variety if and only if $w$ avoids the patterns $3412$ and $4231$.
\end{itemize}

As a direct corollary of these facts, we observe the following fact: 
\begin{Proposition}
A Schubert variety $X_w$ is a smooth spherical variety if and only if $w$ avoids the following nine patterns: 
\begin{align*}
    \mathscr{P}_s=\{
    24531, 25314, 34521, 35421, 53124, 54123, 54213 \} \cup \{3412, 4231\}.
\end{align*}
\end{Proposition}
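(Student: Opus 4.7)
The plan is to combine, in a purely formal way, the two pattern-avoidance criteria recalled immediately above the proposition. By the Lakshmibai-Sandhya criterion, $X_w$ is smooth if and only if $w$ avoids the two patterns $3412$ and $4231$; by Gaetz's criterion, $X_w$ is a spherical $L$-variety if and only if $w$ avoids the twenty-one patterns comprising $\mathscr{P}$. Consequently, $X_w$ is both smooth and spherical if and only if $w$ avoids every pattern in the union $\{3412,4231\}\cup\mathscr{P}$. The content of the proposition is therefore that this union can be trimmed, as an avoidance condition, to the nine-element set $\mathscr{P}_s$; equivalently, one must show that each of the fourteen patterns in $\mathscr{P}\setminus\mathscr{P}_s$ already contains $3412$ or $4231$ as a subpattern, and that each of the seven patterns in $\mathscr{P}_s\setminus\{3412,4231\}$ contains neither.

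To carry this out, for each $\sigma\in\mathscr{P}$ I would enumerate the five length-four subsequences of $\sigma$ obtained by deleting a single entry, flatten each to a permutation of $[4]$, and check whether $3412$ or $4231$ appears. For instance, deleting the initial entry of $25341$ and flattening gives $4231$, while deleting the initial entry of $34512$ and flattening gives $3412$. Analogous single-deletion witnesses handle each of the other twelve patterns of $\mathscr{P}\setminus\mathscr{P}_s$, namely $35412$, $42531$, $45123$, $45213$, $45231$, $45312$, $52314$, $52341$, $53142$, $53412$, $53421$, and $54231$. Conversely, inspection of all $\binom{5}{4}=5$ length-four subsequences of each of the seven permutations $24531$, $25314$, $34521$, $35421$, $53124$, $54123$, $54213$ reveals that none of these flattens to $3412$ or $4231$, so these seven must remain in the reduced list and together with $\{3412,4231\}$ give exactly $\mathscr{P}_s$.

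There is no conceptual obstacle in the argument once the two prior theorems are combined; the only point requiring care is the bookkeeping needed to perform the twenty-one separate pattern searches without error, a task that can be organized into a single short table or verified by a brief computer routine. The proposition then follows immediately, since the avoidance condition defined by $\{3412,4231\}\cup\mathscr{P}$ and the avoidance condition defined by $\mathscr{P}_s$ cut out the same subset of $\mathbf{S}_n$ for every $n$.
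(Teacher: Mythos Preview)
Your proposal is correct and takes essentially the same approach as the paper: the proposition is stated there as a direct corollary of the Lakshmibai--Sandhya and Gaetz criteria, and the explicit case-check you describe (showing that exactly fourteen of the twenty-one patterns in $\mathscr{P}$ contain $3412$ or $4231$ while the remaining seven do not) is precisely the verification the paper carries out later in the proof of Theorem~\ref{T:smoothNRT}.
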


\section{On the 25314 Pattern}\label{S:25314}

In the rest of the paper, for $i\in \N$, by $s_i$ we denote the $i$-th simple transposition, $s_i:=(i,\ i+1)$.
Let $\mathcal{B}_n$ denote the following subset of $\mathbf{S}_n$: 
\begin{align*}
\mathcal{B}_n &:= \left\{ w\in \mathbf{S}_n \mid \text{$w$ avoids 321, contains 3412 only once}\right\}.
\end{align*}
In~\cite[Proposition 5.2]{Daly2013}, Daly shows that a permutation $v\in \mathbf{S}_n$ is an element of $\mathcal{B}_n$ if 
there exists a reduced decomposition of $v$ having $s_i s_{i+1} s_{i-1} s_i$ as a factor with no other repetitions.
Let $\mathcal{M}_n$ denote the following subset of $\mathbf{S}_n$: 
\begin{align*}
\mathcal{A}_n &:= \left\{ w\in \mathbf{S}_n \mid \text{$w$ avoids 3412, contains 321 only once}\right\}.
\end{align*}
In~\cite[Theorem 4.10]{Daly2013}, Daly shows that $v\in \mathbf{S}_n$ is an element of $\mathcal{A}_n$ if 
there exists a reduced decomposition of $v$ having $s_i s_{i+1} s_i$ as a factor with no other repetitions.
Furthermore, in~\cite[Theorem 2.8]{Daly2010}, Daly shows that there is a bijection between $\mathcal{B}_{n+1}$ and $\mathcal{A}_n$.

We are interested in a special subset of $\mathcal{A}_n$.
\begin{align*}
\mathcal{M}_n &:= \left\{ w\in \mathbf{S}_n \mid \text{$w$ avoids 3412, contains 321 only once, and contains 25314} \right\}.
\end{align*}

\begin{Lemma}\label{L:existence1}
Let $v\in \mathbf{S}_n$. Then $v$ is an element of $\mathcal{M}_n$ if and only if there exits a reduced word $\underline{v}$ of $v$ containing $s_{i+2}s_{i+1}s_{i-1}s_{i}s_{i+1}$ for some $i\in \{2,\dots, n-3\}$ as a factor, and no simple transposition $s_j$ other than $s_{i+1}$ and $s_i$ appears more than once in $\underline{v}$. 
\end{Lemma}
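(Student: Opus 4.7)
The plan is to reduce the lemma to Daly's Theorem 4.10 by means of the braid identity
\[
s_{i+2}s_{i+1}s_{i-1}s_{i}s_{i+1}
= s_{i+2}s_{i-1}s_{i+1}s_{i}s_{i+1}
= s_{i+2}s_{i-1}s_{i}s_{i+1}s_{i},
\]
where the first equality uses the commutation $s_{i-1}s_{i+1}=s_{i+1}s_{i-1}$ (valid because $|(i-1)-(i+1)|=2$) and the second uses the braid relation $s_{i+1}s_{i}s_{i+1}=s_{i}s_{i+1}s_{i}$. Thus a reduced word has the five-letter factor of the lemma if and only if it has the Daly factor $s_i s_{i+1} s_i$ preceded by the commuting pair $s_{i+2}s_{i-1}$.

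For the ($\Leftarrow$) direction, I would start with a reduced word $\underline{v} = u_1 \cdot s_{i+2}s_{i+1}s_{i-1}s_{i}s_{i+1} \cdot u_2$ satisfying the non-repetition hypothesis and rewrite it via the identity as the reduced word $\underline{v}' = u_1 \cdot s_{i+2}s_{i-1}s_{i}s_{i+1}s_{i} \cdot u_2$. The hypothesis forces $s_{i-1}$ and $s_{i+2}$ to appear exactly once (they are not among the allowed-to-repeat letters $s_i, s_{i+1}$), and reducedness combined with a short commutation argument rules out any extra $s_i$ or $s_{i+1}$ in $u_1 u_2$ that would contradict Daly's no-other-repetition criterion; any such stray letter could be commuted into the factor and cancelled. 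Daly's Theorem 4.10 then gives $v \in \mathcal{A}_n$. To exhibit a 25314 pattern in $v$, I would compute the one-line notation of $\sigma := s_{i+2}s_{i+1}s_{i-1}s_{i}s_{i+1}$ at the positions $i-1, \ldots, i+3$, obtaining $(i,\, i+3,\, i+1,\, i-1,\, i+2)$, which is precisely the pattern 25314. Multiplying by $u_1$ on the left and $u_2$ on the right transports these five positions into $v$ while preserving their relative order, since every simple transposition outside the Daly factor occurs at most once and so cannot disrupt the pattern.

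For the ($\Rightarrow$) direction, I would start with $v \in \mathcal{M}_n \subseteq \mathcal{A}_n$ and invoke Daly's theorem to obtain a reduced word $\underline{v}_0 = A \cdot s_j s_{j+1} s_j \cdot B$ with no other repetitions. The unique 321 pattern of $v$ sits at positions $a < b < c$, and the index $j$ is read off from Daly's correspondence between this factor and the 321 pattern. Since $v$ also contains 25314, there are two extra positions $p_1 < a$ and $p_5 > c$ with values squeezed between those of the 321 pattern; translating the inversions $(p_1, p_4)$ and $(p_2, p_5)$ back into letters of $\underline{v}_0$ forces $s_{j-1}$ to occur somewhere in $A$ and $s_{j+2}$ to occur somewhere in $B$. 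As all other simple transpositions in $\underline{v}_0$ are distinct, a sequence of commutations brings $s_{j-1}$ and $s_{j+2}$ adjacent to the Daly factor, yielding a subword of the form $s_{j+2} s_{j-1} s_j s_{j+1} s_j$. Reversing the braid identity converts this into $s_{j+2} s_{j+1} s_{j-1} s_j s_{j+1}$, and taking $i = j$ completes the argument.

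The main obstacle is the ($\Rightarrow$) direction, specifically the step that identifies $s_{j-1}$ and $s_{j+2}$ in the Daly-reduced word from the extra inversions of the 25314 pattern, and then verifies that the commutation sequence bringing them adjacent to the factor never runs into a repeated letter. This requires careful inversion bookkeeping, in the spirit of Daly's proof comparing $\mathcal{A}_n$ and $\mathcal{B}_{n+1}$. The other steps — the braid identity, the pattern calculation for $\sigma$, and the direct invocation of Theorem 4.10 — are essentially routine.
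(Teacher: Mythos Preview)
Your overall strategy---reduce to Daly's Theorem~4.10 via the braid identity, and in the forward direction locate $s_{j-1}$ and $s_{j+2}$ in the Daly word and slide them next to the braid factor---is exactly the paper's approach. The $(\Leftarrow)$ direction is essentially as in the paper.

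There is, however, a concrete error in your $(\Rightarrow)$ direction: you place $s_{j+2}$ in $B$ (to the right of $s_j s_{j+1} s_j$), but it must in fact lie in $A$, on the left. This is already visible in the base case $25314 = s_4 s_3 s_1 s_2 s_3$: after commuting and braiding one obtains $s_4 s_1 \cdot s_2 s_3 s_2$ or $s_1 s_4 \cdot s_2 s_3 s_2$, and in every reduced word of $25314$ the letter $s_4 = s_{j+2}$ sits to the \emph{left} of the braid factor (it cannot be moved past $s_{j+1}$). Your own target, $s_{j+2} s_{j-1} s_j s_{j+1} s_j$, shows that you ultimately need $s_{j+2}$ on the left; but if it genuinely lived in $B$ you could not commute it there, since $s_{j+2}$ does not commute with $s_{j+1}$. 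So the commutation step, as you have written it, would fail.

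The fix is exactly what the paper does: argue (from the shape of $25314 = s_4 s_3 s_1 s_2 s_3$) that \emph{both} $s_{j-1}$ and $s_{j+2}$ occur to the left of $s_{j+1} s_j s_{j+1}$. Once both are on the left, the absence of other repetitions lets you slide them adjacent to the factor and you are done. Your instinct that this step is the delicate one is correct; the inversion-bookkeeping you propose is the right kind of argument, but the side on which $s_{j+2}$ lands needs to be corrected.
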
 

\begin{proof}
($\Leftarrow$)
Let $\underline{v}$ be a reduced word for $v$ such that $s_{i+2}s_{i+1}s_{i-1}s_{i}s_{i+1}$ appears as a factor and $\underline{v}$ contains no other repetitions of simple transpositions. 
Since $25314 = s_4 s_3 s_1s_2s_3$, the existence of the factor $s_{i+2}s_{i+1}s_{i-1}s_{i}s_{i+1}$ shows that $v$ contains the pattern 25312. 
We write $s_{i+1}s_{i-1}s_{i}s_{i+1}$ in the form $s_{i-1}s_{i+1}s_{i}s_{i+1}$. 
Hence, $s_{i+1}s_is_{i+1}$ appears in $v$. 
Since there is no repetition of other simple transpositions in $\underline{v}$, by~\cite[Theorem 4.10]{Daly2013}, 
$v$ avoids 3412 and contains 321 only once. 
Hence, $v$ is an element of $\mathcal{M}_n$.

($\Rightarrow$) 
We apply~\cite[Theorem 4.10]{Daly2013} once more.
Hence, there is a reduced word $\underline{v}$ having $s_i s_{i+1} s_i$ as a factor, for some $i\in \{1,\dots, n-1\}$, and $\underline{v}$ contains no other repetition 
of simple transpositions.  
We proceed to show that either $\underline{v}$ contains $s_{i+2}s_{i+1}s_{i-1}s_{i}s_{i+1}$ as a factor, or there is a reduced word that is obtained from $\underline{v}$ 
that contains $s_{i+2}s_{i+1}s_{i-1}s_{i}s_{i+1}$ as a factor. 
To this end, first, we observe that $25314 = s_4 s_3 s_1s_2s_3$. 
Evidently, the segment 531 in the pattern 25314 corresponds to the factor $s_{i+1} s_{i} s_{i+1}$ in $\underline{v}$.
Since 2 comes before 5314 in 25314, we see not only that $s_{i-1}$ appears in $\underline{v}$ but also that it appears to the left of $s_{i+1} s_{i} s_{i+1}$. 
Likewise, since 4 comes after 2531, we see not only that $s_{i+2}$ appears in $\underline{v}$ but also that it appears on the left of $s_{i+1} s_{i} s_{i+1}$.
Now, if we have $s_{i+2}s_{i-1}s_{i+1}s_is_{i+1}$ or $s_{i-1}s_{i+2}s_{i+1}s_is_{i+1}$, then we proved our claim. 
Otherwise, since there are no other repetitions in $\underline{v}$, we move every simple transposition that sits between the factors 
$s_{i+1}, s_{i-1}$, and $s_{i+1}s_is_{i+1}$ either to the right or to the left of them.
Hence, we obtain a new reduced word of $v$ having either $s_{i+2} s_{i-1} s_{i+1} s_{i} s_{i+1}$ or $s_{i-1} s_{i+2} s_{i+1} s_{i} s_{i+1}$ as a factor. 
Hence the proof of the implication $\Rightarrow$ is complete. 
\end{proof}

Let us assume that $w$ and $v$ are from $\mathcal{B}_n$.
Let $\underline{w}$ and $\underline{v}$ be reduced words for $w$ and $v$, respectively. 
In \cite[Proposition 2.2.]{Daly2010}, Daly proves that,
for $\{i,j\}\subseteq \{2,\dots, n-2\}$, if $s_{i}s_{i+1}s_{i-1}s_i$ and $s_{j}s_{j+1}s_{j-1}s_j$ are the factors of $\underline{w}$ and $\underline{v}$ respectively, 
then $w \neq v$ whenever $j \neq i$. 
We proceed to show that this feature holds for the elements of $\mathcal{M}_n$ as well.

\begin{Lemma}\label{L:uniqueness1}
Let $w$ and $v$ be two elements from $\mathcal{M}_n$.
Let $\underline{w}$ and $\underline{v}$ be reduced words for $w$ and $v$, respectively. 
We assume that there exists $\{i,j\}\subseteq \{2,\dots, n-3\}$ such that 
$s_{i+2}s_{i+1}s_{i-1}s_{i}s_{i+1}$ and $s_{j+2}s_{j+1}s_{j-1}s_js_{j+1}$ are factors of $\underline{w}$ and $\underline{v}$, respectively. 
Under these assumptions, if the indices $i$ and $j$ are not the same, then we have $w \neq v$. 
\end{Lemma}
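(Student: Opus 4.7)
The plan is to prove the contrapositive: assume $w=v$ and deduce $i=j$. Set $u:=w=v$, so $u\in\mathcal{M}_n\subseteq\mathcal{A}_n$.

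First, I would simplify each reduced word. In $\underline{w}$, since $s_{i+1}$ and $s_{i-1}$ commute (their indices differ by $2$), the factor $s_{i+2}s_{i+1}s_{i-1}s_is_{i+1}$ can be rewritten as $s_{i+2}s_{i-1}s_{i+1}s_is_{i+1}$; applying the braid relation $s_{i+1}s_is_{i+1}=s_is_{i+1}s_i$ then yields a new reduced word $\underline{w}'$ of $u$ whose factor $s_{i+2}s_{i-1}s_is_{i+1}s_i$ contains the 3-letter block $s_is_{i+1}s_i$, and in which no simple transposition outside $\{s_i,s_{i+1}\}$ appears more than once. The same maneuver applied to $\underline{v}$ yields $\underline{v}'$ with the analogous factor $s_js_{j+1}s_j$. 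Both $\underline{w}'$ and $\underline{v}'$ now fit the framework of \cite[Theorem 4.10]{Daly2013} as witnessing reduced words for $u\in\mathcal{A}_n$.

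Second, I would argue that the index of the $s_ks_{k+1}s_k$ factor in such a reduced word is determined by $u$ alone. Since $u\in\mathcal{A}_n$, it contains a unique 321-pattern, say at positions $p_1<p_2<p_3$. Using the standard bijection between positions of a reduced word and inversions of the associated permutation, the three letters $s_i,s_{i+1},s_i$ of the factor produce precisely the three inversions $(p_1,p_2)$, $(p_1,p_3)$, and $(p_2,p_3)$ arising from the unique 321-pattern of $u$. The middle letter $s_{i+1}$ is then forced to swap the extremal values $u(p_1)$ and $u(p_3)$ during the left-to-right reading of $\underline{w}'$, pinning down $i$ as an explicit function of the triple $(u(p_1),u(p_2),u(p_3))$. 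Running the same analysis on $\underline{v}'$ forces $j$ to equal the same function of that triple, whence $i=j$, as required.

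The main obstacle is the second step: making precise the correspondence between the three letters of the factor $s_is_{i+1}s_is$ and the three inversions of the unique 321-pattern of $u$. This parallels Daly's uniqueness argument for the $\mathcal{B}_n$ setting in \cite[Proposition 2.2]{Daly2010}, and through the bijection $\mathcal{B}_{n+1}\leftrightarrow\mathcal{A}_n$ of \cite[Theorem 2.8]{Daly2010} the $\mathcal{A}_n$ analog should transfer with only an index shift. I expect the remaining bookkeeping—verifying that the surrounding letters $s_{i+2}$ and $s_{i-1}$ of the original factor do not disturb the identification—to be routine, since by hypothesis they occur nowhere else in $\underline{w}$ and hence cannot interfere with the inversions attributable to the triple $(p_1,p_2,p_3)$.
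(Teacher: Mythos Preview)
Your strategy is valid and takes a genuinely different route from the paper. The paper's proof works directly with the Exchange Property: assuming $w=v$, it first observes that $s_i$ is a right descent of $w$ (after rewriting the special factor as $s_{i+2}s_{i-1}s_is_{i+1}s_i$, the suffix following $s_is_{i+1}s_i$ avoids $s_{i-1},s_i,s_{i+1}$ and hence commutes past $s_i$), so $ws_i=vs_i$ has reduced expressions obtained by deleting one letter from each of $\underline{w}$ and $\underline{v}$. Comparing supports of these two expressions for the same element forces the letter deleted from $\underline{v}$ to be a repeated one, which pins it to the special factor of $\underline{v}$ and yields $i=j$. Your approach instead reduces to uniqueness of the index in the $\mathcal{A}_n$ setting and reads it off the unique $321$-pattern; this is more conceptual, but as written you defer the crux to a transfer of Daly's $\mathcal{B}_n$ result rather than proving it.

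That deferral is unnecessary, because one observation completes your second step directly. In your $\underline{w}'$ the letters $s_i$ and $s_{i+1}$ occur only inside the subfactor $s_is_{i+1}s_i$, so the prefix $\alpha$ preceding this subfactor lies in the parabolic subgroup $\langle s_k:k\ne i,i+1\rangle$, which fixes $i+1$ and preserves each of the blocks $\{1,\dots,i\}$ and $\{i+2,\dots,n\}$. The three reflections contributed by $s_is_{i+1}s_i$ are therefore the three transpositions among the values $\alpha(i)\le i$, $\alpha(i+1)=i+1$, $\alpha(i+2)\ge i+2$; since these constitute the unique triangle in the inversion set of $u$ (equivalently, the unique $321$-pattern), the middle value of that triangle is forced to be $i+1$. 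Thus $i=u(p_2)-1$ is intrinsic to $u$, and the identical calculation on $\underline{v}'$ gives $j=u(p_2)-1$. Your remark that the middle letter swaps the extremal values $u(p_1),u(p_3)$ is correct but does not by itself determine $i$; the missing ingredient is precisely that $\alpha$ fixes $i+1$.
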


\begin{proof}
Assume towards a contradiction that $w=v$. 
Then we have $\underline{w}s_i = \underline{v} s_i$.
Since $s_{i+2}s_{i+1}s_{i-1}s_{i}s_{i+1}$ is a factor of $\underline{w}$ and there is no other repetitions of simple transpositions in $\underline{w}$, 
we have $\ell(ws_i) = \ell(w) -1$. 
Let $W$ (resp. $V$) denote the set of simple transpositions that appear in the reduced word 
$\underline{w} := s_{t_1}s_{t_2}\cdots s_{t_r}$ (resp. $\underline{v} = s_{q_1}s_{q_2}\cdots s_{q_r}$).
Since $\underline{w}$ and $\underline{v}$ are two reduced words for the same permutation, we have $W=V$. 
At the same time, since $\ell(ws_i) = \ell(w) -1$, by the Exchange Property~\cite[Section 1.5]{BjornerBrenti}, we know that 
\begin{align}
w' &:= ws_i = s_{t_1}s_{t_2}\cdots \widehat{s}_{t_m} \cdots s_{t_r}, \label{A1}\\
v' &:= vs_i = s_{q_1}s_{q_2}\cdots \widehat{s}_{q_o} \cdots s_{q_r}. \label{A2} 
\end{align}
Here, the hats on the simple transpositions $s_{t_m}$ and $s_{q_o}$ indicate that we omit these simple transpositions from the reduced words $\underline{w}$ and $\underline{v}$. 
While initially we are not certain about the relationship between $s_{q_o}$ and $s_i$, we are certain that $s_{t_m} = s_i$ holds. 
This assertion arises from the presence of the factor $s_{i+1}s_i s_{i+1}$ in $\underline{w}$ without any other repetitions of simple transpositions. 
At the same time, considering that the right-hand sides of (\ref{A1}) and (\ref{A2}) are reduced words, the sets of simple transpositions in these expressions are identical.
Given that $V=W$, we conclude that the reduced word $\underline{v}$ must contain either $s_i s_{i+1} s_i$ or $s_{i+1}s_i s_{i+1}$ as a factor. 
Otherwise, the set of simple transpositions in (\ref{A2}) would be a proper subset of $W$, which contradicts our earlier equality.
In either scenario, where $\underline{v}$ has one of the specified factors, the absence of repeated simple transpositions in $\underline{v}$ leads us to the conclusion that the factor $s_{j+2}s_{j+1}s_{j-1}s_js_{j+1}$ in $\underline{v}$ must be equal to the factor $s_{i+2}s_{i+1}s_{i-1}s_{i}s_{i+1}$. This, in turn, implies that $s_i = s_j$, thereby accomplishing the proof we set out to establish.
\end{proof}

As a corollary of these lemmas, we have the following proposition.

\begin{Proposition}\label{P:uniquereducedword}
Let $v \in \mathbf{S}_n$. 
Then $v$ is an element of $\mathcal{M}_n$ if and only if there exits a reduced word $\underline{v}$ of $v$ containing $s_{i+2}s_{i+1}s_{i-1}s_{i}s_{i+1}$ for some $i\in \{2,\dots, n-3\}$ as a factor, and no other repetitions. 
\end{Proposition}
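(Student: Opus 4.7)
The plan is to derive Proposition~\ref{P:uniquereducedword} as a sharpening of Lemma~\ref{L:existence1}, by feeding it Daly's Theorem~\cite[Theorem 4.10]{Daly2013} as the source of the starting reduced word. The point is that Lemma~\ref{L:existence1} already produces a reduced word with the factor $s_{i+2}s_{i+1}s_{i-1}s_is_{i+1}$ and no other repetitions \emph{outside} $\{s_i,s_{i+1}\}$; upgrading this to the absence of \emph{any} other repetition will follow once the construction is started from Daly's word, because the modifications involved alter the multiset of simple transpositions of the word in a very controlled way.

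The backward implication is immediate: the hypothesis of the proposition is strictly stronger than that of Lemma~\ref{L:existence1}'s backward direction, since forbidding any repetition beyond those visible in the factor also rules out extra copies of $s_i$ and $s_{i+1}$. For the forward implication, assume $v\in\mathcal{M}_n$. From $\mathcal{M}_n\subseteq\mathcal{A}_n$ and Daly's Theorem~4.10, there is a reduced word $\underline{v}^{(0)}$ of $v$ containing $s_is_{i+1}s_i$ as a factor with no other repetitions whatsoever. Hence, in $\underline{v}^{(0)}$, the letter $s_i$ appears exactly twice (both inside the factor), $s_{i+1}$ appears exactly once (inside the factor), and every other simple transposition appears at most once. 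A single braid move converts this factor into $s_{i+1}s_is_{i+1}$, after which the commutation argument inside Lemma~\ref{L:existence1}'s forward direction applies verbatim: the occurrence of the $25314$ pattern in $v$ forces both $s_{i-1}$ and $s_{i+2}$ to appear in the word on the left of the braided factor, and commutations relocate them so that $s_{i+2}s_{i+1}s_{i-1}s_is_{i+1}$ becomes a factor of the resulting reduced word $\underline{v}$.

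The closing observation is that commutation moves preserve the multiset of letters of a reduced word, while the single braid move used at the beginning merely exchanges the multiplicities of $s_i$ and $s_{i+1}$. Consequently, $\underline{v}$ contains $s_{i+1}$ exactly twice, $s_i,\,s_{i-1},\,s_{i+2}$ each exactly once, and every other simple transposition at most once; moreover, all five of these occurrences lie inside the factor $s_{i+2}s_{i+1}s_{i-1}s_is_{i+1}$. Hence no simple transposition is repeated outside that factor, which is precisely the conclusion sought. The one point that requires attention is whether the relocation step really succeeds past possibly obstructive singleton letters such as $s_{i-2}$ or $s_{i+3}$, which fail to commute with $s_{i-1}$ and $s_{i+2}$ respectively; but any such obstruction commutes with every letter of the braided factor $s_{i+1}s_is_{i+1}$ and can therefore be moved to the opposite side of the factor, a step already handled inside the proof of Lemma~\ref{L:existence1} without introducing any new repetitions.
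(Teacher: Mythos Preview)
Your argument is correct and follows essentially the same route as the paper, namely deriving the proposition from Lemma~\ref{L:existence1} and Daly's Theorem~4.10. The paper's own proof simply cites Lemma~\ref{L:existence1} for existence and Lemma~\ref{L:uniqueness1} for uniqueness; your write-up is in effect a more careful unpacking of the first citation. In particular, you make explicit the one point the paper leaves implicit: that starting from Daly's word (where $s_i$ occurs twice, $s_{i+1}$ once, and nothing else repeats), the single braid move swaps those two multiplicities and the subsequent commutation moves leave the multiset of letters untouched, so the resulting word really has \emph{no} repetition outside the factor $s_{i+2}s_{i+1}s_{i-1}s_is_{i+1}$, not merely ``no repetition of $s_j$ for $j\neq i,i+1$'' as Lemma~\ref{L:existence1} literally states. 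Your handling of the potential obstructions $s_{i-2}$ and $s_{i+3}$ is also correct and matches the logic inside the proof of Lemma~\ref{L:existence1}. You do not invoke Lemma~\ref{L:uniqueness1}; that lemma establishes the uniqueness of the index~$i$, which the paper needs later for the well-definedness of the map $\Psi$ but which is not part of the proposition as stated, so its omission here is harmless.
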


\begin{proof}
The existence of the factor $s_{i+2}s_{i+1}s_{i-1}s_{i}s_{i+1}$ is proved in Lemma~\ref{L:existence1}.
The uniqueness of such an expression is proved in Lemma~\ref{L:uniqueness1}.
Hence, the proof of our proposition follows. 
\end{proof}

We proceed to construct a map, 
\begin{align*}
\Psi: \mathcal{M}_{n+1} \rightarrow \mathcal{B}_n.
\end{align*}
Let $v\in \mathcal{M}_{n+1}$. 
By Proposition~\ref{P:uniquereducedword}, $v$ has a unique reduced word $\underline{v}$ containing the word $s_{i+2}s_{i+1}s_{i-1}s_{i}s_{i+1}$ 
(for some $i\in \{2,\dots, n-3\}$) as a factor and $\underline{v}$ has no other repeated simple transpositions. 
Let us write $\underline{v}$ as follows: 
\begin{align*}
\underline{v} = s_{j_1} s_{j_2 } \cdots s_{j_m} s_{j_{m+1}} s_{j_{m+2}} s_{j_{m+3}} s_{j_{m+4}} \cdots s_{j_r},
\end{align*}
where $s_{j_m} s_{j_{m+1}} s_{j_{m+2}} s_{j_{m+3}} s_{j_{m+4}} = s_{i+2}s_{i+1}s_{i-1}s_{i}s_{i+1}$. 
We have essentially three distinct cases to consider:
\begin{enumerate}
\item[(1)] $s_{i+2} = s_n$,
\item[(2)] $s_{i+2} \neq s_n$ but $s_n$ is an element of $\{s_{j_1},\dots, s_{j_r}\} \setminus \{ s_{j_{m}},\dots, s_{j_{m+4}}\}$, 
\item[(3)] $s_n$ is not an element of $\{s_{j_1},\dots, s_{j_r}\}$.
\end{enumerate}
In each of these three cases, we will construct an appropriate element of $\mathcal{B}_n$. 
\medskip

\noindent Case (1). 

In this case, we have $s_{j_m} s_{j_{m+1}} s_{j_{m+2}} s_{j_{m+3}} s_{j_{m+4}} = s_ns_{n-1}s_{n-3}s_{n-2}s_{n-1}$.
Then, we set 
\begin{align}\label{A:case1}
\Psi(v) := s_{j_1} s_{j_2 } \cdots s_{j_{m-1}} \underbrace{s_{j_m} s_{j_{m+1}} s_{j_{m+2}} s_{j_{m+3}} s_{j_{m+4}}}_{\text{replaced with 
$s_{n-2} s_{n-1} s_{n-3} s_{n-2}$}} s_{j_{m+5}} \cdots s_{j_r}.
\end{align}
Here, since we removed the only occurrence of $s_n$ from $\underline{v}$, we obtain an element of $\mathbf{S}_n$. 
Also, since $s_{n-2} s_{n-1} s_{n-3} s_{n-2}$ appears as a factor of $\Psi(v)$, and no other simple transposition has a repetition in $\Psi(v)$, 
we see that $\Psi(v)\in \mathcal{B}_n$. 
\medskip

\noindent Case (2). 

Assume that $s_n$ is the $d$-th factor $s_{j_d}$ in $\underline{v}$. 
Without loss of generality we assume that $m+4<d$. 
Now, we first define $\underline{w}_0$ as in (\ref{A:case1}), and then replace $s_{j_d}$ in $\underline{w}_0$ by $s_{i+2}$ to define our element $\underline{w}:=\Psi(v)$:
\begin{align}\label{A:case2}
\Psi(v) := s_{j_1} s_{j_2 } \cdots s_{j_{m-1}} \underbrace{s_{j_m} s_{j_{m+1}} s_{j_{m+2}} s_{j_{m+3}} s_{j_{m+4}}}_{\text{replaced with $s_{i} s_{i+1} s_{i-1} s_{i}$}}
 s_{j_{m+5}}\cdots \underbrace{s_{j_d}}_{s_{i+2}} \cdots s_{j_r}.
\end{align}
Notice that since $s_n$ does not appear in $\underline{w}$, we have $\underline{w} \in \mathbf{S}_n$. 
Furthermore, $s_{i} s_{i+1} s_{i-1} s_{i}$ appears as a factor of $\underline{w}$ and no other simple transposition has a repetition in $\underline{w}$.
Hence, we see that $\underline{w}\in \mathcal{B}_n$. 
\medskip

\noindent Case (3). 

This is similar to case (1).  
We replace the factor $s_{i+2}s_{i+1}s_{i-1}s_{i}s_{i+1}$ in $\underline{v}$ by $s_{i} s_{i+1} s_{i-1} s_{i}$, 
\begin{align}\label{A:case3}
\Psi(v) := s_{j_1} s_{j_2 } \cdots s_{j_{m-1}} \underbrace{s_{j_m} s_{j_{m+1}} s_{j_{m+2}} s_{j_{m+3}} s_{j_{m+4}}}_{s_{i} s_{i+1} s_{i-1} s_{i}}
 s_{j_{m+5}} \cdots s_{j_r}.
\end{align}
Since $s_n$ is not a factor of $\underline{v}$, $\Psi(v)$ is automatically an element of $\mathbf{S}_n$. 
At the same time, $s_{i} s_{i+1} s_{i-1} s_{i}$ is a factor of $\Psi(v)$ and no simple transposition is repeated.
Hence, we have $\Psi(v)\in \mathcal{B}_n$. 
\medskip

\begin{Theorem}\label{T:RecurrenceML}
The map $\Psi$ defined above is a bijection between $\mathcal{M}_{n+1}$ and $\mathcal{B}_n$.
\end{Theorem}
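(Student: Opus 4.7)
The plan is to prove Theorem~\ref{T:RecurrenceML} by constructing an explicit inverse $\Phi : \mathcal{B}_n \to \mathcal{M}_{n+1}$ whose three-case structure mirrors the definition of $\Psi$. This is modeled on Daly's analogous bijection $\mathcal{B}_{n+1} \to \mathcal{A}_n$ from \cite[Theorem 2.8]{Daly2010}. The key observation is that the three cases of $\Psi$ each leave a distinguishable fingerprint on the output $\underline{w}$: Case (1) is the only case yielding the maximal index $k = n-2$ in the distinguished 4-factor $s_k s_{k+1} s_{k-1} s_k$ (because only then is $s_n$ erased entirely), Case (2) leaves a single occurrence of $s_{k+2}$ somewhere outside the 4-factor, and Case (3) leaves $s_{k+2}$ absent from the entire word. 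Hence the correct case can be recovered from $w$ alone.

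Concretely, let $w \in \mathcal{B}_n$, and let $\underline{w}$ be the unique reduced word of $w$ containing the factor $s_k s_{k+1} s_{k-1} s_k$ (for some $k \in \{2,\dots,n-2\}$) with no other repetitions, as guaranteed by Daly's criterion \cite[Proposition 5.2]{Daly2013}. I would define $\Phi(w)$ by cases: if $k = n-2$, replace the 4-factor in $\underline{w}$ by $s_n s_{n-1} s_{n-3} s_{n-2} s_{n-1}$; if $k < n-2$ and $s_{k+2}$ occurs in $\underline{w}$, replace that unique occurrence of $s_{k+2}$ by $s_n$ and simultaneously replace the 4-factor by $s_{k+2} s_{k+1} s_{k-1} s_k s_{k+1}$; if $k < n-2$ and $s_{k+2}$ does not occur in $\underline{w}$, replace only the 4-factor by $s_{k+2} s_{k+1} s_{k-1} s_k s_{k+1}$.

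The proof then consists of four steps. Step 1: show that $\Psi(v) \in \mathcal{B}_n$ for every $v \in \mathcal{M}_{n+1}$ by verifying in each of the three defining cases that the output is a reduced word of length $\ell(v)-1$ lying in $\mathbf{S}_n$ and displaying the factor $s_k s_{k+1} s_{k-1} s_k$ with no additional repetition; then appeal to \cite[Proposition 5.2]{Daly2013}. Step 2: verify $\Phi(w) \in \mathcal{M}_{n+1}$ by checking in each of the three cases that the output is a reduced word of length $\ell(w)+1$ in $\mathbf{S}_{n+1}$ displaying the factor $s_{k+2} s_{k+1} s_{k-1} s_k s_{k+1}$ with no other repetitions, and then appeal to Proposition~\ref{P:uniquereducedword}. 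Step 3: check that the case-indicating data is preserved, so that Case $(\Psi_i)$ always produces outputs satisfying the defining condition of Case $(\Phi_i)$, and vice versa. Step 4: within each matched pair of cases the operations are literal inverse string rewrites, so $\Phi \circ \Psi = \mathrm{id}_{\mathcal{M}_{n+1}}$ and $\Psi \circ \Phi = \mathrm{id}_{\mathcal{B}_n}$ follow immediately.

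The main obstacle I expect is Step 2, and in particular Case $(\Phi_2)$: after swapping the distant $s_{k+2}$ with $s_n$ and expanding the 4-factor into the 5-factor, one must rule out the possibility that the transformed word secretly reduces to a shorter expression, or that a previously harmless simple transposition becomes a second repetition. The length count $\ell(\Phi(w)) = \ell(w)+1$ follows from the substitution bookkeeping, but genuine reducedness must be secured by combining the uniqueness portion of Proposition~\ref{P:uniquereducedword} with the Exchange Property argument used in the proof of Lemma~\ref{L:uniqueness1}; together these force the new 5-factor to be the unique distinguished factor of the resulting reduced word and prevent any other simple transposition from appearing twice.
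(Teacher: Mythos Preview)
Your approach is essentially the same as the paper's, merely repackaged: the paper proves injectivity by a pairwise case analysis and surjectivity by constructing explicit preimages, whereas you bundle the preimage construction into a named inverse $\Phi$ and verify both compositions. Your three cases for $\Phi$ coincide verbatim with the paper's three preimage constructions in its surjectivity argument, and your ``fingerprint'' observation (Step~3) is exactly the content of the paper's injectivity case analysis, stated more cleanly. One small caution: the reduced word $\underline{w}$ containing the distinguished $4$-factor is not literally unique (commuting generators can be reordered), only the index $k$ is; both you and the paper elide this, and it does not affect the argument once one checks that $\Psi$ and $\Phi$ are independent of the choice at the level of permutations.
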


\begin{proof}
The fact that $\Psi$ is well-defined is obvious from the construction. 
We proceed to show that $\Psi$ is a bijection.

We begin with the proof of the injective property of $\Psi$. 
Let $v=s_{i_1}\cdots s_{i_r}$ and $v' = s_{i_1'}\cdots s_{i_r'}$ be two elements from $\mathcal{M}_{n+1}$ such that 
\begin{align*}
\Psi(v) = \Psi(v') = s_{t_1}s_{t_2}\cdots s_{t_{r-1}}.
\end{align*}
We will show that $v=v'$.
If both $v$ and $v'$ are as in (1), then it is clearly true that every factor of $v$ and $v'$ are the same. 
In particular, we have $v=v'$. 
The proofs of the following two cases are similar: either both $v$ and $v'$ are as in (2) or both of them are as in (3), 
then $\Psi(v)=\Psi(v')$ implies $v=v'$. 

Let us assume that $v$ is as in (1) and $v'$ is as in (2). 
We will show that this case can not occur. 
Notice that if $v$ is as in (1), then we have $i+2=n$. 
Hence, there exists $m\in \{1,\dots, r-3\}$ such that the factor $s_{t_m}s_{t_{m+1}} s_{t_{m+2}} s_{t_{m+3}}$ of $\Psi(v)$ is given by 
$s_{t_m}s_{t_{m+1}} s_{t_{m+2}} s_{t_{m+3}}= s_{n-2}s_{n-1}s_{n-3}s_{n-2}$.
But since $v'$ is as in (2), we know that the segment $s_{i_m'}s_{i_{m+1}'} s_{i_{m+2}'} s_{i_{m+3}'} s_{i_{m+4}'}$ in $\underline{v'}$ does not 
contain $s_n$. In particular, we know that $s_{i_{m+2}'} \neq s_{n-1}$. 
This means that the product $s_{n-2}s_{n-1}s_{n-3}s_{n-2}$ is not a factor of $\Psi(v')$ to begin with. 
In other words, $\Psi(v')$ cannot be equal to $\Psi(v)$ to begin with.

Let us assume that $v$ is as in (1) and $v'$ is as in (3). 
We will show that this case can not occur as well.  
As before, since $v$ is as in (1), we have $i+3=n$. 
Hence, there exists $m\in \{1,\dots, r-3\}$ such that the factor $s_{t_m}s_{t_{m+1}} s_{t_{m+2}} s_{t_{m+3}}$ of $\Psi(v)$ is given by $s_{t_m}s_{t_{m+1}} s_{t_{m+2}} s_{t_{m+3}}= s_{n-2}s_{n-1}s_{n-3}s_{n-2}$.
Since $v'$ is as in (3), we know that the segment $s_{i_m'}s_{i_{m+1}'} s_{i_{m+2}'} s_{i_{m+3}'} s_{i_{m+4}'}$ in $\underline{v'}$ does not 
contain $s_n$. 
Hence, the factor $s_{i_m'}s_{i_{m+1}'} s_{i_{m+2}'} s_{i_{m+3}'} s_{i_{m+4}'}$ of $\underline{v}'$ is replaced by the factor $s_{i_{m+1}'} s_{i_m'} s_{i_{m+3}'}s_{i_{m+4}'}$
to get $\Psi(v')$, we see that $s_{i_{m+1}'} =s_{n-1}$. 
But this implies that $s_{i_m'} = s_n$, contradicting the fact that $v'$ is as in (3). 
It follows that $\Psi(v')= \Psi(v)$ is not possible for such $v$ and $v'$.

The remaining case is when $v$ is as in (2) and $v'$ is as in (3).
In this case, in order for $\Psi(v)$ to be equal to $\Psi(v')$, the factor $s_{i_d}'$ in $\underline{v}'$ must be equal to $s_{i+2}$ to begin with.
But this is not possible since $v'$ is as in (3), hence, the factor $s_{t_m'}s_{t_{m+1}'} s_{t_{m+2}'} s_{t_{m+3'}} s_{t_{m+4}'}$ of $\underline{v}'$ 
is given by $s_{i+2}s_{i+1}s_{i-1}s_{i}s_{i+1}$, contradicting with our no-repetition assumption on the elements of $\mathcal{M}_{n+1}$.

We proceed to show that $\Psi$ is surjective. 

Let $s_{t_1}s_{t_2}\cdots s_{t_{r-1}}$ be an element of $\mathcal{B}_n$. 
Then there exists $m\in \{1,\dots, r-4\}$ such that $s_{t_m}s_{t_{m+1}}s_{t_{m+2}}s_{t_{m+3}} = s_i s_{i+1} s_{i-1} s_i$ for some $i\in \{2,\dots, n-2\}$.
We have three cases. 

If $i=n-2$, then we define $v:=s_{j_1}\cdots s_{j_r}$ by setting 
\begin{align*}
s_{j_x}:=
\begin{cases}
s_{t_x} & \text{ for $x\in \{1,\dots, m-1\}$}, \\
s_{t_{x-1}} & \text{ for $x\in \{m+5,\dots, r\}$},
\end{cases}
\end{align*}
and $s_{j_m}s_{j_{m+1}}s_{j_{m+2}}s_{j_{m+3}} s_{j_{m+4}} := s_ns_{n-1}s_{n-3}s_{n-2}s_{n-1}$.
Clearly, this element $\underline{v}$ is an element of $\mathcal{M}_{n+1}$ as in (1) and $\Psi(v) = s_{t_1}s_{t_2}\cdots s_{t_{r-1}}$.

Next, let us assume that $i< n-2$ and $s_{i+2}=s_{t_d}$ for some $d\in \{1,\dots, r-1\} \setminus \{m,m+1,m+2,m+3\}$.
Then we define $v:=s_{j_1}\cdots s_{j_r}\in \mathcal{M}_{n+1}$ as follows.
First, we replace $s_{t_d}$ with $s_n$ in $s_{t_1}s_{t_2}\cdots s_{t_{r-1}}$.
Then we replace the factor $s_{t_m}s_{t_{m+1}}s_{t_{m+2}}s_{t_{m+3}}$ with $s_{i+2} s_{i+1}s_{i-1}s_{i}s_{i+1}$.
It is easy to check that $v$ is an element of $\mathcal{M}_{n+1}$ as in (2) and that $\Psi(v) = s_{t_1}s_{t_2}\cdots s_{t_{r-1}}$.

Finally, let us assume that $i<n-2$ and $s_{i+2}$ does not appear in $\{s_{t_1},\dots, s_{t_{r-1}}\}$. 
Then we define $v:=s_{j_1}\cdots s_{j_r}\in \mathcal{M}_{n+1}$ by replacing the factor $s_{t_m}s_{t_{m+1}}s_{t_{m+2}}s_{t_{m+3}}$ with $s_{i+2} s_{i+1}s_{i-1}s_{i}s_{i+1}$.
It is straightforward to check that $v$ is an element of $\mathcal{M}_{n+1}$ as in (3) and that $\Psi(v) = s_{t_1}s_{t_2}\cdots s_{t_{r-1}}$.

Hence, the map $\Psi$ is onto.
This finishes the proof of the fact that $\Psi$ is a bijection.
\end{proof}

\begin{Example}
Let $v$ denote the permutation given by $v=2\,6\,3\,1\,4\,7\,8\,5$ in one-line notation. 
It is easy to see that $v$ is an element of $\mathcal{M}_8$. 
In particular, there is a reduced word $\underline{v}$ of $v$ containing $s_{i+2}s_{i+1}s_{i-1}s_is_{i+1}$ for some $i \in \{2,3,4,5\}$ by Proposition~\ref{P:uniquereducedword}.
For example, if $i=2$, then the set of all reduced words of $v$ containing $s_4s_3s_1s_2s_3$ as a subexpression is given by 
\begin{align*}
\{ s_5\textcolor{red}{s_4s_3s_1s_2s_3}s_6s_7, s_5s_6\textcolor{red}{s_4s_3s_1s_2s_3}s_7, s_5s_6s_7\textcolor{red}{s_4s_3s_1s_2s_3}\}.
\end{align*}
Let us work with the following element:
\begin{align*}
\underline{v}:=s_5s_6s_7\textcolor{red}{s_4s_3s_1s_2s_3} = s_{j_1}s_{j_2}s_{j_3}\textcolor{red}{s_{j_4}s_{j_5}s_{j_6}s_{j_7}s_{j_8}}.
\end{align*}
Since $s_{j_4}\neq s_7$ and $s_7$ is in $\{s_{j_1},...,s_{j_8}\}\setminus \{s_{j_4},...,s_{j_8}\}$, the image of $\underline{v}$ under the map $\Psi$ is found by applying Case (2) as in (\ref{A:case2}),
\begin{align*}
\Psi(\underline{v})=s_5s_6s_4 \textcolor{blue}{s_2s_3s_1s_2}.
\end{align*} 
More specifically, we find that $\Psi(\underline{v})=3\,6\,1\,2\,4\,7\,5 \in \mathcal{B}_7$.
\end{Example}

\begin{Corollary}\label{C:RecurrenceML}
Let $n\geq 3$. 
Then we have $| \mathcal{A}_n | = |\mathcal{M}_{n+2}|$. 
\end{Corollary}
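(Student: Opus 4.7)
The plan is to obtain the equality by chaining two bijections that are already at our disposal. First, Theorem~\ref{T:RecurrenceML} constructs an explicit bijection $\Psi : \mathcal{M}_{n+1} \to \mathcal{B}_n$ for every relevant $n$. Applying this theorem with $n$ replaced by $n+1$ gives a bijection $\Psi : \mathcal{M}_{n+2} \to \mathcal{B}_{n+1}$, whence $|\mathcal{M}_{n+2}| = |\mathcal{B}_{n+1}|$. Second, Daly's bijection from~\cite[Theorem 2.8]{Daly2010}, already cited in the paragraph preceding the definition of $\mathcal{M}_n$, provides a bijection between $\mathcal{B}_{n+1}$ and $\mathcal{A}_n$, so $|\mathcal{B}_{n+1}| = |\mathcal{A}_n|$. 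Combining these two equalities yields the desired identity $|\mathcal{A}_n| = |\mathcal{M}_{n+2}|$.

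The only thing to check is that the numerical ranges match up: for Theorem~\ref{T:RecurrenceML} with the shifted index, we need the factor index $i$ to lie in $\{2,\dots,(n+2)-3\} = \{2,\dots,n-1\}$, which is nonempty precisely when $n \geq 3$, matching the hypothesis of the corollary. Daly's bijection likewise applies in this range, since the factor $s_i s_{i+1} s_{i-1} s_i$ used in the definition of $\mathcal{B}_{n+1}$ requires $i \in \{2,\dots,n-1\}$, again meaningful for $n \geq 3$.

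I expect essentially no obstacle: the corollary is a direct composition of two previously established bijections, so the write-up can be done in a couple of sentences. The only mild care needed is in citing the correct index shift when invoking Theorem~\ref{T:RecurrenceML}, and in noting that the hypothesis $n \geq 3$ guarantees both bijections are applicable.
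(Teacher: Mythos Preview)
Your proposal is correct and matches the paper's own proof essentially verbatim: the paper simply invokes Theorem~\ref{T:RecurrenceML} together with Daly's result $|\mathcal{B}_{n+1}| = |\mathcal{A}_n|$ from~\cite[Theorem 2.8]{Daly2010}. Your additional remark about the index ranges being compatible with the hypothesis $n\geq 3$ is a nice sanity check that the paper omits.
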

\begin{proof}
The proof is a direct consequence of Theorem~\ref{T:RecurrenceML} and Daly's result~\cite[Theorem 2.8]{Daly2010} that $|\mathcal{B}_{n+1}| = |\mathcal{A}_n|$.
\end{proof}

\section{Nearly Toric Schubert Varieties and Singularities}\label{S:T1}

This section is about the relationship between smoothness and sphericalness properties of Schubert varieties with $T$-complexity one.
In particular, we prove the two parts of our Theorem~\ref{intro:T1} from the introduction section. 

\begin{Lemma}\label{L:cT=1cB=0}
Let $X_w$ be a singular Schubert variety in $Fl(n,\C)$. 
If $c_T(X_w) = 1$, then $c_B(X_w) = 0$. In other words, the singular Schubert varieties of $T$-complexity one are spherical.
\end{Lemma}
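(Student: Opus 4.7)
The plan is to combine the Lee--Masuda--Park characterization of singular $T$-complexity one Schubert varieties with the Gaetz pattern avoidance criterion for spherical Schubert varieties, and to verify a containment at the level of permutation patterns.

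First, by~\cite[Theorem 1.3]{LeeMasudaPark} recalled above, the hypothesis that $X_w$ is singular with $c_T(X_w)=1$ is equivalent to $w$ containing the pattern $3412$ exactly once and avoiding the pattern $321$. Second, by the pattern avoidance criterion of~\cite{Gaetz}, the condition $c_B(X_w)=0$ (i.e.\ $X_w$ is a spherical Schubert variety) is equivalent to $w$ avoiding every pattern in the set $\mathscr{P}$ of $21$ permutations displayed in~\eqref{A:Gaetz}. Therefore the lemma reduces to the purely combinatorial claim: every $w \in \mathbf{S}_n$ that contains $3412$ exactly once and avoids $321$ must avoid each of the $21$ patterns in $\mathscr{P}$.

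To establish this reduction, I plan to inspect the patterns in $\mathscr{P}$ one at a time and show that each of them either contains the pattern $321$, or else contains at least two (in fact three) distinct occurrences of the pattern $3412$. Concretely, a quick triple-by-triple check shows that $19$ of the $21$ patterns, namely everything in $\mathscr{P}$ except $34512$ and $45123$, already contain a decreasing subsequence of length three and hence realize the pattern $321$; for any such pattern $\sigma$, a $\sigma$-occurrence in $w$ would force a $321$-occurrence in $w$, contradicting the avoidance hypothesis. For the two remaining patterns $34512$ and $45123$, I would enumerate the four-element subsequences of relative order $3412$ directly: in $34512$ the triples of positions $(1,2,4,5)$, $(1,3,4,5)$, $(2,3,4,5)$ each give a $3412$-occurrence, and similarly in $45123$ the positions $(1,2,3,4)$, $(1,2,3,5)$, $(1,2,4,5)$ do. Hence any $w$ containing $34512$ or $45123$ as a pattern would contain $3412$ at least three times, contradicting the ``exactly once'' hypothesis.

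Combining the two cases, no pattern of $\mathscr{P}$ can occur in $w$, so the Gaetz criterion yields $c_B(X_w)=0$. The only real obstacle is the finite case-analysis of the two $321$-avoiding patterns $34512$ and $45123$; everything else is a routine inspection of which of the $\binom{5}{3}=10$ position triples in a $5$-pattern forms a decreasing subsequence. No additional geometric input is needed beyond the two cited theorems.
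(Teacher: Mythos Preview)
Your argument is correct and follows essentially the same strategy as the paper: split $\mathscr{P}$ into the patterns containing a $321$ occurrence (which are ruled out by $321$-avoidance) and the remaining $321$-avoiding patterns (which are ruled out because they carry more than one $3412$ occurrence), then invoke the Lee--Masuda--Park and Gaetz criteria. In fact your split is slightly sharper than the paper's: the paper leaves $35412$ in the second group and checks it for multiple $3412$ occurrences, whereas you correctly observe that $35412$ already contains the $321$ pattern at positions $2,3,4$ (values $5,4,1$), so only $34512$ and $45123$ genuinely require the ``at least two $3412$'' verification.
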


\begin{proof} 
   In Figure~\ref{F:fig:3}, we list the permutations $v\in \mathscr{P}$ that contains the pattern 321 at least once.
   This means that if $w\in \mathbf{S}_n$ avoids the pattern 321, then it will avoid $v$. 
   \begin{figure}[htp]
    \centering
    \begin{tabular}{|c|c|c|c|c|c|c|} \hhline{|=|=|=|=|=|=|=|}
    24531 & 25314  & 25341 &34521 &35421 &42531 &45213 \\\hline
         431& 531 &531 &321 &321 &421 &421 \\\hhline{|=|=|=|=|=|=|=|} 
   45231 & 45312 & 52314 & 52341 & 53124 &53142 &53412 \\\hline
421 &431 & 521 & 521 & 531 &531 &531  \\ \hhline{|=|=|=|=|=|=|=|}
 53421& 54123& 54213 &54231 \\ \hhline{|-|-|-|-|}
  532& 541  &542 &542 \\ \hhline{|=|=|=|=|}
    \end{tabular}
    \caption{The occurrences of the pattern $321$ in the elements of $\mathscr{P}$.}
   \label{F:fig:3}
\end{figure}
   In Figure~\ref{F:fig:4}, we look at the elements $u\in \mathscr{P}$ that do not appear in Figure~\ref{F:fig:3}.
   It turns out that all of these permutations contain the pattern 3412 at least twice. 
\begin{figure}[htp]
    \centering
    \begin{tabular}{|c|c|c|}\hhline{|=|=|=|}
        34512 &  35412 & 45123 \\\hhline{|=|=|=|}
        3412 & 3412 &4512 \\
         4512  & 3512  &4523 \\\hline
    \end{tabular}
    \caption{The pattern $3412$ appears at least twice.}
    \label{F:fig:4}
\end{figure}

It follows from these observations and~\cite[Theorem 1.2]{LeeMasudaPark}, which is stated in Section~\ref{S:Preliminaries}, 
 that if a permutation contains the pattern 3412 exactly once and avoids the pattern 321,
then it avoids all of the elements of $\mathscr{P}$.
Hence, our assertion follows from the pattern avoidance criterion of spherical Schubert varieties.
\end{proof}

\begin{Example}\label{ex:0.4.3}
Let $X_{w}$ be a singular nearly toric Schubert variety in $Fl(5,\C)$.
Then $w$ is an element of the following set:
\begin{align*}
 \left\{ \begin{matrix}
12543, & 13542, & 14325, &14352, & 21543, & 23541, \\
24315, & 24351, & 32145, & 32154, & 32415, & 32451
    \end{matrix} \right\}.
\end{align*}
\end{Example}

We are ready to prove the first part of our first theorem from the introduction section. 
\begin{Theorem}\label{T:singulariff}
Let $n\geq 1$. 
Let $X_w$ be a Schubert variety in $Fl(n,\C)$ such that $c_T(X_w) = 1$. 
Then $X_w$ is a singular spherical Schubert variety if and only if $w$ contains the pattern 3412 exactly once and avoids the pattern 321. 
\end{Theorem}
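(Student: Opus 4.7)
The plan is to derive Theorem~\ref{T:singulariff} as a direct consequence of two tools already on the table: the Lee--Masuda--Park classification of singular $T$-complexity one Schubert varieties (their Theorem 1.3, recalled in Section~\ref{S:Preliminaries}), and Lemma~\ref{L:cT=1cB=0} of the current section, which upgrades the singular $T$-complexity one condition to sphericalness. Since every Schubert variety is normal (\cite{Seshadri1984}), the three defining conditions for a nearly toric Schubert variety reduce to $c_T(X_w)=1$ and $c_B(X_w)=0$, so it suffices to handle these.

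For the forward implication, I would assume $X_w$ is a singular spherical Schubert variety with $c_T(X_w)=1$. The hypothesis $c_T(X_w)=1$ together with singularity places $w$ in the equivalence of Lee--Masuda--Park's Theorem 1.3, so we immediately conclude that $w$ contains 3412 exactly once and avoids 321. No use of the spherical hypothesis is even needed in this direction; the $T$-complexity one and singularity conditions alone force the pattern condition.

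For the reverse implication, suppose $w$ contains 3412 exactly once and avoids 321. Invoking Lee--Masuda--Park's Theorem 1.3 in the other direction gives that $X_w$ is singular and has $c_T(X_w)=1$. Applying Lemma~\ref{L:cT=1cB=0} to the singular variety $X_w$ with $c_T(X_w)=1$ produces $c_B(X_w)=0$, so $X_w$ is spherical. Combined with normality, $X_w$ is therefore a singular nearly toric Schubert variety.

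There is no genuine obstacle here: the theorem is essentially a repackaging of Lee--Masuda--Park's pattern criterion plus Lemma~\ref{L:cT=1cB=0}. The only subtlety worth making explicit in the write-up is that the hypothesis $c_T(X_w)=1$ appears in the statement of the theorem itself, so in the $(\Leftarrow)$ direction one must be careful that the pattern conditions do imply $c_T(X_w)=1$ (which is precisely what Lee--Masuda--Park provides), and in the $(\Rightarrow)$ direction that $c_T(X_w)=1$ together with singularity (not just $c_T(X_w)=1$ alone) is what triggers the pattern characterization.
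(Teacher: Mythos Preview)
Your proposal is correct and matches the paper's proof almost exactly: both directions hinge on Lee--Masuda--Park's Theorem~1.3 together with Lemma~\ref{L:cT=1cB=0}. The only cosmetic difference is that in the $(\Leftarrow)$ direction the paper invokes the Lakshmibai--Sandhya criterion to deduce singularity from the presence of a 3412 pattern, whereas you extract singularity (along with the redundant $c_T(X_w)=1$) from Lee--Masuda--Park; either citation works.
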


\begin{proof}
If $X_w$ is a singular $T$-complexity one Schubert variety, then our claim follows from ~\cite[Theorem 1.3]{LeeMasudaPark} that is mentioned in the preliminaries section. Conversely, if $w$ contains the pattern 3412, then $X_w$ is singular by the Lakshmibai-Sandhya criterion for smoothness~\cite{LakshmibaiSandhya}. 
Then Lemma~\ref{L:cT=1cB=0} shows that $X_w$ is spherical. This finishes the proof of our theorem. 
\end{proof}

Let $X_w$ be Schubert variety in $Fl(n,\C)$. 
We assume that $c_T(X_w) = 1$ and $X_w$ is smooth.
Our goal is to determine when $X_w$ is a spherical variety. 

\begin{Theorem}\label{T:smoothNRT}
A smooth Schubert variety $X_{w}$ such that $c_T(X_w)=1$ is non-spherical if and only if 
$w$ is $3412$-avoiding, contains the pattern $321$ exactly once, and it contains the pattern $25314$. 
\end{Theorem}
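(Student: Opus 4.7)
The plan is to combine the pattern description of smooth $c_T=1$ Schubert varieties due to Lee, Masuda and Park with the twenty-one-pattern sphericity criterion of Gaetz. By~\cite[Theorem~1.2]{LeeMasudaPark}, a smooth Schubert variety $X_w$ with $c_T(X_w)=1$ is exactly one whose permutation $w$ avoids $3412$ and contains $321$ exactly once; that is, $w\in \mathcal{A}_n$. Thus two of the three conditions in the statement are automatic under the hypothesis, and only the assertion ``$w$ contains $25314$'' carries non-trivial content. In view of the pattern avoidance criterion~(\ref{A:Gaetz}), $X_w$ is non-spherical if and only if $w$ contains at least one of the twenty-one patterns in $\mathscr{P}$.

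The $(\Leftarrow)$ direction is immediate from the fact that $25314 \in \mathscr{P}$. For the $(\Rightarrow)$ direction, assume $w$ contains some $p \in \mathscr{P}$; I will show that $p$ must equal $25314$. The key claim is that every $p \in \mathscr{P} \setminus \{25314\}$ either contains the pattern $3412$ or contains the pattern $321$ at least twice. Since pattern containment is transitive, either alternative would force $w$ to contain $3412$ or to contain $321$ at least twice, both of which contradict $w \in \mathcal{A}_n$. The only remaining possibility is $p = 25314$, and hence $w$ contains $25314$, as required.

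To verify the key claim, reuse the decomposition of $\mathscr{P}$ already drawn up in the proof of Lemma~\ref{L:cT=1cB=0}. The three permutations $34512$, $35412$, $45123$ of Figure~\ref{F:fig:4} were verified there to contain $3412$, and so they are ruled out directly from $w$ avoiding $3412$. For the eighteen permutations listed in Figure~\ref{F:fig:3}, one enumerates decreasing triples of values and checks that every one of them \emph{except} $25314$ contains the pattern $321$ at least twice; for instance $42531$ has decreasing triples of values at positions $(1,2,5)$, $(1,4,5)$ and $(3,4,5)$, and the remaining sixteen cases are handled by analogous short inspections. For $25314$ itself, the only decreasing triple of positions is $(2,3,4)$ with values $5,3,1$, so $25314$ contains $321$ exactly once, and a direct scan of its five $4$-element subsequences shows that it contains no $3412$ pattern. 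Consequently, $25314$ is the unique element of $\mathscr{P}$ compatible with the constraints defining $\mathcal{A}_n$.

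The main obstacle is simply the routine bookkeeping of enumerating the $321$-occurrences in the seventeen remaining permutations of Figure~\ref{F:fig:3}. This is purely mechanical, but must be done carefully so that no element of $\mathscr{P} \setminus \{25314\}$ is inadvertently left open as a possibility for $p$.
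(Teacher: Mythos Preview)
Your proof is correct and follows essentially the same approach as the paper: reduce the theorem to the equivalence ``non-spherical $\Leftrightarrow$ contains $25314$'' via Lee--Masuda--Park, then eliminate the other twenty patterns in $\mathscr{P}$ by showing each is incompatible with the constraints on $w$. The only cosmetic difference is that the paper also invokes $4231$-avoidance (from Lakshmibai--Sandhya smoothness) to discard more of $\mathscr{P}$ up front, leaving only seven patterns to test for multiple $321$-occurrences, whereas you skip the $4231$ step and instead verify the ``at least two $321$'s'' claim for all seventeen remaining patterns directly; both routes amount to the same case check.
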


\begin{proof}
Let $X_w$ be a smooth Schubert variety such that $c_T(X_w) = 1$. 
Then we know that 
\begin{enumerate}
\item[(1)] $w$ avoids both 3412 and 4231 patterns, 
\item[(2)] $w$ contains exactly one 321 pattern. 
\end{enumerate}
Comparing it with the patterns that determine the spherical variety property of $X_w$, we see 
that the first condition (1) automatically implies that $w$ avoids the box-indicated entries from the following array of patterns: 
\begin{align*}
  \begin{bmatrix}
    24531 & 25314 &  \boxed{25341} &  \boxed{34512} & 34521 &  \boxed{35412} &35421 \\
    \boxed{42531} &  \boxed{45123} &  \boxed{45213} &  \boxed{45231} &  \boxed{45312} &  \boxed{52314} &  \boxed{52341}\\
    53124 &  \boxed{53142} &  \boxed{53412} &  \boxed{53421} & 54123 & 54213 &  \boxed{54231}
    \end{bmatrix}
\end{align*}
In other words, a smooth Schubert variety $X_w$ is spherical if and only if $w$ avoids the remaining patterns which we included in the following set:
\begin{align*}
    \{ 24531,  25314, 34521, 35421, 53124,  54123, 54213 \}.
\end{align*}
But each of these seven patterns, except 25314, has more than one occurrences of 321 pattern in them. 
In other words, if $w$ satisfies conditions (1) and (2) above, then $w$ automatically avoids the box-indicated patterns from the following set:
\begin{align*}
    \{  \boxed{24531},  25314,  \boxed{34521},  \boxed{35421},  \boxed{53124},   \boxed{54123},  \boxed{54213} \}.
\end{align*}
Hence, a smooth Schubert variety $X_w$ which is of $T$-complexity 1 is non-spherical if and only if it contains the pattern 25314.
This finishes the proof of our theorem.
\end{proof}

The proof of the following corollary directly follows from the definition of $\mathcal{M}_n$.

\begin{Corollary}
The set $\mathcal{M}_n$ parametrizes the smooth Schubert varieties $X_w$ in $Fl(n,\C)$ such that $c_T(X_w)=1$ and $X_w$ is non-spherical. 
\end{Corollary}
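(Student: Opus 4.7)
The plan is simply to read off the corollary as a conjunction of two previously established biconditionals, one from the preliminaries (the Lee--Masuda--Park smoothness criterion) and one from Theorem~\ref{T:smoothNRT}, matched against the three defining conditions of $\mathcal{M}_n$. There is no real obstacle; the only task is to be precise about which conditions come from which result, since the definition of $\mathcal{M}_n$ is a three-part conjunction and the two theorems together supply exactly those three parts.

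First I would recall that, by definition,
\begin{align*}
\mathcal{M}_n = \{\, w \in \mathbf{S}_n \mid w \text{ avoids } 3412,\ w \text{ contains } 321 \text{ exactly once, and } w \text{ contains } 25314\,\}.
\end{align*}
Then I would cite the equivalence established by Lee, Masuda, and Park in~\cite[Theorem 1.2]{LeeMasudaPark} and recorded in Section~\ref{S:Preliminaries}: a Schubert variety $X_w$ in $Fl(n,\C)$ is smooth with $c_T(X_w)=1$ if and only if $w$ contains the pattern $321$ exactly once and avoids the pattern $3412$. Thus the first two defining conditions of $\mathcal{M}_n$ precisely characterize the smooth Schubert varieties of $T$-complexity one.

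Assuming these first two conditions hold, Theorem~\ref{T:smoothNRT} states that $X_w$ is \emph{non-spherical} if and only if $w$ contains the pattern $25314$. This is exactly the third defining condition of $\mathcal{M}_n$. Combining the two equivalences, $w \in \mathcal{M}_n$ if and only if $X_w$ is a smooth Schubert variety with $c_T(X_w)=1$ which fails to be spherical, which is the content of the corollary. Since both inputs are biconditionals already established in the paper, no further argument is needed.
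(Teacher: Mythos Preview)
Your proposal is correct and matches the paper's approach: the paper simply states that the corollary ``directly follows from the definition of $\mathcal{M}_n$,'' implicitly combining the Lee--Masuda--Park criterion with Theorem~\ref{T:smoothNRT} exactly as you do. Your write-up is in fact more explicit than the paper's one-line justification, but the logical content is identical.
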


\section{Enumeration of Nearly Toric Schubert Varieties}\label{S:T2}

In this section, we solve the problem of enumerating nearly toric Schubert varieties.

Let $d_n$ denote the cardinality of $\mathcal{M}_n$. 
Let $a_n$ (resp. $b_n$) denote the cardinality of $\mathcal{A}_n$ (resp. $\mathcal{B}_n$). 
In~\cite[Theorem 3.7]{Daly2010}, Daly shows that the generating series for $a_n$ is given by 
\begin{align*}
\sum_{n\geq 3} a_n x^n = \frac{x^3}{(1-3x+x^2)^2}.
\end{align*}
Since $a_n = b_{n+1}$, we see at once that 
\begin{align*}
\sum_{n\geq 4} b_{n} x^{n} = \sum_{n\geq 3} b_{n+1} x^{n+1} = \sum_{n\geq 3} a_n x^{n+1} =  \frac{x^4}{(1-3x+x^2)^2}.
\end{align*}
Likewise, since $a_n = d_{n+2}$, we see at once that 
\begin{align*}
\sum_{n\geq 5} d_{n} x^{n} = \sum_{n\geq 3} c_{n+2} x^{n+2} = \sum_{n\geq 3} a_n x^{n+2} =  \frac{x^5}{(1-3x+x^2)^2}.
\end{align*}

By combining the results of Daly, Lee, Park, and Masuda, we see that 
\begin{align*}
\left|\left\{w\in \mathbf{S}_n \mid X_w\subseteq Fl(n,\C)\ \text{ such that $c_T(X_w) = 1$}\right\}\right| = a_n+b_n. 
\end{align*}
Let us determine the generating series of $T$-complexity one Schubert varieties, 
\begin{align*}
\sum_{n\geq 4} (a_n+b_n) x^n  = \sum_{n\geq 4} a_n x^n + \sum_{n\geq 4} b_n x^n &= \sum_{n\geq 4} b_{n+1} x^n + \sum_{n\geq 4} b_n x^n \\ 
&= \frac{1}{x} \sum_{n\geq 4} b_{n+1} x^{n+1} + \sum_{n\geq 4} b_n x^n \\ 
&= \frac{1}{x}\left(-b_4x^4+ \sum_{n\geq 4} b_{n} x^{n}\right) + \sum_{n\geq 4} b_n x^n \\ 
&= \frac{1}{x}\left(-x^4+ \sum_{n\geq 4} b_{n} x^{n}\right) + \sum_{n\geq 4} b_n x^n \\ 
&= -x^3 + \frac{x+1}{x} \sum_{n\geq 4} b_{n} x^{n}\\
&= -x^3 +  \frac{x^4+x^3}{(1-3x+x^2)^2}
\end{align*}

Now, the number of smooth $T$-complexity one Schubert varieties in $Fl(n,\C)$ is given by $a_n$. 
The number of smooth $T$-complexity one Schubert varieties $X_w$ in $Fl(n,\C)$ such that $X_w$ is not spherical is given by $d_n$. 
Therefore, the number of smooth nearly toric Schubert varieties in $Fl(n,\C)$ is given by $a_n - d_n$. 
Equivalently, if we denote by $r_n$ the number of smooth nearly toric Schubert varieties in $Fl(n,\C)$ by $r_n$, then we have 
\begin{align*}
r_n = a_n - d_n = a_n - a_{n-2}.
\end{align*}
for $n\geq 5$. 
The generating series for the smooth nearly toric Schubert varieties is found as follows:
\begin{align*}
\sum_{n\geq 5} r_n x^n = \sum_{n\geq 5} (a_n - a_{n-2})x^n &= \sum_{n\geq 5} a_nx^n - x^2 \sum_{n\geq 3} a_n x^n \\
&= -a_3x^3 -a_4x^4+ \frac{x^3}{(1-3x+x^2)^2} - x^2 \left( \frac{x^3}{(1-3x+x^2)^2} \right).
\end{align*}
For $n=3$, we have six Schubert varieties in $Fl(3,\C)$.
All but one of them is a smooth toric variety.
The full flag variety $Fl(3,\C)$ itself is not a toric variety. 
At the same time, since every Schubert divisor in $Fl(3,\C)$ is a toric variety, we see that the $T$-complexity of $Fl(3,\C)$ is one. 
In other words, $Fl(3,\C)$ is the unique smooth nearly toric Schubert variety in $Fl(3,\C)$. 
Therefore, we have $a_3= 1$. 
In $Fl(4,\C)$, every Schubert variety is spherical. 
This was checked in~\cite{can2020sphericality} but it also follows easily from the pattern avoidance criterion for spherical Schubert varieties.  
At the same time, by using \cite[Theorem 1.2]{LeeMasudaPark} that we mentioned in the preliminaries section, we see that $a_4 = 6$.
In conclusion, we have 
\begin{align*}
\sum_{n\geq 5} r_n x^n =  -x^3 - 6x^4+ \frac{x^3-x^5}{(1-3x+x^2)^2}. 
\end{align*}

We record this as a proposition.
\begin{Proposition}\label{P:powerseriesforr_n}
The generating series for the number of smooth nearly toric Schubert varieties in $Fl(n,\C)$, $n=1,2,\dots$ is given by 
\begin{align*}
\sum_{n\geq 5} r_n x^n =  -x^3 - 6x^4+ \frac{x^3-x^5}{(1-3x+x^2)^2}. 
\end{align*}
\end{Proposition}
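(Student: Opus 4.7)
The plan is to express $r_n$ as a difference of two known enumeration sequences and then feed Daly's generating series for $a_n = |\mathcal{A}_n|$ into that difference. First I would recall that, by the results of Section~\ref{S:T1}, the set $\mathcal{A}_n$ parametrizes all smooth $T$-complexity-one Schubert varieties in $Fl(n,\C)$, while the subset $\mathcal{M}_n$ parametrizes exactly the non-spherical (i.e., non-nearly-toric) ones among them. Consequently, the smooth nearly toric Schubert varieties are in bijection with $\mathcal{A}_n \setminus \mathcal{M}_n$, giving the key identity $r_n = a_n - d_n$.

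Next, I would invoke Corollary~\ref{C:RecurrenceML}, which asserts $|\mathcal{A}_n| = |\mathcal{M}_{n+2}|$; this was obtained by composing the map $\Psi$ of Theorem~\ref{T:RecurrenceML} with Daly's bijection $\mathcal{B}_{n+1} \leftrightarrow \mathcal{A}_n$. Rewriting, this yields $d_n = a_{n-2}$ for all $n \geq 5$, and hence $r_n = a_n - a_{n-2}$ in that range. The remaining work is a direct manipulation of formal power series: substitute Daly's formula $\sum_{n \geq 3} a_n x^n = \tfrac{x^3}{(1-3x+x^2)^2}$ into
\begin{align*}
\sum_{n \geq 5} r_n x^n \;=\; \sum_{n \geq 5} a_n x^n \;-\; x^2 \sum_{n \geq 3} a_n x^n,
\end{align*}
and peel off the two initial contributions $a_3 x^3$ and $a_4 x^4$ that do not appear on the left hand side. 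Using $a_3 = 1$ and $a_4 = 6$ produces the correction term $-x^3 - 6x^4$ that appears in the claimed formula.

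The conceptual obstacle has already been surmounted in the earlier sections, namely the construction of $\Psi$ and the verification that $\mathcal{M}_n$ exactly parametrizes the non-spherical smooth $T$-complexity-one Schubert varieties; once those are in place, the proof of the proposition is essentially a bookkeeping exercise with generating functions. The only genuinely delicate point is the correct handling of the boundary values $a_3$ and $a_4$, since the identity $r_n = a_n - a_{n-2}$ fails for $n = 3, 4$: in $Fl(3,\C)$ the full flag variety itself is the unique smooth nearly toric Schubert variety, and in $Fl(4,\C)$ every Schubert variety is already spherical so no subtraction is needed. After accounting for these two small cases by hand, the generating series collapses to the stated expression.
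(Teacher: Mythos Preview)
Your proposal is correct and follows essentially the same route as the paper: write $r_n = a_n - d_n$, use Corollary~\ref{C:RecurrenceML} to get $d_n = a_{n-2}$, substitute Daly's generating series, and subtract off the $a_3x^3$ and $a_4x^4$ terms. One small quibble: your remark that the identity $r_n = a_n - a_{n-2}$ ``fails'' for $n=3,4$ is not quite accurate, since $a_1 = a_2 = 0$ and indeed $r_3 = 1 = a_3 - a_1$, $r_4 = 6 = a_4 - a_2$; but this is immaterial to the proof, as the proposition only concerns $n\geq 5$.
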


Recall that the Fibonacci numbers are defined by the recurrence $F_n = F_{n-1}+F_{n-2}$ and the initial conditions
$F_0 = F_1 = 1$.
We are now ready to prove our second theorem from the introduction. 
For convenience of the reader, we state it here. 

\begin{Theorem}\label{T:rn}
Let $F_k$ denote the $k$-th Fibonacci number. 
The number of smooth nearly toric Schubert varieties in $Fl(n,\C)$, where $n\geq 5$, is given by 
\begin{align*}
r_{n} = (n-2) F_{2(n-2)}.
\end{align*}
The number of singular nearly toric Schubert varieties in $Fl(n,\C)$ is given by 
\begin{align}\label{A:bn}
b_n = \frac{2(2n-7)F_{2n-8} + (7n-23)F_{2n-7}}{5}.
\end{align}
\end{Theorem}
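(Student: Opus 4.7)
The plan is to reduce both assertions to coefficient extraction in $h(x) := \frac{1}{(1-3x+x^2)^2}$. From the generating-series computations recorded immediately before the theorem (together with Proposition~\ref{P:powerseriesforr_n}), one has $b_n = [x^{n-4}] h(x)$ for $n\geq 4$, and $r_n = [x^{n-3}] h(x) - [x^{n-5}] h(x)$ for $n\geq 5$. Writing $h_n := [x^n] h(x)$ and setting $h_k := 0$ for $k<0$, the theorem is equivalent to the two explicit identities
\begin{align*}
h_n - h_{n-2} \;=\; (n+1)\, F_{2n+2} \qquad \text{and} \qquad h_n \;=\; \frac{2(2n+1)\,F_{2n} + (7n+5)\,F_{2n+1}}{5}.
\end{align*}

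For part (1) I would proceed entirely at the level of generating functions. The identity
\begin{align*}
\sum_{n\geq 0} F_{2n+2}\, x^n \;=\; \frac{1}{1-3x+x^2}
\end{align*}
is immediate from $F_{2n+2} = 3F_{2n} - F_{2n-2}$. Differentiating this series, multiplying the result by $x$, and adding the original series gives
\begin{align*}
\sum_{n\geq 0} (n+1)\, F_{2n+2}\, x^n \;=\; \frac{1}{1-3x+x^2} + \frac{3x - 2x^2}{(1-3x+x^2)^2} \;=\; \frac{1-x^2}{(1-3x+x^2)^2} \;=\; (1-x^2)\, h(x).
\end{align*}
Comparing coefficients yields $h_n - h_{n-2} = (n+1) F_{2n+2}$, and the shift $n \mapsto n-3$ produces the claimed formula $r_n = h_{n-3} - h_{n-5} = (n-2) F_{2n-4}$.

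For part (2) the plan is to verify the stated closed form for $h_n$ by computing the generating function of the right-hand side and recognizing it as $h(x)$. The four ingredients
\begin{align*}
\sum F_{2n}\, x^n \;=\; \frac{x}{1-3x+x^2}, \qquad \sum F_{2n+1}\, x^n \;=\; \frac{1-x}{1-3x+x^2},
\end{align*}
together with the $x\frac{d}{dx}$-derivatives $\sum n F_{2n}\, x^n = \frac{x(1-x^2)}{(1-3x+x^2)^2}$ and $\sum n F_{2n+1}\, x^n = \frac{x(2-2x+x^2)}{(1-3x+x^2)^2}$, allow the combination
\begin{align*}
\frac{1}{5}\Big[\, 4\sum n F_{2n}\, x^n + 2\sum F_{2n}\, x^n + 7\sum n F_{2n+1}\, x^n + 5\sum F_{2n+1}\, x^n \,\Big]
\end{align*}
to be reduced over the common denominator $(1-3x+x^2)^2$; a short expansion shows that the resulting polynomial numerator collapses to the constant $5$, so the sum is exactly $h(x)$. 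Substituting $n \mapsto n-4$ in the resulting identity for $h_n$ then yields the advertised formula for $b_n$.

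The main obstacle is locating the correct ansatz in part (2). The linear coefficients $2(2n+1)$ and $(7n+5)$ are not dictated by pure degree-counting; they are pinned down by the double-pole structure of $h(x)$ at the reciprocal roots of $1-3x+x^2$ together with the initial values $h_0 = 1$ and $h_1 = 6$. An alternative I would fall back on only if the ansatz proved elusive is to decompose $h(x)$ via partial fractions over the roots of $1-3x+x^2$ and translate the answer back using Binet's formula, but this is substantially more laborious than the direct generating-function check sketched above.
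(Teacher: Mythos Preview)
Your argument is correct and takes a genuinely different route from the paper's own proof. The paper obtains the formula for $b_n$ simply by quoting Daly's closed form $a_n = \frac{2(2n-5)F_{2n-6} + (7n-16)F_{2n-5}}{5}$ and shifting $n\mapsto n-1$; it then proves the formula for $r_n$ by substituting Daly's expression into $r_{n+2} = a_{n+2} - a_n$ and performing a lengthy telescoping computation with repeated applications of the Fibonacci recurrence. You, by contrast, bypass Daly's formula entirely for part~(1): the identity $\sum_{n\geq 0}(n+1)F_{2n+2}\,x^n = (1-x^2)h(x)$ obtained by differentiating the bisected Fibonacci generating function gives $r_n$ in one stroke, which is considerably cleaner than the paper's dozen-line Fibonacci reduction. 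For part~(2) you effectively \emph{reprove} Daly's formula by verifying its generating function directly; this is more work than the paper's one-line citation, but makes your treatment self-contained. One small caution: the paper's text states the convention $F_0 = F_1 = 1$, yet both Daly's formula and the theorem visibly require the standard convention $F_0 = 0$, $F_1 = 1$, which is the one your computations (correctly) use.
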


\begin{proof}
In~\cite{Daly2010}, Daly shows that 
\begin{align}\label{A:Dalysformulaforan}
a_n = \frac{2(2n-5)F_{2n-6} + (7n-16)F_{2n-5}}{5}. 
\end{align}
Since $b_n = a_{n-1}$, our formula (\ref{A:bn}) follows.
Next, we will calculate $r_{n+2}$ by using the formula, $r_{n+2} = a_{n+2}-a_n$.
We apply the recurrence for the Fiboncci numbers repeatedly:  
\begin{align*} 
r_{n+2} &= a_{n+2} - a_n \\
&= \frac{ 2(2(n+2)-5)F_{2(n+2)-6} + (7(n+2)-16)F_{2(n+2)-5} -2(2n-5)F_{2n-6} - (7n-16)F_{2n-5} }{5} \\
&= \frac{ (4n-2)F_{2n-2} + (7n-2)F_{2n-1} -(4n-10)F_{2n-6} - (7n-16)F_{2n-5} }{5} \\ 
&= \frac{ (4n-2)F_{2n-2} + (7n-2) (F_{2n}-F_{2n-2})  -(4n-10)(F_{2n-4}-F_{2n-5}) -(7n-16)F_{2n-5} }{5} \\ 
&= \frac{ (7n-2) F_{2n}- 3nF_{2n-2}  -(4n-10)F_{2n-4}-(3n-6)F_{2n-5} }{5} \\ 
&= \frac{ (7n-2) F_{2n}- 3nF_{2n-2}  -(4n-10)F_{2n-4}-(3n-6) (F_{2n-3}-F_{2n-4}) }{5} \\ 
&= \frac{ (7n-2) F_{2n}- 3nF_{2n-2} - (3n-6) F_{2n-3} - (n-4) F_{2n-4}) }{5} \\ 
&= \frac{ (7n-2) F_{2n}- 3nF_{2n-2} - (3n-6) F_{2n-3} - (n-4) (F_{2n-2}-F_{2n-3})) }{5} \\ 
&= \frac{ (7n-2) F_{2n}- (4n-4) F_{2n-2} - (2n-2) F_{2n-3} }{5} \\ 
&= \frac{ (7n-2) F_{2n}- (2n-2)F_{2n-1} - (2n-2) F_{2n-2} }{5} \\ 
&= \frac{ (7n-2) F_{2n}- (2n-2)F_{2n} }{5}\\
&=nF_{2n}. 
\end{align*}
This finishes the proof.
\end{proof}

\begin{Theorem}
For $n\geq 0$, let $t_n$ denote the number of nearly toric Schubert varieties in $Fl(n,\C)$. 
Then the generating series of $t_n$ is given by 
\begin{align*}
\sum_{n\geq 0} t_nx^n =   \frac{x^3+x^4-x^5}{(1-3x+x^2)^2}.
\end{align*}
Furthermore, we have 
\begin{enumerate}
\item[(1)] $t_0 = t_1 = t_2 = 0$, $t_3 =1$, $t_4=7$.
\item[(2)] For $n\geq 5$, $t_n$ is given by the sum
\begin{align*}
t_n = r_n+a_{n-1} = (n-2)F_{2n-2} + \frac{2(2n-7)F_{2n-8} + (7n-23)F_{2n-7}}{5}.
\end{align*}
\end{enumerate}
\end{Theorem}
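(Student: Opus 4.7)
The plan is to decompose $t_n$ according to whether the underlying Schubert variety is smooth or singular, and then exploit the enumerative results already established in this section together with Theorem~\ref{T:singulariff}. The key conceptual input is Lemma~\ref{L:cT=1cB=0}: every singular Schubert variety in $Fl(n,\C)$ with $c_T(X_w)=1$ is automatically spherical, hence nearly toric. Consequently, for each $n$ the number of singular nearly toric Schubert varieties in $Fl(n,\C)$ is exactly $b_n = |\mathcal{B}_n|$, which by Daly's bijection \cite[Theorem 2.8]{Daly2010} equals $a_{n-1}$. Combined with Theorem~\ref{T:rn}, which counts the smooth nearly toric Schubert varieties by $r_n$, we obtain for $n\geq 5$ the decomposition
\begin{equation*}
t_n \;=\; r_n + b_n \;=\; r_n + a_{n-1} \;=\; (n-2)F_{2n-4} + \frac{2(2n-7)F_{2n-8} + (7n-23)F_{2n-7}}{5},
\end{equation*}
which is statement (2).

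Next, I would verify the small cases separately. For $n\leq 2$ there are no Schubert varieties of $T$-complexity one (both $r_n$ and $b_n$ vanish), so $t_0=t_1=t_2=0$. For $n=3$, the discussion preceding Proposition~\ref{P:powerseriesforr_n} observed that $Fl(3,\C)$ is the unique nearly toric Schubert variety in its flag variety, so $t_3=1$. For $n=4$, the text records that every Schubert variety in $Fl(4,\C)$ is spherical, so the nearly toric ones are precisely those with $c_T=1$; by \cite[Theorem 1.2, Theorem 1.3]{LeeMasudaPark} this number is $a_4+b_4 = 6 + a_3 = 6+1 = 7$.

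To derive the generating series, I would add the two pieces already computed in this section. From the calculation preceding Proposition~\ref{P:powerseriesforr_n} we have
\begin{equation*}
\sum_{n\geq 5} r_n x^n \;=\; -x^3 - 6x^4 + \frac{x^3-x^5}{(1-3x+x^2)^2},
\end{equation*}
and from the displayed formula for $\sum_{n\geq 4}b_n x^n = \frac{x^4}{(1-3x+x^2)^2}$ together with $b_4 = a_3 = 1$ we get
\begin{equation*}
\sum_{n\geq 5} b_n x^n \;=\; \frac{x^4}{(1-3x+x^2)^2} - x^4.
\end{equation*}
Adding $t_3 x^3 + t_4 x^4 = x^3+7x^4$ to the sum of these two series, the polynomial corrections cancel cleanly and leave
\begin{equation*}
\sum_{n\geq 0} t_n x^n \;=\; \frac{x^3+x^4-x^5}{(1-3x+x^2)^2}.
\end{equation*}

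There is no genuine obstacle here; the theorem is a bookkeeping corollary of Theorem~\ref{T:singulariff}, Theorem~\ref{T:rn}, and the generating series computations already carried out. The only point that requires a little care is the small-case verification at $n=4$, where one must know that sphericality is automatic, so that every $c_T=1$ Schubert variety contributes to $t_4$; and the algebraic cancellation in the generating-series sum, which must be done attentively to confirm that the initial-term corrections indeed cancel against the low-degree terms of $\frac{x^3-x^5+x^4}{(1-3x+x^2)^2}$.
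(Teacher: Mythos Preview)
Your proposal is correct and follows essentially the same route as the paper: decompose $t_n=r_n+b_n$, verify the small cases $n\le 4$ directly, and assemble the generating series from Proposition~\ref{P:powerseriesforr_n} together with $\sum_{n\ge 4}b_nx^n=\frac{x^4}{(1-3x+x^2)^2}$, after which the polynomial corrections cancel exactly as you indicate. Your explicit invocation of Lemma~\ref{L:cT=1cB=0} to justify that the singular contribution is all of $b_n$ is a nice touch, and your formula $r_n=(n-2)F_{2n-4}$ agrees with Theorem~\ref{T:rn} (the exponent $2n-2$ in the displayed statement is a typo).
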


\begin{proof}
Notice that $t_n = r_n + b_n$ for $n\geq 2$. 
For $n\in \{0,1,2\}$, every Schubert variety is a smooth toric variety. 
Indeed, $Fl(0,\C)$ is a point, $Fl(1,\C)$ is the projective line $\mathbb{P}^1$, 
and $Fl(2,\C)$ is the projective plane $\mathbb{P}^2$. 
It follows that $t_0=t_1=t_2 = 0$. 
We already observed previously that $t_3 = a_3= 1$. 
Since $a_n = b_{n+1}$, we see that $b_4=1$. 
Also, we found already that $a_4=6$. 
Hence, $t_4 = a_4+b_4= 7$. 
It remains to calculate $\sum_{n\geq 5} t_nx^n$. 
For this computation we use Proposition~\ref{P:powerseriesforr_n}:
\begin{align*}
\sum_{n\geq 5} (r_n + b_n)x^n &= \sum_{n\geq 5} r_n +\sum_{n\geq 5}  b_n\\
&= \left( -x^3 - 6x^4+  \frac{x^3-x^5}{(1-3x+x^2)^2}\right) + \left( - b_4x^4 + \sum_{n\geq 4} b_{n} x^{n}\right) \\
 &= -x^3 - 6x^4+ \frac{x^3-x^5}{(1-3x+x^2)^2} - x^4 +  \frac{x^4}{(1-3x+x^2)^2}  \\
  &= -x^3 - 7x^4+ \frac{x^3+x^4-x^5}{(1-3x+x^2)^2}. \\
\end{align*}
Combining this with $t_3x^3 + t_4x^4 = x^3 + 7x^4$ we finish the proof of our first assertion. 
For our second assertion, we already know that $t_0,\dots,t_4$. 
For the second part, we combine Theorem~\ref{T:rn} and (\ref{A:Dalysformulaforan}).
This finishes the proof of our theorem.
\end{proof}

\section{Lattice Paths}\label{S:Lattice}

We begin with recalling the ingredients of our combinatorics. 
By a {\it lattice path in $\Z^2$} we mean a sequence 
$\pi:=((x_0,y_0),\dots, (x_k,y_k)) \in (\Z^2)^{k+1}$ such that for each $i\in \{1,\dots, k\}$, 
we have $(x_i,y_i)\in \{ (x_{i-1}+1,y_i), (x_{i},y_{i-1}+1)\}$.
In this case, $(x_0,y_0)$ (respectively, $(x_k,y_k)$) is called the starting point (respectively, the ending point) of $\pi$. 
It is our implicit assumption that when we add the unit length line segments between the consecutive lattice points of $\pi$ we get a connected path 
in the euclidean topology of the plane. 

It will be advantageous to view a lattice path $((x_0,y_0),\dots, (x_k,y_k)) \in (\Z^2)^{k+1}$ as a word, $a_1a_2\dots a_k$, where for each $i\in \{1,\dots, k\}$, the letter $a_i$ is defined by 
\begin{align*}
a_i := 
\begin{cases}
N & \text{ if $(x_i,y_i) = (x_i,y_{i-1}+1)$},\\
E & \text{ if $(x_i,y_i) = (x_{i-1}+1,y_i)$}.
\end{cases}
\end{align*}
Here, $N$ and $E$ stand for a \emph{unit North} and a \emph{unit East} step, respectively. 
For $n \in \mathbb{Z}_+$, a \emph{Dyck path of size $n$} is a lattice path $\pi$ such that the number of $N$'s in any initial segment of the word representation of $\pi$ is greater than or equal to the number of $E$'s in the same initial segment, and the total number of $N$'s as well as the total number of $E$'s in $\pi$ is equal to $n$. 
Although it is not necessary, as a convention, we assume that a Dyck path of size $n$ starts at $(0,0)$ and ends at $(n,n)$.
\begin{Notation}
The set of all Dyck paths of size $n$ will be denoted by $L_{n,n}^+$. 
\end{Notation}
It is well-known that~\cite{BandlowKillpatrick} the set of all 312-avoiding permutations in $W$ is in a bijection with the set of all Dyck paths of size $n$. 
We call a Dyck path $\pi \in L_{n,n}^+$ whose word is given by 
\[
\underbrace{NN\dots N}_{\text{$n$ North steps }} \underbrace{EE\dots E}_{\text{ $n$ East steps}}
\] 
an {\em elbow}. 
It is called a {\em ledge} if its word is of the form 
\[
\pi = \underbrace{NN\dots N}_{\text{$n-1$ North steps}} \underbrace{E\dots ENE\cdots EE}_{\text{$n$ East steps}},
\]
where there are at least two East steps at the end of the path, and there is a unique isolated North step.
In other words, a Dyck path $\pi$ in $L_{n,n}^+$ is called a {\em ledge} if its word starts with $n-1$ North steps followed by an East step,
and ends with at least two East steps. 
\medskip

Let $\pi \in L_{n,n}^+$. 
A lattice point $p$ on $\pi$ is called a {\em peak} if $p$ is the highest point of a North step and the left most point of an East step of $\pi$. 
In other words, a lattice point $p$ on $\pi$ is called a peak if a moving particle on $\pi$, moving towards $(n,n)$, arrives at $p$ by a North step, and then departs from it by an East step. 
The {\em primary dip} of $\pi$ is the last diagonal point that $\pi$ hits before reaching $(n,n)$.
The {\em secondary dip} of $\pi$ is the last lattice point of the form $(b-1,b)$, where $0<b<n$, that $\pi$ hits before reaching $(n,n)$. 
In Figure~\ref{F:fig0}, we depict a Dyck path of size $10$ (on the left).
On the right hand side of the same figure, we marked the peaks of the Dyck path by the solid thick black dots.  
The circles indicate the primary and the secondary dips of the Dyck path.
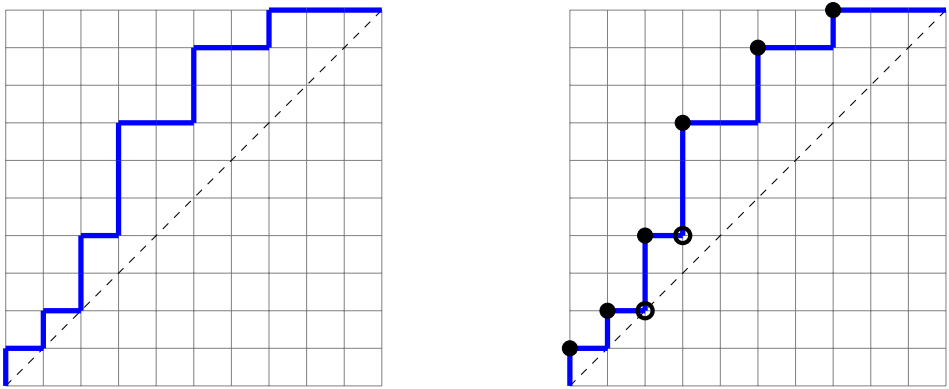
\begin{figure}[htp]
\begin{center}
 \begin{tikzpicture}[scale=0.5]
 \begin{scope}[xshift=-7.5cm]
    \NEpath{0,0}{10}{10}{1,0,1,0,1,1,0,1,1,1,0,0,1,1,0,0,1,0,0,0};
    \end{scope}
    
     \begin{scope}[xshift=7.5cm]
     \NEpath{0,0}{10}{10}{1,0,1,0,1,1,0,1,1,1,0,0,1,1,0,0,1,0,0,0};
    \draw[fill=black] (0,1) circle   (.2cm);
      \draw[fill=black] (1,2) circle   (.2cm);
          \draw[fill=black] (2,4) circle   (.2cm);
             \draw[fill=black] (3,7) circle   (.2cm);
              \draw[fill=black] (5,9) circle   (.2cm);
              \draw[fill=black] (7,10) circle   (.2cm);
                  \draw[ultra thick] (3,4) circle   (.2cm);
                  \draw[ultra thick] (2,2) circle   (.2cm);
    \end{scope}
    
\end{tikzpicture}    
\end{center}
\caption{A Dyck path of size 10 with marked peaks and dips.}
\label{F:fig0}
\end{figure}

For $r\in \{0,1,\dots, n\}$, the line defined by the equation $y-x-r=0$ in the $xy$-plane will be called the {\em $r$-th diagonal}.
The 0-th diagonal is called the {\em main diagonal}.
For $\pi \in L_{n,n}^+$, let $t_\pi$ be the number defined by
\begin{align*}
t_\pi:= \max \{ s \in \{1,\dots, n\} :\ \text{there is a peak of $\pi$ on the $s$-th diagonal}\}.
\end{align*}
Notice that for each $r\in \{1,\dots, t_\pi\}$ the portion of $\pi$ that lies weakly above the $r$-th diagonal, denoted by $\pi^{(r)}$, 
gives a family of ``shifted'' Dyck paths. 
We call $\pi^{(r)}$ the {\em $r$-th subpath system of $\pi$}.
For example, in Figure~\ref{F:fig1}, we depict an element $\pi\in L_{8,8}^+$ along with its 1-st and 2-nd subpath systems. 
\begin{figure}[htp]
\begin{center}
 \begin{tikzpicture}[scale=0.55]
 
\begin{scope}[xshift= -11cm]
\NEpath{0,0}{8}{8}{1,1,0,0,1,1,1,1,0,0,0,1,1,0,0,0}
\node at (4,-1) {$\pi$};
\end{scope}

\begin{scope}[xshift= 0cm]
\NEpath{0,0}{8}{8}{}
\draw[dashed] (0,1) -- (7,8);
\draw[ultra thick, color=blue] (0,1) -- (0,2) -- (1,2);
\draw[ultra thick, color=blue] (2,3) -- (2,6) -- (5,6) -- (5,8) -- (7,8);
\node at (4,-1) {$\pi^{(1)}$};
\end{scope}

\begin{scope}[xshift= 11cm]
\NEpath{0,0}{8}{8}{}
\draw[dashed] (0,2) -- (6,8);
\draw[dashed] (0,1) -- (7,8);
\draw[fill=blue] (0,2) circle   (.2cm);
\draw[ultra thick, color=blue] (2,4) -- (2,6)-- (4,6);
\draw[ultra thick, color=blue] (5,7) -- (5,8)-- (6,8);
\node at (4,-1) {$\pi^{(2)}$};
\end{scope}
\end{tikzpicture}
\end{center}
\caption{A Dyck path and its subpath systems.}
\label{F:fig1}
\end{figure}
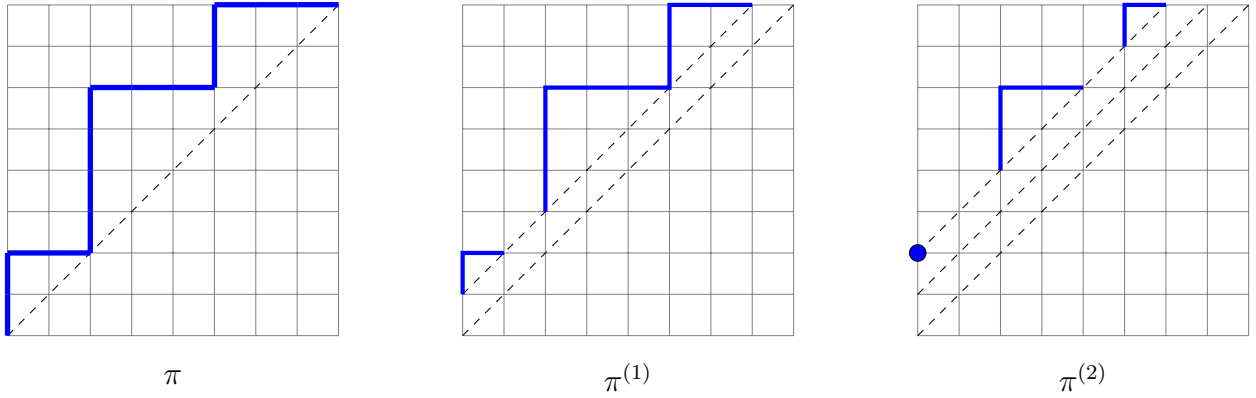

\subsection{312-avoiding permutations and Dyck paths.}

The {\em area statistic} on $L_{n,n}^+$ is the function $\text{area}: L_{n,n}^+\to \N$ defined by 
\[
\text{area}(\pi):=\text{the number of unit squares between $\pi$ and the 0-th diagonal,}
\]
where $\pi\in L_{n,n}^+$.

There is a natural distributive lattice structure on $L_{n,n}^+$.
It is defined as follows. 
Let $\pi$ and $\tau$ be two Dyck paths from $L_{n,n}^+$. 
Then we have 
\[
\pi \preceq \tau \iff \text{$\pi$ lies weakly below $\tau$}.
\]
Note that $(L_{n,n}^+,\preceq)$ is a graded lattice with the rank function $\text{area} : L_{n,n}^+ \to \N$.

It is a well-known result of Bandlow and Killpatrick,~\cite{BandlowKillpatrick} that there is a bijection
\begin{align}\label{A:BKbijection}
\psi : \mathbf{S}_n^{312} &\longrightarrow L_{n,n}^+,
\end{align}
such that $\ell (w) = \text{area}(\psi(w))$.
In~\cite{BBFP2005}, it is shown that $\psi$ is in fact a poset isomorphism between $(\mathbf{S}_n^{312},\leq)$ and $(L_{n,n}^+,\preceq)$. 
We refer to $\psi$ as the {\em Bandlow-Killpatrick isomorphism}. 
Since it will be useful for our purposes, we will review on an example the inverse of the Bandlow-Killpatrick isomorphism.

\begin{Example}\label{E:BK}

We begin with the Dyck path $\pi \in L_{8,8}^+$ that is depicted on the left hand side of Figure~\ref{F:pathandperm}. 

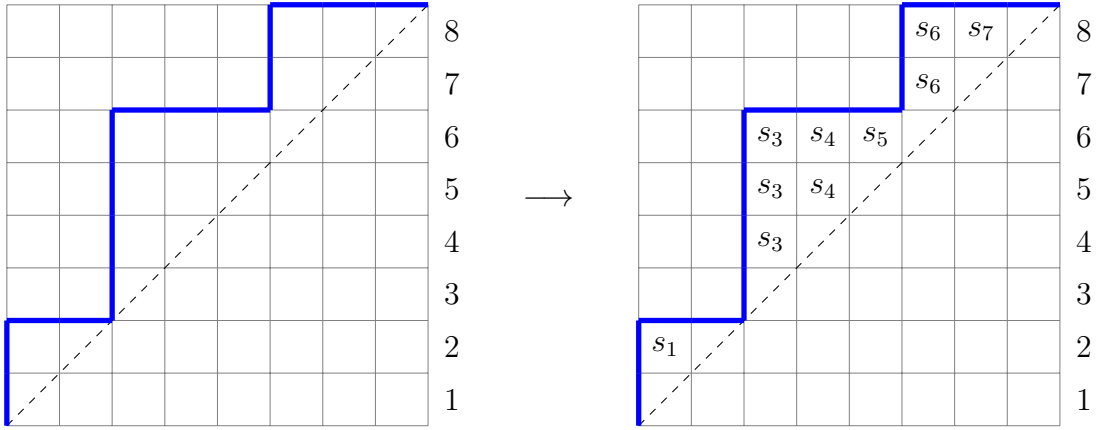
\begin{figure}[htp]
\begin{center}
 \begin{tikzpicture}
 
 \begin{scope}[scale=0.7, xshift = -6cm]
\NEpath{0,0}{8}{8}{1,1,0,0,1,1,1,1,0,0,0,1,1,0,0,0}
 \node[right=2pt] at (8,.5) {1};
  \node[right=2pt] at (8,1.5) {2};
   \node[right=2pt] at (8,2.5) {3};
    \node[right=2pt] at (8,3.5) {4};
     \node[right=2pt] at (8,4.5) {5};
      \node[right=2pt] at (8,5.5) {6};
       \node[right=2pt] at (8,6.5) {7};
        \node[right=2pt] at (8,7.5) {8};
         \end{scope}
         
         \begin{scope}[xshift=3cm]
          \node at (0,3) {$\longrightarrow$};
         \end{scope}
         
          \begin{scope}[scale=0.7, xshift = 6cm]
\NEpath{0,0}{8}{8}{1,1,0,0,1,1,1,1,0,0,0,1,1,0,0,0}
 \node[right=2pt] at (8,.5) {1};
  \node[right=2pt] at (8,1.5) {2};
   \node[right=2pt] at (8,2.5) {3};
    \node[right=2pt] at (8,3.5) {4};
     \node[right=2pt] at (8,4.5) {5};
      \node[right=2pt] at (8,5.5) {6};
       \node[right=2pt] at (8,6.5) {7};
        \node[right=2pt] at (8,7.5) {8};
        
         \node at (.5,1.5) {$s_1$};
         \node at (2.5,3.5) {$s_3$};
          \node at (2.5,4.5) {$s_3$};
           \node at (2.5,5.5) {$s_3$};
           
            \node at (3.5,4.5) {$s_4$};
          \node at (3.5,5.5) {$s_4$};
          
           \node at (4.5,5.5) {$s_5$};
           \node at (5.5,6.5) {$s_6$};
           
           \node at (5.5,7.5) {$s_6$};
           \node at (6.5,7.5) {$s_7$};
                 \end{scope}

\end{tikzpicture}    
\end{center}
\caption{The construction of a permutation from a Dyck path.}
\label{F:pathandperm}
\end{figure}

We fill the full squares between $\pi$ and the $0$-th diagonal by simple transpositions as follows: 
\begin{enumerate}
\item The first full square to the left of the main diagonal on the $i$-th row gets the simple transposition $s_{i-1}$. 
\item The indices of the simple transpositions decrease one-by-one from right to left along the rows. 
\item The full squares on the left hand side of $\pi$ are not filled.
\end{enumerate}
Next, we read the filling by the simple transpositions from left to right, top to bottom.
We use bars between two simple transpositions only when we move to a new row while reading. 
The product of simple transpositions along a row will be called a {\em segment}.  
If a row does not contain any simple transpositions, then we write $\emptyset$ for the corresponding segment. 
Thus, the reading word of the filling of $\pi$ on the right hand side of Figure~\ref{F:pathandperm} is given by 
\begin{align}\label{A:barredpermutations}
w:=s_6 s_7 | s_6 | s_3 s_4 s_5 | s_3 s_4 | s_3|\emptyset | s_1 | \emptyset.
\end{align}
Now we will view this expression as a permutation of $\{1,\dots, 8\}$. 
However, before proceeding, it is crucial to emphasize an important point. 
According to the convention used in~\cite{BandlowKillpatrick}, the values of $w$ are determined by evaluating the simple transpositions from left to right. For instance, to determine the value of $w$ at 4, we first apply $s_6$, then $s_7$, followed by $s_6$, and then $s_3$, and so forth. Consequently, the one-line expression of $w$ is as follows:

\begin{align*}
w = 2\ 1 \ 6 \ 5 \ 4 \ 8 \ 7 \ 3.
\end{align*}
\end{Example}

We wish to reiterate the convention employed in this context. In this method of representing the one-line expression $w_1\ldots w_n$ for an element $w\in \mathbf{S}_n$, the left descents correspond to the simple transpositions $s_i$ ($i\in [n-1]$) where $w_i > w_{i+1}$. For example, the left descent set of $w = 2\ 1 \ 6 \ 5 \ 4 \ 8 \ 7 \ 3$ consists of $\{ s_1,s_3,s_4,s_6, s_7\}$.

\begin{Notation}
Hereafter, we denote by $\phi: L_{n,n}^+ \to \mathbf{S}_n^{312}$ the inverse of the Bandlow-Killpatrick isomorphism $\psi : \mathbf{S}_n^{312}
\to L_{n,n}^+$.
\end{Notation}

\begin{Remark}\label{R:itisreduced}
Let $\pi \in L_{n,n}^+$. 
Let us write $w_\pi$ for the permutation $\phi(\pi)$.
Since $\ell(w_\pi) = \text{area}(\pi)$, we see that the product of the simple transpositions of the segments 
$\sigma_1,\sigma_2 , \cdots, \sigma_n$ (multiplied in the reading order, from left to right)
gives a reduced expression for $w_\pi$. 
By abuse of notation, we denote this reduced expression of $w_\pi$ by 
$\sigma_1 \sigma_2   \cdots  \sigma_n$ although some of the segments here might be empty.
\end{Remark}

We now proceed to prove several lemmas that we will use in the sequel.

\begin{Lemma}\label{L:Helpful1}
Let $\pi \in L_{n,n}^+$. 
Let $w_\pi$ denote $\phi(\pi)$.
Let $w_1w_2\ldots w_n$ be the one-line expression of $w_\pi$. 
If $(a,a)$ is the primary dip of $\pi$, then we have $w_n = a+1$.
\end{Lemma}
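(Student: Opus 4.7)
The plan is to compute $w_\pi(n)$ directly by evaluating the reading word $\sigma_1\sigma_2\cdots\sigma_n$ on $n$, applying the factors from left to right in the BK convention of Example~\ref{E:BK}, and translating the primary dip hypothesis into data about each segment. The argument proceeds in two main steps: first, a coordinate description of the filling in each row; second, a short telescoping induction on the running value.

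For each $i\in\{1,\ldots,n\}$, let $a_i$ be the $x$-coordinate at which $\pi$ makes its unique North step from height $y=i-1$ to $y=i$. A direct inspection of the filling rule in Example~\ref{E:BK} shows that the filled squares in row $i$ are exactly $[x,x+1]\times[i-1,i]$ with $a_i\leq x\leq i-2$, carrying labels $s_{a_i+1},s_{a_i+2},\ldots,s_{i-1}$ from left to right; in particular, the reading segment from row $i$ is
\[
\sigma_{n-i+1} \;=\; s_{a_i+1}\,s_{a_i+2}\cdots s_{i-1},
\]
with the convention that this word is empty when $a_i\geq i-1$. Since $(a,a)$ is the primary dip, $\pi$ strictly exceeds the diagonal for $a<y<n$ and meets it at $(a,a)$, which yields $a_{a+1}=a$ (so $\sigma_{n-a}$ is empty) and $a_i\leq i-2$ whenever $a+1<i\leq n$ (so each such $\sigma_{n-i+1}$ is non-empty and ends in $s_{i-1}$).

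Now I will track the running value. The key observation is this: if $\sigma=s_{a_i+1}s_{a_i+2}\cdots s_{i-1}$ is a non-empty segment and the value entering it is $v=i$, then the value after the segment is $i-1$. Indeed, the factors $s_{a_i+1},\ldots,s_{i-2}$ swap only integers strictly smaller than $i$ and therefore fix $v=i$, while the final factor $s_{i-1}$ sends $i\mapsto i-1$. Starting from $v=n$ and proceeding through the segments $\sigma_1,\sigma_2,\ldots$ (i.e., rows $n,n-1,\ldots$), an induction on decreasing $i$ using the previous sentence shows that after the segment for row $i$ is processed, the running value equals $i-1$, for every $i\in\{a+2,\ldots,n\}$. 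In particular, after the row $a+2$ segment, $v=a+1$. Row $a+1$ contributes the empty word, leaving $v=a+1$. For each $i\leq a$ the segment $\sigma_{n-i+1}$ involves only simple transpositions $s_j$ with $j\leq i-1\leq a-1$, and any such $s_j$ fixes $v=a+1$. Hence $w_\pi(n)=a+1$, as desired.

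The degenerate cases fit the same pattern: when $a=n-1$ we have $a_n=n-1$, so $\sigma_1$ is already empty and $v$ never leaves $n=a+1$; when $a=0$, the induction telescopes through rows $n,\ldots,2$ and delivers $v=1=a+1$. I expect the main bookkeeping obstacle to be justifying rigorously the coordinate description of the filled squares (in particular the upper bound $x\leq i-2$ and the identification of the leftmost filled square of row $i$ with column $a_i+1$) and keeping the BK left-to-right evaluation convention straight against the usual right-to-left function composition; once these are in place, the proof reduces to the telescoping identity above.
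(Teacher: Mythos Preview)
Your proof is correct and follows essentially the same route as the paper's: both arguments write $w_\pi$ in segments, observe that the segments coming from rows $n,n-1,\ldots,a+2$ are non-empty and end in $s_{n-1},s_{n-2},\ldots,s_{a+1}$ respectively, telescope $n\mapsto n-1\mapsto\cdots\mapsto a+1$ through them, note that the row-$(a+1)$ segment is empty, and finish by checking that the remaining segments involve only $s_j$ with $j\le a-1$ and hence fix $a+1$. Your introduction of the coordinates $a_i$ makes the justification of non-emptiness (via $a_i\le i-2$ for $a+1<i\le n$) more explicit than the paper's treatment, but the underlying argument is identical.
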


\begin{proof}
We begin with writing $w_\pi$ in segments. 
\[
\phi(\pi) = w_\pi  = \sigma_1 | \sigma_2 | \cdots | \sigma_{n},
\]
where $\sigma_i$, for $i\in \{1,\dots, n\}$, is the reading word of the $n-i+1$-th row of the simple transposition filled diagram of $\pi$.
Note that $\sigma_i$ might be vacuous. 
In that case, we move to $\sigma_{i+1}$ while reading the simple transpositions from left to right.

To find the value $w_n$, we apply $\sigma_1 | \sigma_2 | \cdots | \sigma_{n}$ to $n$ starting from the left most simple transposition
in $\sigma_1$. 
A moment of thought reveals that the smallest index $i\in \{1,\dots, n\}$ such that $\sigma_i =\emptyset$ is determined by the coordinates 
of the first dip of $\pi$. 
More precisely, we have
\[
\min \{ i :\ \sigma_i=\emptyset \} = n-a. 
\]
If $\sigma_1 = \emptyset$, then we see at once that $a=n-1$. 
In this case, the simple transposition $s_{n-1}$ does not occur in the reading word of $\pi$. 
In other words, we have $w_\pi (n) = n$. 
Hence, our claim follows.

We proceed with the assumption that $\sigma_1$ is nonempty. 
Let us denote $n-a-1$ by $k$. 
Then we have $k\geq 1$ and $\sigma_{k+1}=\emptyset$. 
Also, we have $\sigma_j\neq \emptyset$ for $j\in \{1,\dots, k\}$. 
The segments $\sigma_1,\dots,\sigma_k$ end with the simple transpositions $s_{n-1},s_{n-2},\dots, s_{n-k}$, respectively. 
When we apply $\sigma_1$ to $n$, we get $n-1$. 
When we apply $\sigma_2$ to $n-1$, we get $n-2$.
Continuing in this manner, when we apply $\sigma_{n-k}$ to $n-k+1$ we get $n-k$.  
Note that $n-k=a+1$. 
Thus, we see that if we apply $\sigma_1 \sigma_2\cdots \sigma_k$ to $n$ on the left, then we get $a+1$. 
We now observe that the segments $\sigma_{k+2}\sigma_{k+3}\dots \sigma_n$ do not have any simple transposition $s_j$
such that $j > a-1$. 
Therefore, the application of $\sigma_{k+2}\sigma_{k+3}\dots \sigma_n$ to $a+1$ on the left gives $a+1$.
Hence, the proof of our claim is finished. 
\end{proof}

\begin{Lemma}\label{L:Helpful2}
Let $\pi \in L_{n,n}^+$. 
Let $w_\pi$ denote $\phi(\pi)$.
Let $w_1w_2\ldots w_n$ be the one-line expression of $w_\pi$. 
Let $(a,a)$ (resp. $(b-1,b)$) be the primary (resp. secondary) dip of $\pi$.
If $a\leq n-2$, then we have $(w_{n-1}-1,w_{n-1})=(b,b+1)$. 
\end{Lemma}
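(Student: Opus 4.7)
The plan is to mirror the orbit-tracking argument used in the proof of Lemma~\ref{L:Helpful1}, but applied to $n-1$ in place of $n$. I would begin from the segment decomposition
\[
\phi(\pi) = \sigma_1 | \sigma_2 | \cdots | \sigma_n,
\]
where $\sigma_i$ is the reading word of row $r = n - i + 1$ of $\pi$. Writing $x_r$ for the $x$-coordinate of the unique North step in row $r$, each nonempty segment has the form $\sigma_i = s_{x_r + 1} s_{x_r + 2} \cdots s_{r-1}$. Under the paper's left-to-right evaluation convention, a short computation shows that such a product acts on $\{1,\ldots,n\}$ as a partial cycle: it sends $x_r + 1 \mapsto r$ and $k \mapsto k - 1$ for each $k \in \{x_r + 2, \ldots, r\}$, and it fixes every other integer; an empty segment fixes every integer.

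I would then track the orbit $k_0 := n - 1$, $k_i := \sigma_i(k_{i-1})$. The main claim is that $k_i = n - 1 - i$ for $0 \le i \le n - b - 1$, that step $i = n - b$ is an \emph{escape} step producing $k_{n-b} = b + 1$, and that every subsequent $k_j$ also equals $b + 1$. Granting this claim, $w_{n-1} = k_n = b + 1$, which is exactly the desired equality $(w_{n-1} - 1, w_{n-1}) = (b, b+1)$.

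The consecutive reductions at steps $i = 1, \ldots, n - b - 1$ boil down to the single inequality $x_r \le r - 3$ for every $r \in \{b + 2, \ldots, n\}$, and this is where both dip hypotheses enter. The equality $x_r = r - 1$ would place $(r-1, r-1)$ on $\pi$, contradicting the primary dip hypothesis because $r - 1 \ge b + 1 > a$. The equality $x_r = r - 2$ would place the point $(r-2, r-1)$, which is of the form $(b' - 1, b')$ with $b' = r - 1 > b$, on $\pi$, contradicting the maximality of the secondary dip index $b$. Combined with the Dyck condition $x_r \le r - 1$, these two exclusions produce $x_r \le r - 3$, placing the incoming value $k_{i-1} = r - 1$ safely in the reduction range of $\sigma_i$.

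At the escape step $i = n - b$, the secondary dip condition gives $x_{b+1} = b - 1$, so $\sigma_{n - b}$ collapses to the single transposition $s_b$, which sends $k_{n - b - 1} = b$ to $b + 1$. For each index $i > n - b$, the row $r = n - i + 1$ satisfies $r \le b$, hence $b + 1$ lies outside the active range $\{x_r + 1, \ldots, r\}$ of $\sigma_i$ and is fixed. The main obstacle, rather than any single step of the orbit analysis, is the translation between the path-geometric dip conditions and the arithmetic conditions on the sequence $(x_r)_{r = 1}^{n}$; once this translation is made precise, the rest is a clean case analysis on the three possible actions (reduction, escape, or fixing) of each $\sigma_i$.
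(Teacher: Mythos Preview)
Your proposal is correct and follows the same approach as the paper's own proof: both arguments track the image of $n-1$ under the successive segments $\sigma_1,\sigma_2,\ldots$ until it stabilizes at $b+1$. Your presentation is more systematic---introducing the coordinates $x_r$ and the orbit sequence $(k_i)$, and converting the dip hypotheses into the explicit bound $x_r\le r-3$---whereas the paper proceeds by an informal case analysis on the length of each $\sigma_i$, but the underlying mechanism is identical.
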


\begin{proof}
The proof of this lemma is very similar to the proof of Lemma~\ref{L:Helpful1}.
We will follow the notation that we introduced in that proof. 
In particular, $k$ is given by $k=n-a$.
Since $a\leq n-2$, we know that $\sigma_1\neq \emptyset$. 
We proceed with the assumption that $\sigma_1 = s_{n-1}$. 
In this case, if we apply the segments $\sigma_1\dots \sigma_k$ to $n-1$ on the left, then we get $n$.
Since $\sigma_{k+1},\dots, \sigma_n$ do not contain the simple transposition $s_{n-1}$, 
after applying $\sigma_{k+1}\cdots \sigma_n$ to $n$, we see that $w_\pi(n-1) = w_{n-1} = n$.  
Hence, in this case, we have $(w_{n-1}-1,w_{n-1}) = (n-1,n)$. 
By its definition, this is the secondary dip of $\pi$. Hence our claim follows in this case. 

We proceed with the assumption that $\sigma_1$ is of the form $\sigma_1 = s_{t_1} s_{t_1+1}\cdots s_{n-1}$
for some $t_1\in \{1,\dots, n-2\}$. 
Then when we apply $\sigma_1$ to $n-1$ we obtain $n-2$. 
Since $\sigma_1$ has more than one simple transpositions in it, $\sigma_2$ has to have at least one simple transposition in it.
Hence, we know that it ends with $s_{n-2}$. 
Depending on whether $s_{n-3}$ appears in $\sigma_2$ or not, when we apply $\sigma_2$ to $n-2$ on the left, we obtain either $n-2$ or $n-1$. 
In fact, the latter case can occur if and only if $\sigma_2 = s_{n-2}$. 
Then we find our secondary dip at $(n-2,n-1)$. 
At the same time, since the rest of the segments of $w_\pi$ do not contain $s_{n-2}$, we find that $w_\pi(n-1) = n-1$. 
Thus, in this case, our claim follows. 
We now proceed with the assumption that $\sigma_2$ is of the form $\sigma_2 = s_{t_2} s_{t_2+1}\cdots s_{n-2}$
for some $t_2\in \{1,\dots, n-3\}$. 
But this means that our procedure will continue inductively until we reach $\sigma_{k-1}$.
Without loss of generality we may assume that $\sigma_{k-1}$ is given by $\sigma_{k-1} =s_{a}s_{a+1}$. 
Then, by applying the reasoning we used earlier, we find that $w_{n-1} = a+2$, and that $(a+1,a+2)$ is the secondary dip of $\pi$. 
This finishes the proof of our assertion. 
\end{proof}

\begin{Lemma}\label{L:Helpful3}
Let $\pi\in L_{n,n}^+$. 
Let $w_\pi$ denote $\phi(\pi)$.
Let $w_1w_2\ldots w_n$ be the one-line expression of $w_\pi$. 
If $t$ denotes the number of East steps of $\pi$ on the line $y=n$, then we have $w_{n-t+1}=n$.
\end{Lemma}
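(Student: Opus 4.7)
The plan is to analyze the first segment $\sigma_1$ of the reading word of $\pi$ (the contribution of the top row of the simple-transposition filling), determine its exact form in terms of $t$, and then track how the word $w_\pi$ acts on the value $n-t+1$.

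First, I would pin down $\sigma_1$. Since there are $t$ East steps of $\pi$ on the line $y=n$, the path reaches $y=n$ for the first time at the point $(n-t,\,n)$, arriving there via the North step from $(n-t,\,n-1)$. In the top row of the grid (between $y=n-1$ and $y=n$) the full squares lying strictly between $\pi$ and the main diagonal are therefore exactly those with $x$-coordinate in $\{n-t,\,n-t+1,\,\ldots,\,n-2\}$ (an empty set when $t=1$). By the filling convention, read left to right, these $t-1$ squares carry
\[
\sigma_1 = s_{n-t+1}\, s_{n-t+2}\, \cdots\, s_{n-1},
\]
with the product understood to be empty if $t=1$.

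Next, I would note that $s_{n-1}$ can appear only in row $n$: the convention assigns to row $i$ only simple transpositions of index at most $i-1$, so no segment $\sigma_j$ with $j\ge 2$ can contain $s_{n-1}$. Now applying $\sigma_1$ to $n-t+1$ in the left-to-right convention of the Bandlow--Killpatrick bijection sends
\[
n-t+1 \;\mapsto\; n-t+2 \;\mapsto\; \cdots \;\mapsto\; n.
\]
If $t=1$, then $\sigma_1$ is empty and $s_{n-1}$ does not appear anywhere in the reading word, so $n$ is fixed throughout and $w_n = n$, as required.

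Finally, to finish the argument in the case $t\ge 2$, observe that the segments $\sigma_2,\ldots,\sigma_n$ consist solely of simple transpositions $s_j$ with $j\le n-2$, each of which fixes $n$. Hence the value $n$ attained after $\sigma_1$ is preserved through the rest of the word, yielding $w_{n-t+1}=n$. The whole proof is routine; the only step requiring slight care is the identification of $\sigma_1$ with the stated product, which is really just a bookkeeping exercise on the top row of the filling diagram (and parallels the analogous computations for the primary and secondary dips in Lemmas~\ref{L:Helpful1} and~\ref{L:Helpful2}).
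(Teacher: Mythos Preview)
Your proof is correct and follows essentially the same approach as the paper: identify $\sigma_1$ explicitly from the $t$ East steps on the top row, check that $\sigma_1$ sends $n-t+1$ to $n$, and observe that no later segment contains $s_{n-1}$ so $n$ is fixed thereafter. If anything, your write-up is cleaner, since the paper's displayed formula for $\sigma_1$ contains an apparent typo in the indices.
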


\begin{proof}
Let $t$ denote the number of East steps of $\pi$ on the line $y=n$.
If $t=1$, then our claim follows at once since in this case we have $w_n = n$.
We proceed with the assumption that $t> 1$. 
Then we know that  
\begin{align*}
\sigma_1 = s_{n-t+1}  s_{n-t} s_{n-t-1}\cdots s_{n-1},
\end{align*}
which shows at once that $\sigma_1(n-t+1)=n$. 
Since the rest of the segments of $w_\pi$ do not contain the simple transposition $s_{n-1}$, we see that $w_\pi(n-t+1)=n$.
This finishes the proof of our assertion. 
\end{proof}

\section{Nearly Toric Partition Schubert Varieties}\label{S:Nearly}

Our goal in this section is to identify the set of Dyck paths corresponding to the set of nearly toric partition Schubert varieties.

\begin{Theorem}\label{T:1Dyckpathversion}
Let $n\geq 4$. 
Let $\pi\in L_{n,n}^+$. 
Let $w:=w_1w_2\ldots w_n$ denote the one-line expression for $\phi(\pi)$. 
Then $w$ has a unique 321 pattern if and only if $\pi$ has a unique peak at the third diagonal and no other peaks at the $r$-th diagonal for $r\geq 4$.
\end{Theorem}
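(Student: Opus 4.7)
The approach is to use the Bandlow--Killpatrick bijection $\phi : L_{n,n}^+ \to \mathbf{S}_n^{312}$ and translate both sides of the equivalence into statements about the Young diagram $D_\pi$ bounded by $\pi$ and the main diagonal. A peak on the $r$-th diagonal at the lattice point $(x, x+r)$ corresponds exactly to column $x+1$ of $D_\pi$ having height $r-1$. Combined with the observation that, under the hypothesis of no peaks on diagonals $\geq 4$, every $x$ with $h(x) - x = 3$ must itself be a peak (otherwise $h(x-1) - (x-1) = 4$, forcing a peak on diagonal $\geq 4$), the peak condition on $\pi$ translates into: $D_\pi$ has exactly one column of height $2$ and no column of height $\geq 3$.

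The crucial step is the enumerative identity
\[
\#\{\text{occurrences of 321 in } w\} \;=\; \sum_{i=1}^{n-1}\binom{c_i}{2},
\]
where $w = \phi(\pi)$ and $c_i$ is the height of the $i$-th column of $D_\pi$. I will prove this bijectively. The column heights coincide with the Lehmer statistic $c_i = \#\{j > i : w_j < w_i\}$, so if I enumerate the values less than $w_i$ appearing after position $i$ in decreasing order as $v^{(i)}_1 > v^{(i)}_2 > \cdots > v^{(i)}_{c_i}$, then to each unordered pair of rows $\{i+k_1, i+k_2\}$ in column $i$ I assign the 321 pattern with values $w_i, v^{(i)}_{k_1}, v^{(i)}_{k_2}$ in $w$. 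The 312-avoidance of $w$ ensures that these three values appear in strictly decreasing position order, and the bijection's injectivity and surjectivity follow from the observation that every 321 pattern is uniquely determined by its largest value, which locates the column $i$.

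Once the identity is in place, the theorem follows by elementary arithmetic: a sum of nonnegative integers of the form $\binom{c_i}{2}$ equals $1$ if and only if exactly one $c_i$ equals $2$ and every other $c_i$ lies in $\{0, 1\}$, which matches the peak condition by the first paragraph. The principal obstacle is proving the identity. The subtlety is that the naive assignment of a square $(i, j) \in D_\pi$ to the value-pair inversion $(i, j)$ of $w$ is not in general a bijection onto the inversions of $w$; one must instead use the Lehmer-based labeling described above, and verify at each step that $312$-avoidance keeps the relevant values in the correct positional order to form a $321$ pattern.
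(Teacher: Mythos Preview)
Your argument is correct and takes a genuinely different route from the paper's. The paper invokes the Lee--Masuda--Park reduced-word characterization: a $3412$-avoiding $w$ contains $321$ exactly once if and only if some reduced word has a single factor $s_is_{i+1}s_i$ and no other repeated generator. It then reads this off the segment structure of the Bandlow--Killpatrick reduced expression, arguing that a peak on diagonal $r\ge 4$ (or two peaks on diagonal $3$) forces extra repeated simple transpositions. Your approach bypasses the reduced-word machinery entirely: you prove the enumerative identity $\#\{321\text{ in }w\}=\sum_k\binom{c_k}{2}$ directly from $312$-avoidance (the key point being that two positions $j_1<j_2$ after $i$ with $w_{j_1},w_{j_2}<w_i$ must satisfy $w_{j_1}>w_{j_2}$, else $(i,j_1,j_2)$ is a $312$), and then observe that $\sum_k\binom{c_k}{2}=1$ forces exactly one $c_k=2$ and all others $\le 1$. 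This is more elementary and self-contained, since it does not appeal to the external \cite{LeeMasudaPark} result.

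Two small points to tighten. First, the assertion that the column heights of $D_\pi$ coincide with the Lehmer code $c_i(w)=\#\{j>i:w_j<w_i\}$ under the specific map $\phi$ of the paper is true but is not stated anywhere in the paper; you should either supply the short inductive argument (for $312$-avoiding $w$ one has $c_i\le c_{i+1}+1$, and this recovers the column profile) or cite it. Second, your phrase ``corresponds exactly to'' in the first paragraph overstates the correspondence: a column of height $r-1$ need not sit under a peak in general. You immediately repair this for the case at hand, so the logic is fine, but the cleaner formulation is simply: no peak on diagonal $\ge 4$ is equivalent to $\max_k c_k\le 2$ (look at the leftmost column of maximal height; it lies under a peak), and under that constraint every column of height $2$ is forced to lie under a peak on diagonal $3$.
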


\begin{proof}
$(\Rightarrow)$ Let $w \in \mathbf{S}_n^{312}$ be a permutation such that the 321 pattern occurs exactly once in $w$. 
Since a 312-avoiding permutation is also a 3412-avoiding permutation, by~\cite[Theorem 1.2]{LeeMasudaPark} that we mentioned in the preliminaries section, 
we know that a reduced expression of $w$ contains 
a braid relation of the form $s_i s_{i+1} s_i$ for some $i\in \{1,\dots, n-1\}$ as a factor only once and no other repetitions. 
Since there are no repetitions of the simple transpositions (other than $s_i$ and $s_{i+1}$), 
we cannot introduce any braid relations other than $s_i s_{i+1} s_i$ or $s_{i+1}s_is_{i+1}$ in any reduced expression of $w$. 
In other words, in any reduced expression of $w$, we will have the same braid relation $s_i s_{i+1} s_i$ (or, $s_{i+1}s_is_{i+1}$) exactly once. 

We now apply the Bandlow-Killpatrick isomorphism $\psi$ to $w$. 
Let $\pi$ denote the image $\psi(w)$. 
Then $w = \phi ( \psi(w))$. 
We write $w$ as a product of simple transpositions by using the segments $\sigma_1,\dots , \sigma_n$ as in Remark~\ref{R:itisreduced}.
If a peak of $\pi$ occurs at the $r$-th diagonal where $r\geq 4$, then we see that there are at least two simple transpositions $s_t$ and $s_{t+1}$ that appear in two consecutive segments.
We already mentioned in Remark~\ref{R:itisreduced} that the product $\sigma_1\cdots \sigma_n$ gives a reduced expression for $w$.
But this contradicts with our assumption a reduced expression of $w$ cannot contain more than one repetition of a simple transposition. 
Hence, we must have $r\leq 3$. 
Now, if $\pi$ has more than one peak at the third diagonal, then once again by looking at $w$ we find a repetition of two distinct simple transpositions in the reduced expression $\sigma_1\cdots \sigma_n$. 
Once again, we find a contradiction with~\cite[Theorem 1.2]{LeeMasudaPark}.
It follows that the Dyck path $\pi$ corresponding to $w$ cannot contain more than one peak at its third diagonal, and it has no peak at the $r$-th diagonal if $r\geq 4$.
 
$(\Leftarrow)$ We assume that $\pi$ is a Dyck path such that $\pi$ has no peak at its $r$-th diagonal if $r\geq 4$, 
and it has a unique peak on its third diagonal. 
Let $\sigma_1, \sigma_2 , \cdots , \sigma_n$ denote the segments of $\phi(\pi) = w$ as in Example~\ref{E:BK}.
Then we know that for some $i\in \{1,\dots, n\}$, $\sigma_{i+1}$ is equal to $s_i s_{i+1}$, and $\sigma_{i} = s_i$. 
All other segments of $\phi(\pi)$ contain at most one simple transposition. 
It follows that we have a 3412-avoiding permutation $w$ which contains the pattern 321 only once.
But we already know the stronger condition that $w$ is a 312-avoiding permutation since $w$ is the image of a Dyck path
under the Bandlow-Killpatrick isomorphism. 
This finishes the proof of our theorem.
\end{proof}

We finish this section by listing the Dyck paths for all partition Schubert subvarieties of the flag variety $Fl(n,\C)$.

\begin{Example}
In Figure~\ref{F:forn=5}, we depicted all Dyck paths $\pi\in L_{5,5}^+$ whose permutations contain the pattern 321 exactly once. 
These examples confirm our Theorem~\ref{T:1Dyckpathversion}. 
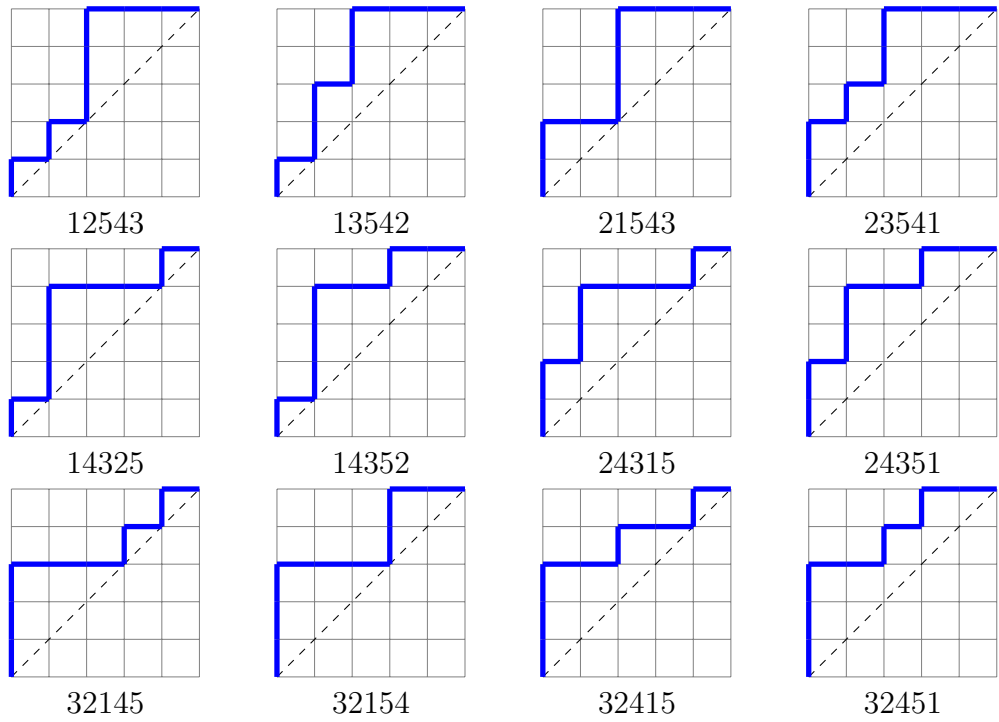
\begin{figure}[htp]
\begin{center}
 \begin{tikzpicture}[scale=0.5]
    \NEpath{0,0}{5}{5}{1,0,1,0,1,1,1,0,0,0};
    \node[below=2pt] at (2.5,0) {$12543$};
\end{tikzpicture}    
\quad\quad
\begin{tikzpicture}[scale=0.5]
   \NEpath{0,0}{5}{5}{1,0,1,1,0,1,1,0,0,0};
   \node[below=2pt] at (2.5,0) {$13542$};
\end{tikzpicture}
\quad\quad
\begin{tikzpicture}[scale=0.5]
   \NEpath{0,0}{5}{5}{1,1,0,0,1,1,1,0,0,0};
   \node[below=2pt] at (2.5,0) {$21543$};
\end{tikzpicture}
\quad \quad
 \begin{tikzpicture}[scale=0.5]
   \NEpath{0,0}{5}{5}{1,1,0,1,0,1,1,0,0,0};
   \node[below=2pt] at (2.5,0) {$23541$};
\end{tikzpicture} \\
 \begin{tikzpicture}[scale=0.5]
    \NEpath{0,0}{5}{5}{1,0,1,1,1,0,0,0,1,0};
    \node[below=2pt] at (2.5,0) {$14325$};
\end{tikzpicture}    
\quad\quad
\begin{tikzpicture}[scale=0.5]
   \NEpath{0,0}{5}{5}{1,0,1,1,1,0,0,1,0,0};
   \node[below=2pt] at (2.5,0) {$14352$};
\end{tikzpicture}
\quad\quad
\begin{tikzpicture}[scale=0.5]
   \NEpath{0,0}{5}{5}{1,1,0,1,1,0,0,0,1,0};
   \node[below=2pt] at (2.5,0) {$24315$};
\end{tikzpicture}
\quad \quad
 \begin{tikzpicture}[scale=0.5]
   \NEpath{0,0}{5}{5}{1,1,0,1,1,0,0,1,0,0};
   \node[below=2pt] at (2.5,0) {$24351$};
\end{tikzpicture} \\
 \begin{tikzpicture}[scale=0.5]
    \NEpath{0,0}{5}{5}{1,1,1,0,0,0,1,0,1,0};
     \node[below=2pt] at (2.5,0) {$32145$};
\end{tikzpicture}    
\quad\quad
\begin{tikzpicture}[scale=0.5]
   \NEpath{0,0}{5}{5}{1,1,1,0,0,0,1,1,0,0};
    \node[below=2pt] at (2.5,0) {$32154$};
\end{tikzpicture}
\quad\quad
\begin{tikzpicture}[scale=0.5]
   \NEpath{0,0}{5}{5}{1,1,1,0,0,1,0,0,1,0};
    \node[below=2pt] at (2.5,0) {$32415$};
\end{tikzpicture}
\quad \quad
 \begin{tikzpicture}[scale=0.5]
   \NEpath{0,0}{5}{5}{1,1,1,0,0,1,0,1,0,0};
    \node[below=2pt] at (2.5,0) {$32451$};
\end{tikzpicture}
\caption{The Dyck paths of permutations $w\in \mathbf{S}_5^{312}$ that contain 321 pattern only once.}
\label{F:forn=5}
\end{center}
\end{figure}
\end{Example}

Let $\mathbf{NT}_n^{312}$ denote the following subset of $\mathbf{S}_n$:
\begin{align*}
\mathbf{NT}_n^{312} := \{w\in \mathbf{S}_n^{312} :\ c_T(X_w) = 1 \ \text{ and }\ c_{B_L}(X_w) = 0 \},
\end{align*}
where $B_L$ is a Borel subgroup of the Levi subgroup of the isotropy subgroup of $X_w$ in $G$. 
By using Theorem~\ref{T:1Dyckpathversion}, we find a formula for the cardinality of $\mathbf{NT}_n^{312}$. 

\begin{Theorem}\label{T:countofNTn312}
Let $w\in \mathbf{S}_n^{312}$.
If the $T$-complexity of $X_w$ is one, then $X_w$ is a nearly toric variety. 
In other words, if $c_T(X_w) = 1$, then $X_w \in \mathbf{NT}_n^{312}$. 
Furthermore, the cardinality of $\mathbf{NT}_n^{312}$ is given by
\begin{align*}
    &|\mathbf{NT}_n^{312}|=(n-2)2^{n-3}.
\end{align*}
\end{Theorem}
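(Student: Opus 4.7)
The plan is to prove the two assertions in sequence, with the enumeration translated to a lattice path count via Theorem~\ref{T:1Dyckpathversion}.

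For the first assertion, I would show that any $w\in\mathbf{S}_n^{312}$ automatically avoids the three patterns $3412$, $4231$, and $25314$, by extracting in each case a $3$-element subword whose relative order realizes $312$. Concretely, a $3412$-occurrence at positions $i<j<k<l$ has its entries at $(i,k,l)$ in $312$-order; a $4231$-occurrence at $i<j<k<l$ has its entries at $(i,j,k)$ in $312$-order; and in the length-five pattern $25314$, the entries at positions $2,3,5$ are $(5,3,4)$, which already realizes $312$. The avoidance of $3412$ and $4231$ combined with the Lakshmibai-Sandhya criterion then yields the smoothness of $X_w$, and the avoidance of $25314$ combined with Theorem~\ref{T:smoothNRT} gives the sphericity of $X_w$, so $X_w\in\mathbf{NT}_n^{312}$.

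For the count, the first assertion together with~\cite[Theorem~1.2]{LeeMasudaPark} identifies $\mathbf{NT}_n^{312}$ with those $w\in\mathbf{S}_n^{312}$ containing the pattern $321$ exactly once. By Theorem~\ref{T:1Dyckpathversion} and the Bandlow-Killpatrick isomorphism $\phi$, these correspond to Dyck paths $\pi\in L_{n,n}^+$ having exactly one peak on the third diagonal and no peaks on any higher diagonal. The crucial observation is that on such a path, every lattice point lying on the third diagonal is automatically a peak: the subsequent step cannot be $N$ (which would land on the fourth diagonal, excluded by $t_\pi\le 3$), and the preceding step cannot be $E$ (which would require arriving from the fourth diagonal). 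Hence the condition of having exactly one peak on the third diagonal is equivalent to $\pi$ passing through exactly one lattice point of the third diagonal.

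Let $(a,a+3)$, with $a\in\{0,1,\dots,n-3\}$, denote that unique point. The path $\pi$ then splits as a prefix from $(0,0)$ to $(a,a+3)$ whose only height-three point is the endpoint, followed by a suffix from $(a+1,a+3)$ to $(n,n)$ confined to the strip $0\le y-x\le 2$. Identifying North/East steps with $+1/-1$ moves on the state set $\{0,1,2\}$, both subcounts become walks on the path graph with transfer matrix
\begin{equation*}
M=\begin{pmatrix} 0 & 1 & 0 \\ 1 & 0 & 1 \\ 0 & 1 & 0 \end{pmatrix}.
\end{equation*}
An elementary induction gives $M^{2k}=2^{k-1}M^{2}$ for every $k\ge 1$, so that $(M^{2(a+1)})_{02}=2^{a}$ counts the prefix and $(M^{2(n-a-2)})_{20}=2^{n-a-3}$ counts the suffix. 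Their product $2^{n-3}$ is independent of $a$, and summing over the $n-2$ admissible values of $a$ yields $(n-2)\,2^{n-3}$. The main technical point is the transfer-matrix identity $M^{2k}=2^{k-1}M^{2}$; everything else is bookkeeping of pattern occurrences and of the splitting of $\pi$ at its unique height-three lattice point.
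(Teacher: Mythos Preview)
Your argument is correct. The first assertion is handled exactly as in the paper: extract a $312$-subpattern from each of $3412$, $4231$, and $25314$, apply the Lakshmibai--Sandhya criterion for smoothness, and invoke Theorem~\ref{T:smoothNRT} (equivalently, Theorem~\ref{intro:T1}(2)) for sphericity.

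For the enumeration, both you and the paper fix the unique third-diagonal peak at one of the $n-2$ lattice points $(a,a+3)$ and then show that the remaining freedom contributes a factor $2^{n-3}$ independent of $a$; the difference lies only in how that factor is obtained. The paper argues directly that, once the height-$3$ peak is fixed, the path is determined by which of the $n-3$ second-diagonal lattice points not adjacent to the peak are themselves peaks, and any subset occurs, so one sums $\sum_k\binom{n-3}{k}=2^{n-3}$. You instead split the path at $(a,a+3)$ into two walks confined to the strip $\{0,1,2\}$ with prescribed endpoints and compute their counts $2^{a}$ and $2^{n-a-3}$ via the identity $M^{2k}=2^{k-1}M^{2}$. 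Your preliminary observation that on such a path every third-diagonal lattice point is automatically a peak (hence ``exactly one peak at height $3$'' is the same as ``exactly one lattice point at height $3$'') gives a cleaner justification for the splitting than the paper supplies, while the paper's peak-subset description avoids any matrix algebra; the two routes are of comparable length and encode the same factorisation.
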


\begin{proof}
Let $w\in \mathbf{S}_n^{312}$. 
Then $w$ avoids both 3412 and 4231 patterns. 
In particular, by the Lakshmibai-Sandhya criterion for smoothness, we know that the Schubert variety $X_w$ is smooth. 
At the same time, since $w$ avoids 312, it avoids 25314 pattern as well. 
Hence, by our Theorem~\ref{intro:T1} (2), we see that $X_w$ is a spherical variety. 
In other words, we have $w\in \mathbf{NT}_n^{312}$.
This finishes the proof of our first assertion. 

Next, we observe that, by Theorem~\ref{T:1Dyckpathversion}, it suffices to count the Dyck paths corresponding to the elements of $\mathbf{NT}_n^{312}$. 
Recall that we denote by $\psi$ the Bandlow-Killpatrick isomorphism.
Let $\pi \in L_{n,n}^+$, where $\pi = \psi(w)$, where $w\in \mathbf{NT}_n^{312}$.
If there is a unique peak of $\pi$ at the 3-rd diagonal, then it appears at one of the $n-2$ lattice points on the 3-rd diagonal. 
Next, we choose the peaks of $\pi$ on the 2-nd diagonal. 
Clearly, there are $n-3$ lattice to choose from. 
Therefore, our lattice path $\pi$ has total $k+1$ peaks, then it can be chosen in $(n-2) {n-3\choose k}$ different ways. 
It follows that the cardinality of $\mathbf{NT}_n^{312}$ is given by
\begin{align*}
  |\mathbf{NT}_n^{312}|=\sum_{k=0}^{n}\binom{n-3}{k}\cdot (n-2)=(n-2)2^{n-3}.
\end{align*}
This finishes the proof of our corollary.
\end{proof}

\section{Spherical Dyck Paths}\label{S:SphericalDyck}\label{S:Allpartition}

We recall our notion of a spherical Dyck path in more detail.


Let $\tau$ be a Dyck path in $L_{n,n}^+$. 
We call the lattice path $\tau'$ that is obtained from $\tau$ by adding an East step in front of its word the {\em $E$ extension of $\tau$}. 
In other words, if the word of $\tau$ is given by $a_1 a_2\ldots a_r$, where $a_i \in \{N,E\}$ for $i\in \{1,\dots, r\}$,
then the word of the $E$ extension of $\tau$ is given by $\tau':= a_0a_1a_2\ldots a_r$, where $a_0 = E$.

\begin{Definition}\label{D:connectedcomponent}
Let $r\in \{0,1,\dots, n-1\}$. 
Let $\pi \in L_{n,n}^+$. 
A subpath of $\pi^{(r)}$ is said to be a {\em connected component of $\pi^{(r)}$} if it starts and ends at the $r$-th diagonal,
and it touches the $r$-th diagonal exactly twice. 
We call a Dyck path $\pi$ a {\em spherical Dyck path} if every connected component $\tau$ of $\pi$ is either an elbow or a ledge, or if every connected component of $\tau$ that lies above the line $y-x- 1=0$ is either an elbow or a ledge that extends to the main diagonal of $\pi$. 
\end{Definition}

Let us explain this definition on a figure. 
A Dyck path $\pi$ is spherical if every connected component on the first diagonal $\pi^{(0)}$ is either an elbow or a ledge as depicted in Figure \ref{fig:1}(a), or every connected component of $\pi^{(1)}$ is an elbow, or a ledge whose $E$  extension is the initial step of a connected component of $\pi^{(0)}$ as depicted in Figure \ref{fig:1}(b). 
\begin{center}
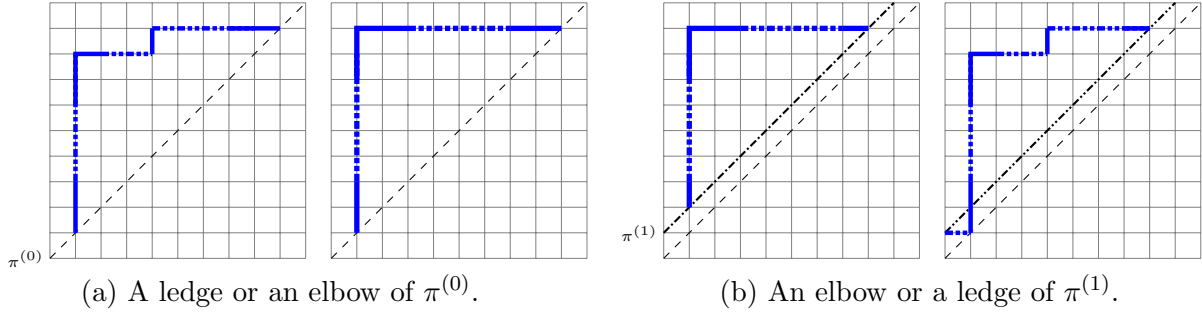
\begin{figure}[htp]
 \begin{tikzpicture}[scale=.34]
\NEpath{0,0}{10}{10}{};
    \draw[line width=1.7pt,blue,densely dash dot dot] (1,3) -- +(0,5);
    \draw[line width=1.7pt,blue,densely dash dot dot] (4,9) -- +(4,0);
    \draw[line width=1.7pt,blue,densely dash dot dot] (2,8) -- +(2,0);
    \draw[-,line width=1.7pt,blue] (1,6) -- (1,8) -- (2,8); 
     \draw[-,line width=1.7pt,blue] (1,1) -- (1,3); 
     \draw[-,line width=1.7pt,blue] (7,9) -- (9,9); 
       \draw[-,line width=1.7pt,blue] (4,8) -- (4,9); 
    \node[below=2pt] at (9,0) {\small (a) A ledge or an elbow of $\pi^{(0)}$.};
    \node[left] at (0.2,0) {\tiny$\pi^{(0)}$};

   \begin{scope}[xshift=11cm]
     \NEpath{0,0}{10}{10}{};
    \draw[line width=2pt,blue,densely dash dot dot] (1,3) -- +(0,5);
    \draw[line width=2pt,blue,densely dash dot dot] (3,9) -- +(4,0);
    \draw[-,line width=2pt,blue] (1,7) -- (1,9) -- (3,9); 
     \draw[-,line width=2pt,blue] (1,1) -- (1,3); 
     \draw[-,line width=2pt,blue] (7,9) -- (9,9); 
    \end{scope}
    
     \begin{scope}[xshift=24cm]
     \NEpath{0,0}{10}{10}{};
    \draw[thick,densely dash dot] (0,1) -- +(9,9); 
    \draw[line width=2pt,blue,densely dash dot dot] (1,3) -- +(0,5);
    \draw[line width=2pt,blue,densely dash dot dot] (3,9) -- +(4,0);
    \draw[-,line width=2pt,blue] (1,7) -- (1,9) -- (3,9); 
     \draw[-,line width=2pt,blue] (1,2) -- (1,3); 
     \draw[-,line width=2pt,blue] (7,9) -- (8,9); 
    \node[below=2pt] at (10,0) {\small (b) An elbow or a ledge of $\pi^{(1)}$.};
    \node[left] at (0.2,1) {\tiny$\pi^{(1)}$};
    \end{scope}

     \begin{scope}[xshift=35cm]
     \NEpath{0,0}{10}{10}{};
\draw[line width=1.7pt,blue,densely dash dot dot] (1,3) -- +(0,5);
\draw[thick,densely dash dot dot,] (0,1) -- +(9,9);
    \draw[line width=1.7pt,blue,densely dash dot dot] (4,9) -- +(4,0);
    \draw[line width=1.7pt,blue,densely dash dot dot] (2,8) -- +(2,0);
    \draw[-,line width=1.7pt,blue] (1,6) -- (1,8) -- (2,8); 
     
     \draw[-,line width=1.7pt,blue, densely dash dot dot] (0,1)--(1,1); 
     
       \draw[-,line width=1.7pt,blue] (1,1) -- (1,3); 
     \draw[-,line width=1.7pt,blue] (7,9) -- (8,9); 
       \draw[-,line width=1.7pt,blue] (4,8) -- (4,9); 
    \end{scope}
\end{tikzpicture} 
\caption{Spherical Dyck paths}
\label{fig:1}
\end{figure}
\end{center}
\medskip

\begin{Remark}
It is important to realize that, although its $E$ extension does not fit into the $n\times n$ grid that contains the elements of $L_{n,n}^+$, the ledge depicted in Fig~\ref{fig:00} is a valid ledge in $\pi^{(1)}$ for some spherical Dyck path $\pi$.
We call such a ledge an {\em initial ledge}. 
\begin{figure}[htp]
\begin{center}
\begin{tikzpicture}[scale=.34]
\NEpath{0,0}{10}{10}{};
\draw[thick,densely dash dot dot,] (0,1) -- +(9,9);

\draw[line width=1.7pt,blue,densely dash dot dot] (0,3) -- +(0,5);
\draw[line width=1.7pt,blue,densely dash dot dot] (1,8) -- +(2,0);
\draw[-,line width=1.7pt,blue] (0,5) -- (0,8) -- (1,8); 
\draw[-,line width=1.7pt,blue] (3,8) -- (4,8) -- (4,9)--(5,9); 
\draw[line width=1.7pt,blue,densely dash dot dot] (5,9) -- +(3,0);
\draw[-,line width=1.7pt,blue] (0,0) -- (0,3); 

\draw[line width=1.7pt,blue,densely dash dot dot] (-1,0) -- +(1,0);

\end{tikzpicture} 
\caption{A ledge that can appear in a spherical Dyck path.}
\label{fig:00}
\end{center}
\end{figure}
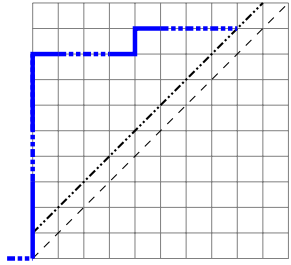
\end{Remark}

In our next lemma, we determine the complete list of non-spherical Dyck paths of size $5$.

\begin{Lemma}\label{L:exceptional}
Let $\pi\in L_{5,5}^+$. 
Let $w_\pi$ denote the permutation defined by $w_\pi:=\phi(\pi)$, where $\phi$ is the inverse of the Bandlow-Killpatrick isomorphism. 
Then $X_{w_\pi B}$ is a spherical Schubert variety if and only if 
$\pi$ is not one of the three Dyck paths that are listed in Figure~\ref{F:exceptional}.
\begin{figure}[htp]
\begin{center}
\scalebox{0.65}{
\begin{tikzpicture}
\begin{scope}[xshift=-8cm]
\draw (0,0) grid (5,5);
\draw[color=blue, line width=3pt] (0,0) -- (0,3) -- (1,3) -- (1,5) -- (5,5);
\draw[ultra thick, dashed] (0,0) -- (5,5);
\draw[ultra thick, dashed ] (0,1) -- (4,5);
\end{scope}

\begin{scope}[xshift=0cm]
\draw (0,0) grid (5,5);
\draw[color=blue, line width=3pt] (0,0) -- (0,2) -- (1,2) -- (1,4) -- (2,4) -- (2,5) -- (5,5);
\draw[ultra thick, dashed] (0,0) -- (5,5);
\draw[ultra thick, dashed ] (0,1) -- (4,5);
\end{scope}

\begin{scope}[xshift=8cm]
\draw (0,0) grid (5,5);
\draw[color=blue, line width=3pt] (0,0) -- (0,3) -- (1,3) -- (1,4) -- (2,4)
--(2,5) --(5,5);
\draw[ultra thick, dashed] (0,0) -- (5,5);
\draw[ultra thick, dashed] (0,1) -- (4,5);
\end{scope}
\end{tikzpicture}    
}
\end{center}
\caption{The non-spherical Dyck paths of size 5.}
\label{F:exceptional}
\end{figure}
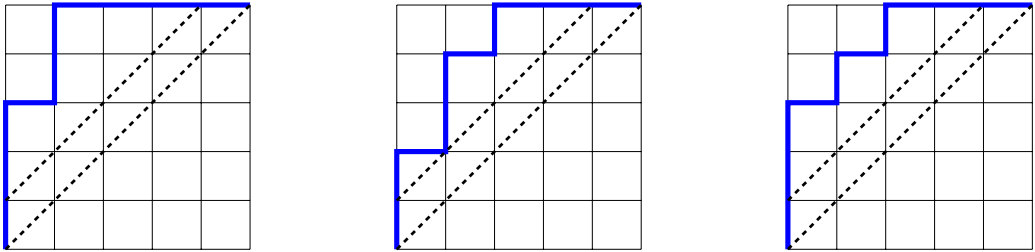
\end{Lemma}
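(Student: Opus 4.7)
The plan is to combine the pattern avoidance criterion for spherical Schubert varieties recalled in Section~\ref{S:Preliminaries} with a direct inversion of the Bandlow-Killpatrick map on finitely many permutations. By that criterion, $X_{w_\pi B}$ is spherical if and only if $w_\pi$ avoids each of the 21 length-five patterns listed in $\mathscr{P}$ in~(\ref{A:Gaetz}). Since $w_\pi$ itself lives in $\mathbf{S}_5$, containment of a length-five pattern reduces to equality, so non-sphericalness is equivalent to $w_\pi \in \mathscr{P}$.

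First I would use the hypothesis $\pi \in L_{5,5}^+$ to cut $\mathscr{P}$ down to the permutations $w_\pi$ could possibly equal. Since $\phi$ has image in $\mathbf{S}_5^{312}$, the permutation $w_\pi$ must itself avoid $312$; hence $w_\pi$ lies in $\mathscr{P}$ if and only if it equals one of the $312$-avoiding members of $\mathscr{P}$. A triple-by-triple inspection of the $21$ patterns shows that exactly three are $312$-avoiding, namely $24531$, $34521$, and $35421$. All eighteen remaining patterns exhibit an explicit $312$ occurrence; for example $42531$ contains $312$ via positions $(1,2,4)$, and $25314$ contains $312$ via positions $(2,4,5)$.

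Next I would invert the Bandlow-Killpatrick map on these three permutations by applying the algorithm described in Example~\ref{E:BK}. For each of the three Dyck paths shown in Figure~\ref{F:exceptional} I would fill the staircase region below the path with simple transpositions, read off the segments $\sigma_1,\dots,\sigma_5$ in the prescribed reading order, and evaluate the resulting reduced word as a permutation using the left-to-right convention. Performing this computation should yield the correspondence: the path $NNNENNEEEE$ (Figure~\ref{F:exceptional}, left) maps to the one-line expression $35421$; the path $NNENNENEEE$ (middle) to $24531$; and the path $NNNENENEEE$ (right) to $34521$. These are precisely the three $312$-avoiding patterns identified above, so these three Dyck paths are exactly the ones producing non-spherical Schubert varieties, completing the proof.

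The only real obstacle is bookkeeping: the left-to-right evaluation convention for reduced words described after Example~\ref{E:BK} must be applied carefully when decoding each Dyck path, and the $312$-avoidance check on the twenty-one entries of $\mathscr{P}$ must be executed without error. Both steps are short finite case analyses and require no new ingredient beyond the Bandlow-Killpatrick bijection and the pattern avoidance criterion from~\cite{Gaetz}.
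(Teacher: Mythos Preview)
Your argument is correct and follows the same overall strategy as the paper: identify the permutations attached to the three Dyck paths, observe that they lie in $\mathscr{P}$, and verify that no other Dyck path of size $5$ produces a non-spherical Schubert variety. The paper carries out the last step by saying only that it ``checked directly'' all remaining $\pi\in L_{5,5}^+$ against~(\ref{A:Gaetz}); your reduction---observing that a permutation in $\mathbf{S}_5$ contains a length-five pattern only by equaling it, and that $\phi$ lands in $\mathbf{S}_5^{312}$, so one need only list the $312$-avoiding members of $\mathscr{P}$---is a cleaner and shorter way to handle the converse, replacing a $42$-path check by a $21$-permutation filter.
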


\begin{proof}
Let $a,b$, and $c$ denote the Dyck paths that are depicted in Figure~\ref{F:exceptional} from left to right.
By direct computation, we see that the permutation corresponding to $a$ is given by 35421.
The permutation corresponding to $b$ is given by 24531. 
The permutation corresponding to $c$ is given by 34521.
All of these permutations are among the patterns in the set (\ref{A:Gaetz}).
Hence, their Schubert varieties cannot be spherical. 
We checked directly by (\ref{A:Gaetz}) that 
for every $\pi\in L_{5,5}^+ \setminus \{a,b,c\}$ the Schubert variety $X_{w_\pi B}$
is a spherical Schubert variety in the flag variety $Fl(5,\C)$. 
This finishes the proof of our assertion. 
\end{proof}

We proceed with a simple observation. 

\begin{Proposition}\label{P:factorizationuv}
Let $w\in \mathbf{S}_n$.
We assume that $w$ has a factorization of the form $w=uv$, where $u$ and $v$ are permutations such that $uv=vu$. 
Then $X_w$ is a spherical Schubert variety if and only if both of $X_u$ and $X_v$ are spherical Schubert varieties. 
\end{Proposition}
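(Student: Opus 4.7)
The strategy is to apply the pattern avoidance criterion for spherical Schubert varieties due to Gaetz (recalled in Section~\ref{S:Preliminaries}): $X_w$ is spherical if and only if $w$ avoids every permutation in the $21$-element set $\mathscr{P}$ displayed in~(\ref{A:Gaetz}). The plan is to reduce sphericality of $X_w$ to that of $X_u$ and $X_v$ by showing that a pattern from $\mathscr{P}$ appears in $w$ if and only if it already appears in one of $u$ or $v$.

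The first step is to unpack the hypothesis $uv = vu$ with $w = uv$ into a block structure. The natural setup is that $u \in \mathbf{S}_I$ and $v \in \mathbf{S}_J$ for disjoint subsets $I, J \subseteq \{s_1, \ldots, s_{n-1}\}$ with every element of $I$ commuting with every element of $J$. Under this setup, the non-fixed positions of $u$ and $v$ lie in disjoint, mutually non-adjacent blocks of $\{1, \ldots, n\}$, and $w$ admits a direct sum decomposition
\[
w = w_1 \oplus w_2 \oplus \cdots \oplus w_r,
\]
where each summand $w_i$ is a block of $u$, a block of $v$, or an identity block on the intervening fixed positions.

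With this decomposition in place, the forward direction of the proposition is immediate: any occurrence of $p \in \mathscr{P}$ in $u$ or in $v$ lifts to the same positions in $w$, so if $X_w$ is spherical then so are $X_u$ and $X_v$. For the converse, suppose $p \in \mathscr{P}$ appears in $w$ at positions $i_1 < \cdots < i_5$. If these positions spanned two distinct nontrivial blocks of $w$, the resulting one-line subword would exhibit $p$ itself as a nontrivial direct sum of shorter permutations. The crux of the argument is the finite verification that no element of $\mathscr{P}$ is a nontrivial direct sum: for each of the $21$ patterns $p$ in~(\ref{A:Gaetz}), no initial segment of length $k \in \{1, 2, 3, 4\}$ of its one-line expression has values equal to $\{1, \ldots, k\}$. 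A direct inspection of~(\ref{A:Gaetz}) confirms this sum-indecomposability. Hence the positions realizing $p$ must all lie in a single nontrivial block, which is a block of $u$ or of $v$, so $p$ appears in $u$ or in $v$.

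The main obstacle is the extraction of the block decomposition from the bare hypothesis $uv = vu$, since two commuting permutations need not have disjoint supports in general (for instance, any two powers of a common cycle commute). Making the reduction rigorous therefore requires either reading the hypothesis as placing $u$ and $v$ in commuting parabolic subgroups $\mathbf{S}_I$ and $\mathbf{S}_J$ as above, or invoking extra structure implicit in the Schubert variety context; once this is settled, the pattern-inspection step and the two-way application of Gaetz's criterion complete the proof rapidly.
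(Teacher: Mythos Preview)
Your approach is correct and genuinely different from the paper's. The paper argues geometrically: after the same ``without loss of generality'' reduction to $u\in\langle s_1,\dots,s_{r-1}\rangle$ and $v\in\langle s_r,\dots,s_{n-1}\rangle$, it observes that the standard Levi subgroup $L$ of the stabilizer of $X_w$ factors as $L_1\times L_2\subseteq \mathbf{GL}(r,\C)\times\mathbf{GL}(n-r,\C)$, and that the Borel of $L$ has an open orbit in $X_w\cong X_u\times X_v$ if and only if the Borels of the corresponding Levi subgroups have open orbits in $X_u$ and $X_v$ separately. This avoids Gaetz's theorem entirely and works directly from the definition of sphericality. Your route, by contrast, pushes everything through the pattern criterion~(\ref{A:Gaetz}) together with the finite check that each of the $21$ patterns is sum-indecomposable; this is more elementary once Gaetz's result is available, and has the pleasant feature of making transparent \emph{why} the list~(\ref{A:Gaetz}) is compatible with direct sums.

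On the hypothesis: your honesty about the gap is well placed, but note that the paper makes exactly the same leap. Its opening sentence ``Without loss of generality we may assume that $u$ (resp.\ $v$) belongs to $W_r$ (resp.\ $W_{n-r}$)'' is precisely the interpretation you flagged, namely that $u$ and $v$ sit in commuting standard parabolic subgroups rather than merely commuting as abstract permutations. So your reading of the hypothesis matches the intended one, and with that reading both proofs go through.
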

\begin{proof}
Without loss of generality we may assume that $u$ (resp. $v$) belong to the subgroup
$W_r:=\langle s_1,\dots, s_{r-1} \rangle $ (resp. the subgroup $W_{n-r}:=\langle s_r,\dots, s_{n-1} \rangle $) of $\mathbf{S}_n$.
Hence, the standard Levi subgroup $L$ of the isotropy subgroup of $X_w$ is a subgroup of 
$\mathbf{GL}(r,\C)\times \mathbf{GL}(n-r,\C)$. In particular it has a product decomposition of the form $L_1\times L_2$, where $L_1$ is a standard Levi subgroup of $\mathbf{GL}(r,\C)$ and $L_2$ is a standard Levi subgroup of $\mathbf{GL}(n-r,\C)$.
Let $L_u$ denote the standard Levi subgroup of $\mathbf{GL}(n,\C)$ consisting of block matrices of the form 
$\begin{bmatrix} A & 0 \\ 0 & B \end{bmatrix}$, where $A\in L_1$ and $B$ is an element of the maximal diagonal torus of $\mathbf{GL}_{n-r}(\C)$.
Likewise, let $L_v$ denote the standard Levi subgroup of $\mathbf{GL}(n,\C)$ consisting of block matrices of the form 
$\begin{bmatrix} A & 0 \\ 0 & B\end{bmatrix}$, where $B\in L_2$ and $A$ is an element of the maximal diagonal torus of $\mathbf{GL}(r,\C)$.
It is easy to check that $L_u$ (resp. $L_v$) is the standard Levi subgroup of the stabilizer of $X_u$ (resp. of $X_v$) in $\mathbf{GL}(n,\C)$. Notice that both of $L_u$ and $L_v$ are Levi subgroups of $L$. 
It is now evident that $L$ acts spherically on $X_w$ if and only if $L_u$ (resp. $L_v$) acts spherically on $X_u$ (resp. on $X_v$). This finishes the proof of our assertion.
\end{proof}

\begin{Lemma}\label{L:edgeorledge}
Let $w\in \mathbf{S}_n^{312}$. 
Let $\pi$ denote the Dyck path such that $\phi(\pi)=w$, where $\phi$ is the inverse of the Bandlow-Killpatrick map.
If $\pi$ is a spherical Dyck path, then $X_w$ is a spherical variety. 
\end{Lemma}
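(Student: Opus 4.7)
The plan is to decompose $\pi$ along the main diagonal and reduce to the irreducible building blocks using Proposition~\ref{P:factorizationuv}. Write $\pi = \pi_1 \star \cdots \star \pi_k$ for the decomposition into connected components at the main diagonal. Under the inverse Bandlow--Killpatrick map $\phi$, the filling of $\pi_i$ uses only simple transpositions $s_j$ with index $j$ in a consecutive set $I_i \subseteq \{1,\ldots,n-1\}$, and the $I_i$'s are pairwise disjoint. Hence $w = w_1 \cdots w_k$ is a commuting product with $w_i \in \mathbf{S}_{I_i}$, and Proposition~\ref{P:factorizationuv} reduces sphericality of $X_w$ to that of each $X_{w_i}$. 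Moreover, the defining property of a spherical Dyck path is inherited by each component, so each $\pi_i$ is either an elbow, a ledge, or a single-component path whose restriction above the first diagonal decomposes into elbows and ledges extending to the main diagonal.

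Next, I would verify the claim for the irreducible types. When $\pi_i$ is an elbow, the filling covers the full triangular region below it, so a direct computation shows $\phi(\pi_i) = w_0(I_i)$, the longest element of $\mathbf{S}_{I_i}$; the Schubert variety $X_{w_i}$ is then a complete flag variety on a smaller vector space, which is homogeneous and hence spherical. When $\pi_i$ is a ledge, the filling produces explicit segments (one longer segment at the row of the isolated North step, together with single transpositions in the rows above it) whose product I would write out in one-line notation; I would then check against the 21-pattern list $\mathscr{P}$ from the preliminaries that this permutation avoids every forbidden pattern. For the third type, I would apply the same decomposition argument one diagonal higher: stripping off the initial N-step and final E-step of $\pi_i$ leaves a structure whose components above the first diagonal yield a commuting factorization at the $\mathbf{S}_{I_i}$-level, which is handled either by induction on $n$ or by direct pattern checking. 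The small base cases are covered by Lemma~\ref{L:exceptional}, which enumerates the only three non-spherical Dyck paths of size 5 and confirms that none of them is a spherical Dyck path.

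The main obstacle lies in the ledge verification. Since a ledge produces a permutation that already contains a 321 pattern, $X_{w_i}$ sits on the boundary of singularity, and one must carefully rule out any forbidden 5-pattern such as $35421$ or $34521$ in $\mathscr{P}$. This requires tracking the exact one-line form of $\phi(\pi_i)$ from the filling: the 321 pattern comes from the three entries around the isolated N-step, while the remaining entries form an increasing sequence bracketing this pattern, which blocks any 5-element extension to a pattern in $\mathscr{P}$. The analogous check must be performed on the assembled permutation in the second case of the definition, and in particular for the initial-ledge configuration of Figure~\ref{fig:00}, whose $E$-extension lies outside the $n \times n$ grid and so must be absorbed by the initial N-step of $\pi_i$ before the pattern-avoidance check is applied.
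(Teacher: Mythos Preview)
Your reduction to connected components via Proposition~\ref{P:factorizationuv} matches the paper exactly, and your treatment of elbows (as $w_0(I_i)$, giving a flag subvariety) is correct. The ledge case is also fine in outline, and in fact the paper absorbs both the elbow and the ledge into the general ``type~3'' case rather than treating them separately.

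The genuine gap is in your third case. You write that ``stripping off the initial $N$-step and final $E$-step of $\pi_i$ leaves a structure whose components above the first diagonal yield a commuting factorization at the $\mathbf{S}_{I_i}$-level.'' This is false. If $\pi$ is connected on the main diagonal and $\pi^{(1)}$ has components $M_1,\dots,M_s$ meeting the first diagonal at $(0,1),(i_2,i_2+1),\dots,(n-1,n)$, then the boxes contributing to $w=\phi(\pi)$ still include the entire first layer of squares just above the main diagonal; these use every simple transposition $s_1,\dots,s_{n-1}$, so adjacent blocks share non-commuting generators and $w$ does \emph{not} split as a product of commuting factors. For a concrete obstruction, take $n=4$ and $\pi=NNENNEEE$: here $\pi^{(1)}$ has two elbow components, yet $w=\phi(\pi)=s_2s_3\,|\,s_2\,|\,s_1=2431$ admits no commuting factorization. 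Your proposed induction therefore has no leverage, and the fallback ``or by direct pattern checking'' is left entirely unspecified.

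The paper's argument for this case is different and does not use induction. It writes out the one-line form of $w$ explicitly: by Lemma~\ref{L:Helpful1} one has $w_n=1$, and by Lemma~\ref{L:mds} the ascents of $w$ occur precisely at the indices where $\pi^{(1)}$ touches the first diagonal. This forces $w_1\ldots w_n$ to be a concatenation of decreasing blocks, each on a consecutive set of values, followed by the entry $1$. Since $w$ avoids $312$, the only patterns from $\mathscr{P}$ that need checking are the three $312$-avoiding ones, $24531$, $34521$, $35421$; the paper rules these out by observing that every $231$ pattern in $w$ must use the last position $w_n=1$, whereas each of those three forbidden words contains a $231$ supported on its first four entries. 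That structural observation about where $231$ can occur is the missing idea in your proposal.
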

\begin{proof}
Recall that, to find $\phi(\pi)$, a reduced expression is found by reading the simple transpositions in the boxes under the path and above the main diagonal from left to right, top to bottom. 
Let $A_1,\dots, A_r$ (resp. $M_1,\dots, M_s$) denote the irreducible components of $\pi^{(0)}$ (resp. of $\pi^{(1)}$) listed from bottom to top (resp. bottom to top). 
Then each $A_i$ ($i\in \{1,\dots, r\}$) gives us a factor $u_i$, a certain product of simple transpositions, of $w$.
It is easy to see that, for every $1\leq i<j\leq r$, the factors $u_i$ and $u_j$ commute since they do not share any simple transpositions in their reduced expressions. 
This means that we can apply Proposition~\ref{P:factorizationuv} to reduce the proof of our claim to a single connected component.
Therefore, we proceed with the assumption that $\pi$ is one of the following three type of paths: 
1) a ledge, 
2) an elbow, or, 
3) a Dyck path that touches the main diagonal exactly twice and for each $i\in \{2,\dots, s\}$, the connected component $M_i$ of $\pi^{(1)}$ is an elbow, but $M_1$ is either an elbow or a ledge. 
Notice that if $\pi$ is as in 1) or 2), then it is already as in 3).
For this reason, we assume that $\pi$ is a Dyck path of the last kind. 
Let $w_1\ldots w_n$ denote the one-line expression of $w$. 
Thanks to the specific nature of $\pi$, we have a precise description of the ``ascents'' of $w$.
Here, by an {\em ascent of $w$}, we mean an index $i \in \{1,\dots,n-1\}$ such that $w_i < w_{i+1}$. 
It follows from Lemma~\ref{L:mds} that, since $\pi$ is a Dyck path of type 3, all ascents of $w$, except possibly one of them, occur at the indices where the connected components $M_2,\dots,M_s$ touch the first diagonal. 
The first connected component, that is $M_1$, gives an additional ascent if and only if it is a ledge.
Since $M_2,\dots, M_s$ are elbows, they all look like $NN\dots N EE\dots E$. 
Let us denote by $m_i$ ($i\in \{1,\dots, r\}$) the total number of $N$ steps in the connected component $M_i$ ($i\in \{1,\dots, r\}$). 
Then we have $m_1 +\cdots + m_s = n-1$. 
We will determine the one-line expression of $w$ by using the numbers $m_1,\dots, m_s$. 
But first we observe that by the connectedness of $\pi^{(0)}$ and Lemma~\ref{L:Helpful1} we know that $w_n =1$. 
Since the ascents of $w$ are determined by the lattice points where $\pi$ touches the first main diagonal, we have the following equalities of 
ordered sets: 
\begin{eqnarray*}
\{w_{m_1-1},\dots, w_1\} &=& \{2,3,\dots, m_1\}\\
\{w_{m_1+m_2-1},\dots, w_{m_1}\} &=& \{m_1+1, m_1+2,\dots, m_1+m_2\}\\
&\vdots & \\
\{w_n,\dots, w_{m_1+\cdots m_{s-1}}\} &=& \{1, m_1+\dots +m_{s-1}+1,\dots,  m_1+\dots+m_s\}.
\end{eqnarray*}
Here, by an ordered set, we mean a set whose elements are listed in an increasing order from left to right. 
It is now easy to check that the permutation $w_1\ldots w_n$ avoids the 312 pattern. 
Thus, $w$ avoids all 19 permutations listed in Figure~\ref{F:fig:3}.
In order for $X_w$ to be a spherical Schubert variety, $w$ has to avoid the remaining patterns $24531$, $34521$, and $35421$. 
Notice that in each of these patterns, the pattern 231 occurs at least twice, where one them occurs in the first four digits. 
However, in our permutation $w_1\ldots w_n$, all 231 patterns are of the form $w_i < w_j$, $w_n < w_i$, where $1\leq i<j<n$. 
This means that none of the patterns $24531$, $34521$, and $35421$ can occur in $w_1\ldots w_n$. 
Hence, we showed that if $\pi$ is a spherical Dyck path satisfying the condition 3), then the corresponding permutation $w$ avoids all 21 patterns of the theorem of Gaetz that we mentioned in the preliminaries section.
In other words, $X_w$ is a spherical Schubert variety. 
This finishes the proof of our lemma.
\end{proof}

In our next lemma, we will analyze the left descent sets of certain elements of $\mathbf{S}_n^{312}$. 
\begin{Lemma}\label{L:skippedsr}
Let $w\in \mathbf{S}_n^{312}$.
We assume that the Dyck path $\pi$ of $w$ has the property that $\pi^{(0)}=\pi$.
In other words, we assume that $\pi^{(0)}$ has only one connected component. 
If the set of lattice points where $\pi^{(1)}$ touches the first main diagonal is given by $\{(i_1,i_1+1)=(0,1),(i_2,i_2+1),\dots, (i_r,i_r+1)=(n-1,n)\}$, then the following simple reflections cannot be contained in the left descent set of $w$:
\[
s_{i_{r-1}}, s_{i_{r-2}},\dots, s_{i_{3}}.
\]
\end{Lemma}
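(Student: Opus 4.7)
My plan is to translate the left-descent condition into a positional claim about the one-line expression of $w$ and then extract it from the stack reading of the Bandlow--Killpatrick bijection $\phi$. Recall that $s_k$ is a left descent of $w$ exactly when $w^{-1}(k+1) < w^{-1}(k)$, so the goal reduces to showing that for every $j \in \{3,\ldots,r-1\}$ the value $i_j$ appears strictly to the left of the value $i_j+1$ in the one-line expression of $w = \phi(\pi)$.

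First I would realize $\phi$ via the standard stack reading of $\pi$: push the next unused integer on each $N$ step, pop the top of the stack on each $E$ step, and record the popped values in order to obtain the one-line expression of $w$. The hypothesis $\pi^{(0)}=\pi$ forces the path to avoid the main diagonal in the interior, so the integer $1$, pushed at the very first $N$ step, is pinned at the bottom of the stack until the terminal $E$ step; this recovers $w_n = 1$ of Lemma~\ref{L:Helpful1}. At every touching point $(i_j,i_j+1)$ on the $1$st diagonal the number of $N$'s executed exceeds the number of $E$'s by exactly $1$, so the stack equals $\{1\}$ at every touch, and $\pi$ decomposes cleanly into $r-1$ independent sub-loops, one between each consecutive pair of touches.

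In the $j$-th sub-loop, from $(i_j,i_j+1)$ to $(i_{j+1},i_{j+1}+1)$, the stack grows and shrinks, pushing exactly the block of integers $\{i_j+2,\dots,i_{j+1}+1\}$ and popping them all back out. Its pops occupy positions $i_j+1,\dots,i_{j+1}$ of the one-line expression, and by LIFO the last pop, landing at position $i_{j+1}$, records the first value pushed inside the sub-loop, namely $i_j+2$. This yields the key identity $w(i_{j+1}) = i_j + 2$ for every $j \in \{1,\dots,r-1\}$, which I would cross-check against the boundary data $w(n)=1$ and $w(n-1)=i_{r-1}+2$ supplied by Lemmas~\ref{L:Helpful1}--\ref{L:Helpful2}, and against the top-row datum $w(n-t+1)=n$ from Lemma~\ref{L:Helpful3}.

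Combining these identities in the case of tight consecutive touches $i_j - i_{j-1} = 1$ already pins down both values of interest: from the identity at index $j-1$ we get $w^{-1}(i_j+1) = w^{-1}(i_{j-1}+2) = i_j$, while from the identity at index $j-2$ we get $w^{-1}(i_j) = w^{-1}(i_{j-2}+2) = i_{j-1} < i_j$, which is exactly the required inequality. The main obstacle is a sub-loop of width $\ge 2$: inside such a sub-loop the internal $N/E$ pattern controls the LIFO pop order of the block $\{i_j+2,\dots,i_{j+1}+1\}$, and a priori the order can place $i_j+1$ before $i_j$. I would handle this by applying the same stack analysis recursively to each wide sub-loop, viewing it as a Dyck-like subpath above a shifted diagonal with its own touchings on the next diagonal, and by leveraging the $312$-avoidance of $w$ to exclude the pop orderings that would produce a descent at $s_{i_j}$; this recursive/inductive step is where the real work lies, while the outer framework is a straightforward bookkeeping consequence of the stack dynamics and the touching conditions.
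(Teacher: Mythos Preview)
Your very first reduction is wrong because you have not absorbed the paper's nonstandard convention for the one-line notation (spelled out right after Example~\ref{E:BK}): with the Bandlow--Killpatrick reading, ``$s_k$ is a left descent of $w$'' means $w_k>w_{k+1}$, i.e.\ a descent at \emph{position} $k$, not the standard condition $w^{-1}(k+1)<w^{-1}(k)$ on \emph{values}. Your target statement (``the value $i_j$ appears to the left of the value $i_j+1$'') is therefore not equivalent to the lemma, and in fact it is false in the paper's own running example $w=3\,2\,4\,5\,7\,9\,8\,6\,12\,11\,10\,1$ with $i_5=8$: there $w_8=6<12=w_9$, so $s_8$ is not a left descent in the paper's sense, yet the value $9$ sits at position $6$, to the left of the value $8$ at position $7$. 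This is exactly why you run into an ``obstacle'' for wide sub-loops and feel forced into a recursive argument: you are chasing a statement that simply does not hold.

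Once you correct the target to $w_{i_j}<w_{i_j+1}$, your stack analysis finishes the proof in one line and the recursion evaporates. You already observed that at each touch $(i_j,i_j+1)$ the stack is $\{1\}$ and that the $j$-th sub-loop pushes and pops precisely the block $\{i_j+2,\dots,i_{j+1}+1\}$. Hence the $i_j$-th pop (the last pop of the $(j{-}1)$-st sub-loop) outputs a value $\le i_j+1$, while the $(i_j{+}1)$-st pop (the first pop of the $j$-th sub-loop) outputs a value $\ge i_j+2$; thus $w_{i_j}<w_{i_j+1}$, regardless of the internal shape of the sub-loop. This is a perfectly good alternative to the paper's argument, which instead reads the segment factorization of the reduced word and notes that when $\pi$ passes through $(i,i+1)$ every occurrence of $s_i$ lies to the right of every occurrence of $s_{i+1}$, forcing $w_i<w_{i+1}$ directly.
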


Before we present the proof of our lemma, we give an example to convey the idea behind its proof. 

\begin{Example}
Let $\pi$ denote the Dyck path that is depicted in Figure~\ref{F:descentset}. Then we have $\pi = \pi^{(0)}$.

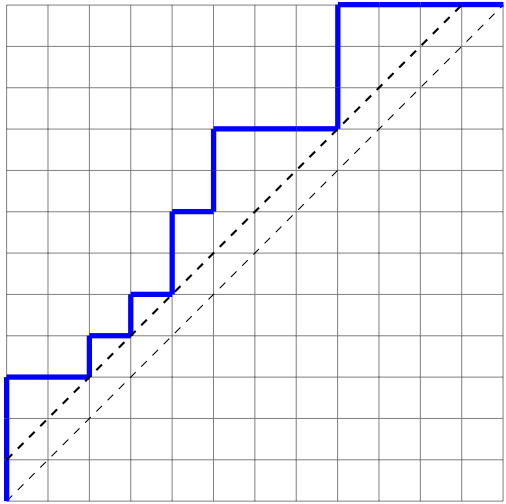
\begin{figure}[htp]
\begin{center}
 \begin{tikzpicture}[scale=0.55]
   \NEpath{0,0}{12}{12}{1,1,1,0,0,1,0,1,0,1,1,0,1,1,0,0,0,1,1,1,0,0,0,0};
\draw[thick, dashed ] (0,1) -- (11,12);
\end{tikzpicture}
\caption{The Dyck path of $w= 3\ 2\ 4\ 5\ 7\ 9\ 8\ 6\ 12\ 11\ 10\ 1$.}
\label{F:descentset}
\end{center}
\end{figure}
The list of lattice points where $\pi$ crosses the first main diagonal is 
\[
(0,1), (2,3), (3,4), (4,5), (8,9),(11,12).
\]
Then, the relevant indices are given by
\[
(i_1,i_2,\dots, i_r) = (i_1,\dots, i_6) = (0,2,3,4,8,11).
\]
A reduced expression of $w$ is given by 
\begin{align}\label{A:inverseBKmap}
w = \underbrace{s_{9} s_{10} s_{11} | s_9 s_{10} | s_9 }| \underbrace{s_6s_7s_8 | s_6s_7} | \underbrace{ s_5s_6 | s_5} | \underbrace{s_4} |\underbrace{ s_3}| \underbrace{ s_1s_2 |s_1}.
\end{align}
It is easy to verify that the left descent set of $w$ is given by $\{s_1,s_6,s_7,s_9,s_{10},s_{11}\}$.
In particular, the simple transpositions $s_3,s_4,s_8$ are excluded from the left descent set of $w$. 
Notice that in (\ref{A:inverseBKmap}) the first occurrence of $s_3$ is after $s_4$, the first occurrence of $s_4$ is after $s_5$, and the first occurrence of $s_8$ is after $s_9$. For these simple transpositions, there are no braid relations in the reduced expression (\ref{A:inverseBKmap}) that might be used for reversing their orders. 
\end{Example}

\begin{proof}[Proof of Lemma~\ref{L:skippedsr}]

Recall that, to determine the permutation $w$ from its Dyck path $\pi$, we apply the inverse of the Bandlow-Killpatrick map, building a reduced expression by reading the simple transpositions in the boxes under the path and above the main diagonal from left to right, top to bottom. 
Now, if $\pi$ passes through the lattice point $(i,i+1)$ for some $i\in \Z_+$, then in the reduced expression of $w$, $s_i$ appears after (that is to say, to the left) of all occurrences of $s_{i+1}$. 
Consequently, in the one-line expression $w_1w_2\ldots w_n$ of $w$, we have $w_i < w_{i+1}$. 
According to our convention (which follows the convention of Bandlow and Killpatrick), the multiplication by $s_i$ on the left interchanges the entries $w_i$ and $w_{i+1}$. It follows that $\ell(s_i w) > \ell(w)$. 
Therefore, the simple reflection $s_i$ cannot be a left descent for $w$. 
This finishes the proof of our assertion.
\end{proof}

The final lemma that we want to prove before proving our main theorem is about the one-line expression of $w\in \mathbf{S}_n^{312}$. 
\begin{Definition}
Let $w_1\ldots w_n$ be the one-line expression for an element $w\in \mathbf{S}_n$. 
A {\em maximal decreasing subsequence} (an {\em mds} for short) of $w_1\ldots w_n$ is a subsequence $w_i w_{i+1}\dots w_j$ with consecutive indices such that  
$w_j  < w_{j-1}< \dots < w_{i+1} < w_i$ and there is no other such subsequence of $w_1\ldots w_n$ having $w_i w_{i+1}\dots w_j$ as a proper subsequence. 
\end{Definition}

For example, for $w= 3\ 2\ 4\ 5\ 7\ 9\ 8\ 6\ 12\ 11\ 10\ 1$, the mds's are given by $3\ 2$, 4, 5, 7, $9\ 8\ 6$, and $12\ 11\ 10\ 1$.
\begin{Lemma}\label{L:mds}
Let $w_1\ldots w_n$ be the one-line expression for an element $w\in \mathbf{S}_n^{312}$. 
Let $\pi$ denote the Dyck path such that $\phi(\pi)=w$, where $\phi$ is the inverse of the Bandlow-Killpatrick map.
Then the heights of the East steps of $\pi$ encode the first entries of the mds's of $w_1\ldots w_n$.
\end{Lemma}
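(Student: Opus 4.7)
The plan is to compute $w=\phi(\pi)$ using the reading word structure of the inverse Bandlow-Killpatrick map and match the first entries of the mds's of $w$ with the distinct East step heights of $\pi$ through a left-to-right maxima argument.

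First, I would describe the reading word explicitly. Let $h(c)$ denote the height of the East step at column $c$ and set $c_{\min}(r):=\min\{c:h(c)\geq r\}$. The row segment is $\sigma_r:=s_{c_{\min}(r)+1}s_{c_{\min}(r)+2}\cdots s_{r-1}$ (empty when $c_{\min}(r)+1>r-1$), and the reading word of $\pi$ is $\sigma_n\sigma_{n-1}\cdots\sigma_1$. A direct check shows that each $\sigma_r$ acts on $\{1,\ldots,n\}$ as a cyclic shift on the interval $[c_{\min}(r)+1,r]$: it sends $c_{\min}(r)+1\mapsto r$ and $v\mapsto v-1$ for $c_{\min}(r)+1<v\leq r$, and fixes every other value. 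Let $h_1<h_2<\cdots<h_m=n$ be the distinct East step heights, with $b_j$ East steps at height $h_j$, and set $K_j:=b_1+\cdots+b_j$ and $K_0:=0$; the monotonicity of $h$ then yields $c_{\min}(r)=K_{j-1}$ for every $r\in(h_{j-1},h_j]$ (with $h_0:=0$).

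Next, I would establish by trajectory analysis two key facts for each $j$: $(i)$ $w_{K_{j-1}+1}=h_j$, and $(ii)$ $w_k\leq h_j$ for every $k\in[K_{j-1}+1,K_j]$. For $(i)$, start with $v=K_{j-1}+1$: every $\sigma_r$ with $r>h_j$ leaves $v$ unchanged because $c_{\min}(r)+1\geq K_j+1>v$; the first effective segment is $\sigma_{h_j}$, which maps $v=c_{\min}(h_j)+1$ to $h_j$; all later segments have $r<h_j$, so $v=h_j$ remains fixed. For $(ii)$, start with $v=k\leq K_j\leq h_j$: the first effective segment is again $\sigma_{h_j}$, which either jumps $v$ to $h_j$ or lowers it by $1$; every later segment has $r<h_j$, so any decrease keeps $v<h_j$, any jump lands on some $r<h_j$, and once $v$ has jumped it lies above the range of every remaining segment and is fixed. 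Thus $v\leq h_j$ throughout, yielding $w_k\leq h_j$.

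Combining $(i)$ and $(ii)$, a short induction on $j$ identifies the left-to-right maxima of $w_1\cdots w_n$ as precisely $h_1,\ldots,h_m$, appearing at positions $1,K_1+1,\ldots,K_{m-1}+1$: by induction $\max(w_1,\ldots,w_{K_{j-1}})=h_{j-1}<h_j=w_{K_{j-1}+1}$, making $h_j$ a new left-to-right maximum, while the other values in $(K_{j-1},K_j]$ are strictly less than $h_j$ by $(ii)$ and injectivity of $w$. To conclude, since $w\in\mathbf{S}_n^{312}$, the first entries of its mds's are strictly increasing: if $v_1>v_2$ were the first entries of two consecutive mds's at positions $p_1<p_2$, the last entry $w_{p_2-1}<v_2$ of the earlier mds would combine with $w_{p_1}=v_1$ and $w_{p_2}=v_2$ to produce a $312$ pattern at positions $p_1<p_2-1<p_2$. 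Hence every first entry of an mds is a left-to-right maximum, and the two lists coincide; the first entries of the mds's of $w$ are therefore the distinct East step heights $h_1,h_2,\ldots,h_m$, which is exactly what the heights of the East steps of $\pi$ encode.

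The most delicate step is $(ii)$: checking that no jump during the reading word can push $v$ above $h_j$ relies on the observation that once $v$ jumps to $r^*$ under some $\sigma_{r^*}$ with $r^*\leq h_j$, every later segment $\sigma_{r'}$ has $r'<r^*$, so $v=r^*$ sits above the range of $\sigma_{r'}$ and no further jump is possible.
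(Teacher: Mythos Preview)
Your proof is correct and rests on the same mechanism as the paper's---analysing how the row segments $\sigma_r$ act as cyclic shifts on intervals---but you carry it out in considerably more detail. The paper's argument is a short sketch: it bundles consecutive nested segments into ``groups'' and asserts, without the step-by-step tracking you provide, that evaluating a group at its initial index produces the maximum of an mds. Your trajectory facts (i) and (ii) make this precise, and you add one ingredient the paper leaves implicit: for a $312$-avoiding permutation the first entries of the mds's coincide with the left-to-right maxima, which you then identify with the distinct East-step heights $h_1<\cdots<h_m$. This extra step buys a clean, self-contained conclusion; the paper instead appeals directly (and somewhat informally) to the nested-segment structure.

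One minor indexing wrinkle worth noting: under the paper's filling convention the box in column $c$ carries $s_c$, so the row-$r$ segment is $\sigma_r=s_{c_{\min}(r)}\cdots s_{r-1}$ with $c_{\min}(r)=K_{j-1}+1$, rather than $\sigma_r=s_{c_{\min}(r)+1}\cdots s_{r-1}$ with $c_{\min}(r)=K_{j-1}$ as you wrote. The two off-by-one shifts cancel, so the interval $[K_{j-1}+1,r]$ on which $\sigma_r$ acts as a cycle---and hence all of (i), (ii), and the left-to-right maxima identification---is correct as stated.
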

\begin{proof}
The proof will follow from the fact that $\phi(\pi)$ is a product of segments, which are products of simple transpositions whose indices increase from left to right, and $w_1\ldots w_n$ is obtained by evaluating this product of segments on the left.

First we organize consecutive segments in $\phi(\pi)$ into groups, where each group consists of segments which are nested
(see the example (\ref{A:inverseBKmap}), where we indicated these groups by using under-braces).
If a segment group begins with $s_k$ (for some $k\in [n-1]$), then the evaluation of $k$ at this segment group gives a number that is strictly greater than $k$. 
If $k$ is the index of a simple transposition that appears in the middle of a segment, then the evaluation of $k$ in this group gives a number that is less than $k$. 
In other words, by evaluating a segment group that starts with $s_k$ at $k$, we obtained the maximum element of an mds in $w_1\ldots w_n$. 
Furthermore, since the indices of the initial terms of each segment block form a decreasing sequence (because of the reading order from left to right, top to bottom)
we see that the each initial index in a segment corresponds to the maximal entry of an mds in $w_1\ldots w_n$. 
This finishes the proof of our assertion.
\end{proof}

We are now ready to close our paper by proving the last theorem of our paper.
We state it here once more for the convenience of the reader.

\begin{Theorem}\label{T:sphericalDyckpaths}
Let $w\in \mathbf{S}_n^{312}$. 
Let $\pi$ denote the Dyck path of size $n$ corresponding to $w$.
Then $X_w$ is a spherical Schubert variety if and only if $\pi$ is a spherical Dyck path. 
\end{Theorem}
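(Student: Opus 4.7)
The plan is to verify the two implications separately, using the pattern avoidance criterion of Gaetz as the bridge between the combinatorics of $\pi$ and the geometry of $X_w$. Since every $w\in \mathbf{S}_n^{312}$ automatically avoids both $3412$ and $4231$ (each of those patterns contains a $312$), the Schubert variety $X_w$ is always smooth by Lakshmibai-Sandhya. A direct inspection of the list~(\ref{A:Gaetz}) reveals that every element of $\mathscr{P}$ other than $24531$, $34521$, and $35421$ itself contains a $312$ sub-pattern, so for $w\in \mathbf{S}_n^{312}$ sphericity of $X_w$ is equivalent to the avoidance of just those three patterns. By Lemma~\ref{L:exceptional}, these three permutations correspond under $\phi$ exactly to the three non-spherical Dyck paths of size~$5$ shown in Figure~\ref{F:exceptional}. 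Thus the theorem reduces to the combinatorial statement that $\pi$ is a spherical Dyck path if and only if $\phi(\pi)$ avoids each of $24531$, $34521$, and $35421$.

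For the ``if'' direction, I would first decompose $\pi$ into its connected components $A_1,\dots,A_r$ along the main diagonal; under $\phi$ this produces $w=u_1\cdots u_r$ with pairwise commuting factors (they use disjoint simple transpositions), so Proposition~\ref{P:factorizationuv} reduces sphericity to the case of a single connected component. Lemma~\ref{L:edgeorledge} already handles the case where this single component is an elbow, a ledge, or a Dyck path whose level-$1$ components are all elbows possibly preceded by a leading ledge. The remaining situation permitted by Definition~\ref{D:connectedcomponent} occurs when some middle component of $\pi^{(1)}$ is itself a ledge extending down to the main diagonal; but such a ledge, by definition, touches the main diagonal and hence further splits the ambient component at that diagonal point. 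Iterating Proposition~\ref{P:factorizationuv} and applying Lemma~\ref{L:edgeorledge} to the resulting pieces finishes this case.

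For the ``only if'' direction, I would argue by contrapositive. Suppose $\pi$ is not a spherical Dyck path: then some connected component $\tau$ of $\pi^{(0)}$ is neither an elbow nor a ledge, and some connected component $\sigma$ of $\tau^{(1)}$ is neither an elbow nor a ledge extending to the main diagonal. The aim is to locate inside $\pi$ a sub-configuration isomorphic to one of the three bad Dyck paths of Figure~\ref{F:exceptional} and then transport it into a genuine five-letter pattern of $w$. I would split into two cases based on the geometry of $\sigma$. If $\sigma$ has a peak at least two rows above the first diagonal of $\tau$ (equivalently, $\pi$ has a peak on the third or higher diagonal of $\tau$), then Lemma~\ref{L:mds} applied to the East-step heights yields entries of $w$ forming a $321$, which combined with a suitably placed ascent extracted via Lemma~\ref{L:Helpful3} produces a $34521$ or $35421$ pattern. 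If instead $\sigma$ is a ledge whose isolated North step does not descend to the main diagonal of $\pi$, Lemmas~\ref{L:Helpful1} and~\ref{L:Helpful2} supply the small entries $w_n$ and $w_{n-1}$, which combined with two entries lying strictly above $\sigma$ in $\tau$ produce a $24531$ pattern.

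The main obstacle will be the pattern extraction in the ``only if'' direction: the offending substructure $\sigma$ can sit deep inside $\pi$ with many East steps separating it from the main diagonal, so identifying the precise five positions of $w$ realizing the forbidden pattern requires careful bookkeeping. The Bandlow-Killpatrick description of $\phi$, together with Lemma~\ref{L:skippedsr} for the excluded left descents and Lemma~\ref{L:mds} for the maximal decreasing subsequences, provides all the combinatorial data needed to extract these positions directly from the coordinates of the peaks and dips of $\pi$ near $\sigma$. Once the case analysis is complete, the pattern avoidance criterion~(\ref{A:Gaetz}) immediately yields the non-sphericity of $X_w$, completing the proof.
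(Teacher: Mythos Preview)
Your reduction of sphericity for $w\in \mathbf{S}_n^{312}$ to avoidance of exactly the three patterns $\{24531, 34521, 35421\}$ is correct and useful, and your ``if'' direction coincides with the paper's: Lemma~\ref{L:edgeorledge} already covers every spherical Dyck path once Proposition~\ref{P:factorizationuv} has split $\pi$ into its $\pi^{(0)}$-components. Your extra remark about a middle ledge of $\pi^{(1)}$ forcing a splitting of the ambient $\pi^{(0)}$-component is correct but superfluous, since case~(3) of that lemma already absorbs this.

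Where you genuinely diverge is in the ``only if'' direction. The paper does \emph{not} extract a forbidden five-letter pattern in general. Instead it runs an induction on $n$ and brings in the algebraic characterization $w=w_0(J(w))\,c$ (with $c$ a Coxeter element) from \cite{CanSaha,GHY2024}: after reducing to a connected $\pi^{(0)}$, it strips off an elbow component of $\pi^{(1)}$ by left-multiplying with the corresponding $w_0(I_1)$, verifies that the quotient is still of the required Coxeter form, and invokes the inductive hypothesis; only the residual case $s=1$ is handled by a direct $34521$-extraction. Your plan stays entirely inside the Bandlow--Killpatrick combinatorics and never touches the Coxeter-element criterion, which would be a real simplification if carried out.

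The gap is that your case split does not yet do the job. Your ``case~1'' (``$\sigma$ has a peak on the third or higher diagonal'') is satisfied by every elbow of size $\geq 2$ in $\pi^{(1)}$, so it does not isolate the bad $\sigma$'s; and cases~1 and~2 are neither disjoint nor visibly exhaustive. What you actually need (for connected $\pi^{(0)}$) is: either some $M_i$ with $i\geq 2$ is not an elbow, or $M_1$ is neither an elbow nor a ledge. The first alternative must be shown to produce a $24531$ or $34521$ pattern by combining the tail entries from Lemmas~\ref{L:Helpful1}--\ref{L:Helpful2} with an ascent located via Lemma~\ref{L:mds}; the second alternative is essentially the paper's $s=1$ computation yielding $34521$. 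You have correctly identified the ingredients, but the actual coordinate bookkeeping for the first alternative---finding the five positions when the offending component sits strictly above the bottom of $\pi^{(1)}$---is the substantive missing step, and is not covered by any of the cited lemmas as stated.
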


\begin{proof}
$(\Leftarrow)$ This implication follows from Lemma~\ref{L:edgeorledge}. 
\medskip

$(\Rightarrow)$ We prove our claim by using induction.
The base case is when $n=5$. 
In this case, our claim follows from Lemma~\ref{L:exceptional}. 
We proceed with the assumption that our claim holds true for every $v\in \mathbf{S}_{n-1}^{312}$ such that $X_v$ is a spherical Schubert variety in $Fl(n-1,\C)$. 
Let $w\in \mathbf{S}_n^{312}$ be a permutation such that $X_w$ is a spherical Schubert variety in $Fl(n,\C)$.
We now notice a simple reduction argument.
Let $\pi:=\pi_w$ denote the Dyck path of $w$. 
If $w$ has a factorization of the form $w=uv$, where $u$ and $v$ are (nontrivial) permutations that commute with each other, then $\pi^{(0)}$ has at least two connected components whose words begin with $NN\ldots$. 
Conversely, if $\pi^{(0)}$ has more than one connected components, where at least two of them begins with $NN\ldots$, then 
the corresponding permutation has a factorization into commuting permutations.  
Thus, in light of Proposition~\ref{P:factorizationuv}, thanks to our induction hypothesis, it suffices to focus on the permutations such that $\pi_w^{(0)}$ has a single connected component. 
Therefore, we proceed with the assumption that $\pi = \pi^{(0)}$.
Let us assume that $\pi$ is not a spherical Dyck path.
(Otherwise, there is nothing to prove.) 
Note that, since we have $\pi = \pi^{(0)}$, the Dyck path system $\pi^{(1)}$ is in fact a single Dyck path of size $n-1$.

Recall from~\cite{CanSaha} (or from~\cite{GHY2024}) that $X_w$ is a spherical Schubert variety if and only if $w$ is of the form $w= w_0(J(w)) c$, where $J(w)$ is the left descent set of $w$. 
Therefore, $w$ might be a Coxeter element of some parabolic subgroup of $\mathbf{S}_n$. 
Let us assume that this is the case. 
Then every reduced expression for $w$ has no repeated factors in it. 
In this case, the Dyck path of $w$ has no segments that have more than one simple transposition in it. 
This is equivalent to the statement that the connected components of $\pi^{(2)}$ are isolated lattice points. 
Then we see that $\pi$ has the form $NNENEN\cdots ENEE$.
It follows that every connected component of $\pi^{(1)}$ is an elbow.
In this case, we see that $\pi$ is a spherical Dyck path. 
Now let us assume that $w$ is $w_0(J(w))$, that is, $c= id$. 
In this case, it is easy to check that $\pi$ is an elbow. 
Hence, it is a spherical Dyck path. 
\medskip

We proceed with the assumption that $w$ is neither a standard Coxeter element of any parabolic subgroup of $W$ nor the longest element of $J(w)$. Thus, $w= w_0(J(w))c$, where $w_0(J(w))\neq id$ and $c\neq id$. 
We list the connected components of $\pi^{(1)}$ from bottom to top as in $M_1,\dots, M_s$. 
Let us first assume that $s >1$ and that $M_1$ is an elbow of size $k \geq 1$. 
By applying the inverse of the Bandlow-Killpatrick map, we see that a reduced expression of $w$ starts with a reduced expression of the longest element of the parabolic subgroup of $\mathbf{S}_n$ generated by $s_{n-1},s_{n-2},\dots, s_{n-k}$.
Furthermore, in this case, by Lemma~\ref{L:skippedsr}, we know that the simple reflection $s_{n-k-1}$ is not in the left descent set of $w$.
This means that $w_0(J(w))$ is a product of the form $w_0(I_1)w_0(I_2)$, where $I_1= \{s_{n-1},s_{n-2},\dots, s_{n-k}\}$ and $I_2 = J(w) \setminus I_1$. Since $w$ is given by $w_0(J(w))c$, where $c$ is a Coxeter element of some parabolic subgroup of $W$,
we see that $w_0(I_1)w = w_0(I_2)c$. 
In other words, the element $w_0(I_1)w \in W$ is a spherical element, that is to say, $X_{w_0(I_1)w}$ is a spherical Schubert variety in $Fl(n,\C)$. 
At the same time, $w_0(I_1)w$ is an element of $\mathbf{S}_n^{312}$.
This follows from the fact that the reduced expression of $w_0(I_1)w$ is obtained from the reduced expression of $w$ by deleting
the simple reflections that corresponds to the lattice points under the path $M_1$ above the main diagonal. 
Now, since a reduced expression of $w_0(I_1)w$ is a product of the first $n-k-1$ simple reflections, the corresponding Schubert variety is isomorphic to a Schubert variety in $Fl(n-k,\C)$.
Then it follows from our induction hypothesis that the corresponding Dyck path $\pi'$ in $L_{n-k,n-k}^+$ is a spherical Dyck path.
But the Dyck path $\pi$ is obtained from $\pi'$ by appending the elbow of size $k$ to it. 
Hence, $\pi$ is a spherical Dyck path as well. 
\medskip

We now proceed with the assumption that $s>1$ but $M_1$ is not an elbow. 
We will use an argument that is very similar to the one we had in the previous paragraph. 
By applying the inverse of the Bandlow-Killpatrick map, we see that a reduced expression of $w$ starts with a reduced expression of an element $u$ of the parabolic subgroup of $\mathbf{S}_n$ generated by the elements of $I_1:=\{ s_{n-1},s_{n-2},\dots, s_{n-k}\}$.
By Lemma~\ref{L:skippedsr}, we know that the simple reflection $s_{n-k-1}$ is not in the left descent set of $w$.
This means that $\ell(u^{-1}w)=\ell(w) - \ell(u^{-1})$. 
Since every simple reflection that appears in the reduced expression of $u$ is contained in $J(w)$ as well,
we see that $\ell(u^{-1}w_0(J(w))) = \ell(w_0(J(w)))-\ell(u^{-1})$. 
Let $v$ denote $u^{-1}w_0(J(w))$. 
Then the reduced expression of $vc$ is an element of the parabolic subgroup generated by $s_1,\dots, s_{n-k-1}$.
In particular, we see that the reduced expression of $c$ does not contain any simple reflections from $I_1$. 
Likewise, the reduced expression of $v$ does not contain any simple reflections from $I_1$. 
Since $s_{n-k-1}$ is not in $J(w)$, we see that $u$ and $v$ commute with each other. 
In other words, the longest permutation $w_0(J(w))$ is a product of two commuting elements $u$ and $v$ which do not have any common simple reflection in their reduced expressions. This means that $u$ and $v$ are separately the longest elements of the Young subgroups generated by the elements of $J(w) \cap I_1$ and $J(w)\setminus I_1$, respectively. 
However, this contradicts with our assumption that $M_1$ is not an elbow. 
\medskip

It remains to show that, if $s=1$, that is to say $M_1$ is the only connected component of $\pi^{(1)}$, then $\pi$ is a spherical Dyck path. 
Towards a contradiction, we assume that $\pi$ is not a spherical Dyck path. 
We will analyze the one-line expression $w_1\ldots w_n$ of $w$. 
Under our assumptions, Lemma~\ref{L:Helpful1} implies that $w_n$ is $1$.
Likewise, Lemma~\ref{L:Helpful2} implies that $w_{n-1} = 2$. 
Thus, our assumptions imply that the number $t$ of East steps of $\pi$ on the line $y=n$ is at least 3, at most $n-2$.
In terms of one-line expression, this means that 
\[
w_{n-2} > w_{n-1}=2 > w_n = 1.
\]
Now, our assumptions imply also that $\pi$ is neither an elbow nor a ledge.
Therefore, its word will have at least two occurrences of the consecutive pattern $ENE$.
Then, by Lemma~\ref{L:mds}, the height of the second East step in each of these $ENE$'s corresponds to the first entry of an mds in $w_1\ldots w_n$. 
It follows that if we denote by $k\in [n]$ the index such that $w_k = n$, then the second $E$ of the last $ENE$ is the $k$-th East step in $\pi$. 
The two $E$'s in the preceding $ENE$ pattern correspond to the entries $w_i$ and $w_{i+1}$ of $w_1\ldots w_n$ (for some $i\in \{1,\dots, k-2\}$)
and they are ordered by $w_i < w_{i+1}$. 
(This follows from the fact that the second East step of $ENE$ is the maximal entry of the corresponding mds,
and the first East step of $ENE$ is the lowest entry of the previous mds.)
Then, we see the existence of the following patterns in $w_1\ldots w_n$: 
\[
w_i < w_{i+1} < w_k = n \quad\text{and}\quad w_{n-1}=2 > w_n=1. 
\]
But this is precisely an occurrence of the 34521 pattern in $w_1\ldots w_n$. 
Hence, by the pattern avoidance characterization of Schubert varieties, $X_w$ is not a spherical Schubert variety. 
This contradicts our initial assumption.
\end{proof}

We close our paper by mentioning our future work.
From a representation theory viewpoint, isolating combinatorially significant subfamilies among Schubert varieties corresponds to distinguishing certain Demazure modules based on their combinatorial properties.
In an upcoming paper, various combinatorial representation-theoretic properties of nearly toric Schubert varieties will be presented. 

\section*{Conflict of interest}

The authors have no conflicts of interest to declare that are relevant to the content of this article.

\section*{Acknowledgement}
Th authors gratefully acknowledges partial support from the Louisiana Board of Regents grant, contract no. LEQSF(2023-25)-RD-A-21.

\bibliography{References}
\bibliographystyle{plain}

\end{document}